\numberwithin{equation}{section}
\newtheorem{theorem}{Theorem}[section]
\newtheorem{lemma}[theorem]{Lemma}
\newtheorem{proposition}[theorem]{Proposition}
\theoremstyle{definition}
\newtheorem{definition}[theorem]{Definition}
\newtheorem{remark}[theorem]{Remark}
\newcommand{\ot}{\otimes}
\newcommand{\C}{\mathcal{C}}
\newcommand{\one}{\textbf{1}}
\newcommand{\wb}{\overline}
\newcommand{\Z}{\mathbb{Z}}
\newcommand{\Ga}{\Gamma}
\newcommand{\YD}{\mathcal{YD}}
\newcommand{\GGYD}{_G^G\YD}
\newcommand{\B}{\text{B}}
\newcommand{\Ow}{\mathcal{O}}
\newcommand{\A}{\mathbb{A}}
\newcommand{\lt}{\leadsto}
\newcommand{\Gg}{\mathbb{G}}
\newcommand{\Ker}{\text{Ker}}
\newcommand{\Hom}{\text{Hom}}
\newcommand{\Aut}{\text{Aut}}
\newcommand{\GL}{\text{GL}}
\newcommand{\Pp}{\text{P}}
\newcommand{\la}{\lambda}
\renewcommand{\Im}{\text{Im}}
\begin{document}
\title{Geometric perspective on Nichols algebras}
\author{Ehud Meir}
\address{Institute of Mathematics, University of Aberdeen, Fraser Noble Building, Aberdeen AB24 3UE, UK}
  \email{meirehud@gmail.com}

\maketitle
\begin{abstract} 
We formulate the generation of finite dimensional pointed Hopf algebras by group-like elements and skew-primitives in geometric terms.
This is done through a more general study of connected and coconnected Hopf algebras inside a braided fusion category $\C$. 
We describe such Hopf algebras as orbits for the action of a reductive group on an affine variety. We then show that the closed orbits are precisely the orbits of Nichols algebras, and that all other algebras are therefore deformations of Nichols algebras. 
For the case where the category $\C$ is the category $\GGYD$ of Yetter-Drinfeld modules over a finite group $G$, this reduces the question of generation by group-like elements and skew-primitives to a geometric question about rigidity of orbits. 
Comparing the results of Angiono Kochetov and Mastnak, this gives a new proof for the generation of finite dimensional pointed Hopf algebras with abelian groups of group-like elements by skew-primitives and group-like elements.
We show that if $V$ is a simple object in $\C$ and $\B(V)$ is finite dimensional, then $\B(V)$ must be rigid. 
We also show that a non-rigid Nichols algebra can always be deformed to a pre-Nichols algebra or a post-Nichols algebra which is isomorphic to the Nichols algebra as an object of the category $\C$. 
\end{abstract}

\begin{section}{Introduction}
One of the fundamental problems in the theory of finite dimensional pointed Hopf algebras is to determine if such algebras are generated by group-like elements and skew-primitives. This aims to generalize the following classical classification result of Cartier, Milnor, Moore, and Kostant from the 1960s:
\begin{theorem}[Cartier-Milnor-Moore-Kostant,60s]
Let $H$ be a cocommutative Hopf algebra over an algebraically closed field $K$ of characteristic zero.
Then $H$ is the crossed direct product of a group algebra with the universal enveloping algebra of a Lie algebra.
In particular, $H$ is generated by group-like elements and primitive elements.
\end{theorem}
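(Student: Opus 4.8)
\emph{Proof proposal.} The plan is to recover the classical Cartier--Milnor--Moore--Kostant structure by combining the structure theory of pointed cocommutative coalgebras with a Milnor--Moore argument for the connected component of the identity. Write $G = G(H)$ for the group of group-like elements and $\mathfrak{g} = P(H)$ for the Lie algebra of primitives, with bracket given by the commutator. First I would analyse the coradical $H_0$: since $H$ is cocommutative, $H_0$ is a cosemisimple cocommutative coalgebra, so its dual is a commutative semisimple algebra, which over the algebraically closed field $K$ is a product of copies of $K$; hence $H_0$ is pointed, $H_0 = \bigoplus_{g \in G} K g$, and therefore $H$ itself is pointed. The structure theorem for pointed cocommutative coalgebras then gives a coalgebra decomposition $H = \bigoplus_{g \in G} H_{(g)}$, where $H_{(g)}$ is the irreducible component of $H$ containing $g$, and right translation by $g$ identifies $H_{(g)}$ with the irreducible component $H_{(1)}$ of the identity.

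Next I would check that $H_{(1)}$ is a connected cocommutative Hopf subalgebra: it is a subcoalgebra by construction, and since the tensor product of pointed irreducible coalgebras is again pointed irreducible, the images of $H_{(1)} \otimes H_{(1)}$ under multiplication and of $H_{(1)}$ under the antipode are irreducible subcoalgebras containing $1$, hence lie in $H_{(1)}$. The same decomposition shows that multiplication gives a coalgebra isomorphism $H_{(1)} \otimes K G \xrightarrow{\sim} H$, because $H_{(1)} g = H_{(g)}$. Finally, $H_{(1)}$ is normal: conjugation by any $g \in G$ is a Hopf algebra automorphism fixing $1$, hence preserves the irreducible component of $1$, and more generally the adjoint action of $H$ preserves $H_{(1)}$.

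The heart of the argument is the Milnor--Moore step: a connected cocommutative Hopf algebra over a field of characteristic zero is the universal enveloping algebra of its Lie algebra of primitives. I would prove this by passing to the associated graded $\operatorname{gr} H_{(1)}$ with respect to the coradical filtration; this is a graded connected cocommutative Hopf algebra, and a short argument, using that in characteristic zero a coalgebra map out of a connected coalgebra is determined by its restriction to primitives together with cocommutativity, shows that $\operatorname{gr} H_{(1)}$ is primitively generated and that the canonical map from the symmetric algebra $S\!\left(P(\operatorname{gr} H_{(1)})\right)$ is an isomorphism. Lifting this, the canonical Hopf algebra map $U(\mathfrak{g}) = U\!\left(P(H_{(1)})\right) \to H_{(1)}$ carries the PBW filtration to the coradical filtration, is an isomorphism on associated gradeds, and is therefore an isomorphism. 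Characteristic zero enters crucially here, ruling out the $p$-th-power and divided-power phenomena that obstruct PBW in positive characteristic.

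Combining the pieces: $G$ acts on $H_{(1)} = U(\mathfrak{g})$ by conjugation, hence on $\mathfrak{g}$, and the multiplication map $U(\mathfrak{g}) \otimes K G \to H$ is simultaneously a coalgebra isomorphism (previous step) and an algebra map onto the smash product $U(\mathfrak{g}) \# K G$ for this action; so it is an isomorphism of Hopf algebras $H \cong U(\mathfrak{g}) \rtimes K G$, which is the asserted crossed direct product. Since $U(\mathfrak{g})$ is generated as an algebra by $\mathfrak{g} = P(H)$ and $K G$ by $G = G(H)$, the Hopf algebra $H$ is generated by its primitive and group-like elements. I expect the main obstacle to be the Milnor--Moore step, specifically showing that the associated graded of the coradical filtration of a connected cocommutative Hopf algebra is primitively generated and identifying it with a symmetric algebra, where the characteristic-zero hypothesis is indispensable; by contrast, the coalgebra structure theory and the reduction to $H_{(1)}$ are comparatively routine.
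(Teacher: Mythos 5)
This theorem is quoted in the paper purely as classical background from the 1960s; the paper gives no proof of it, so there is nothing internal to compare your argument against. Judged on its own, your proposal is the standard and correct route: pointedness of a cocommutative Hopf algebra over an algebraically closed field (each simple subcoalgebra has a commutative semisimple, hence split, dual and is therefore one-dimensional), the decomposition of a pointed cocommutative coalgebra into irreducible components permuted simply transitively by right translation by group-likes, the identification of the multiplication map $H_{(1)}\otimes KG\to H$ as a coalgebra isomorphism onto the smash product, and finally Milnor--Moore for the connected piece. You also correctly locate the real work in the last step.

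The only place where your sketch elides something that genuinely needs proving is inside that Milnor--Moore step. To run the associated-graded argument you need to know that $\operatorname{gr}H_{(1)}$ (for the coradical filtration) is not just cocommutative but also \emph{commutative}; this follows from the lemma that in a connected Hopf algebra the commutator satisfies $[H_m,H_n]\subseteq H_{m+n-1}$, and it is what lets you invoke the Hopf--Leray--Borel structure theorem to conclude $\operatorname{gr}H_{(1)}\cong S\bigl(P(\operatorname{gr}H_{(1)})\bigr)$ in characteristic zero. You should also say a word about why primitives do not get lost in passing to the associated graded, i.e.\ why $P(H_{(1)})\to P(\operatorname{gr}H_{(1)})_1$ is onto, which is what makes the lifted map $U(\mathfrak{g})\to H_{(1)}$ surjective; the injectivity then comes from comparing with the PBW filtration as you indicate. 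These are standard lemmas (Sweedler, Montgomery), not gaps in the strategy, but they are exactly the points where characteristic zero and connectedness are consumed, so a complete write-up should make them explicit.
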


This problem was studied thoroughly in case the group of group-like elements in the Hopf algebra is abelian and the ground field has characteristic zero.  
In \cite{AS10} Andruskiewitsch and Schneider proved that such a Hopf algebra must be generated by group-like elements and skew-primitives, and gave a complete classification of such algebras in case the group of group-like elements does not have prime divisors which are $\leq 7$. This was done by the \textit{lifting method} and by a deep study of the structure of the possible \textit{Nichols algebras} arising in the category of Yetter-Drinfeld modules over an abelian group. The Nichols algebras correspond to the universal enveloping algebras in the above theorem. In \cite{Heckenberger} Heckenberger classified all Yetter-Drinfeld modules $V$ for which the Nichols algebra $\B(V)$ is finite dimensional, as part of a wider classification of Nichols algebras with finite root systems. In \cite{A1} Angiono described these Nichols algebras explicitly in terms of generators and relations and proved that all finite dimensional connected graded Hopf algebras in the category of Yetter-Drinfeld modules over a finite abelian group are Nichols algebras. 
Using the above results Angiono and Garcia-Igelsias gave in \cite{AG} a complete classification of finite dimensional pointed Hopf algebras with abelian groups of group-like elements. For more classification results, including the case where the group of group-like elements is non-abelian, see \cite{AnSa19} and the survey \cite{And14}.

The starting point of these classification results is the following: let $G$ be a finite group and let $H$ be a finite dimensional pointed Hopf algebra over an algebraically closed field $K$ of characteristic zero, whose group of group-like elements is isomorphic to $G$. The fact that $H$ is pointed implies that the \textit{coradical filtration} $H_0=KG\subseteq H_1\subseteq\cdots\subseteq H_n=H$ of $H$ is a Hopf algebra filtration. This implies that $\text{gr}H:=\oplus_{i=1}^{n} H_i/H_{i-1}$ is a graded Hopf algebra. Moreover, if $\text{gr}H$ is generated by group-like elements and skew-primitives then $H$ is generated by group-likes and skew-primitives as well.
The inclusion $H_0\to \text{gr}H$ splits, and by a result of Radford we can write $\text{gr}H \cong R\# KG$ where $R$ is a graded Hopf algebra in the category of Yetter-Drinfeld modules over $G$, $\GGYD$. 
The comultiplication of $R$ as a Hopf algebra in $\GGYD$ is different from the comultiplication of elements of $R$ in the Hopf algebra $H$. The skew-primitive elements become primitive elements in $R$. The original question then boils down to whether or not $R$ is generated by primitive elements, and not just skew-primitives.

In \cite{AS10,AS00} Andruskiewitsch and Schneider studied both the Hopf algebra $R$ and the dual Hopf algebra $R^*$, proved that finite dimensionality implies that certain relations among the elements of these Hopf algebras must hold, and concluded that both $R$ and $R^*$ are generated by primitive elements. This means that the algebra $R$ is in fact the \textit{Nichols algebra} $\B(V)$ where $V=\Pp(R)$ is the set of primitive elements in $R$. 

Andruskiewitsch and Schneider then also address the questions of the reconstruction of the original algebra $H$ out of $\text{gr}H$, and for what objects $V$ of $\GGYD$ the algebra $\B(V)$ is finite dimensional. 
The key-point in proving that any finite dimensional pointed Hopf algebra is generated by group-like elements and skew-primitives is to prove that all Hopf algebras $R$ in $\GGYD$ arising from the above construction are Nichols algebras.

In this paper we study a more general problem by using a different, geometric, method. 
Instead of looking at the category $\GGYD$ we look at a general braided fusion category $\C$. 
For every object $B$ in $\C$ we will construct an affine variety $X_B$, whose points represent the structure constants of connected coconnected Hopf algebras (these notations will be explained in Sections \ref{sec:preliminaries} and \ref{sec:variety}). The group $\Ga_B:=\Aut_{\C}(B)$ acts on $X_B$, and the orbits correspond to isomorphism types of Hopf algebras.  We will prove the following:
\begin{theorem}\label{thm:main1}
Let $R$ be a connected coconnected finite dimensional Hopf algebra in $\C$ such that $R\cong B$ as objects of $\C$. The orbit $\Ow_R$ of $R\in X_B$ is closed if and only if the algebra $R$ is isomorphic to a Nichols algebras.
In particular, all the orbits of $\Ga_B$ in $X_B$ are closed if and only if all the connected and coconnected Hopf algebras in $\C$ that are isomorphic to $B$ as objects of $\C$ are Nichols algebras.
\end{theorem}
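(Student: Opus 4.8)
The plan is to play two facts off against each other: a \emph{universal degeneration to the coradical grading} available at every point of $X_B$, and the Geometric Invariant Theory principle that the closure of an orbit of a reductive group on an affine variety contains a unique closed orbit, which is reached by a one-parameter degeneration (Hilbert--Mumford, Kempf). Here $\Ga_B=\Aut_\C(B)$ is reductive because $\C$, being fusion, is semisimple, so $\End_\C(B)$ is a product of matrix algebras and $\Ga_B$ is its group of units. First I would set up the degeneration: for $R\in X_B$ split the coradical filtration $\one=C_0\subseteq\cdots\subseteq C_N=B$ (possible by semisimplicity) and use the one-parameter subgroup $\la(t)\in\Ga_B$ acting by $t^{-i}$ on the $i$-th graded piece. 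Since the coradical filtration of a Hopf algebra is a Hopf algebra filtration ($C_iC_j\subseteq C_{i+j}$, $\Delta(C_n)\subseteq\sum_{i+j=n}C_i\ot C_j$), the transported structure constants have no pole at $t=0$ and $\lim_{t\to 0}\la(t)\cdot R=\mathrm{gr}^{\mathrm{cor}}R\in\overline{\Ow_R}$, which is coradically graded, connected, and (being finite dimensional and graded) coconnected, hence a point of $X_B$. Consequently, if $\Ow_R$ is closed we may assume $R$ is coradically graded, and then — by the standard characterization of Nichols algebras ($\B(W)$ is the unique graded connected Hopf algebra generated in degree one with primitives in degree one, and a coradically graded connected Hopf algebra has its primitives in degree one) — $R\cong\B(R_1)$ exactly when $R$ is generated in degree one. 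So the theorem reduces to showing, for coradically graded $R$ with $V:=R_1$, that $\Ow_R$ is closed iff $R=\langle V\rangle$.

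For ``Nichols $\Rightarrow$ closed'' I would let $Z$ represent the unique closed orbit in $\overline{\Ow_{\B(V)}}$; by Kempf $Z=\lim_{t\to 0}\mu(t)\cdot\B(V)$ for a one-parameter subgroup $\mu$, and by the first paragraph we may take $Z$ coradically graded. The subobject $V_0:=\lim_{t\to0}\mu(t)\cdot V$ of $B$ consists of primitive elements of $Z$ (primitivity of a subobject is a closed condition) and satisfies $V_0\cong V$ in $\C$ (the isomorphism type of a subobject is constant in a family, by semisimplicity). Since $Z$ is coradically graded, $\langle Z_1\rangle=\B(Z_1)\subseteq Z$ and $\langle V_0\rangle=\B(V_0)\subseteq\B(Z_1)$, so that
\[
\dim B=\dim\B(V)=\dim\B(V_0)\le\dim\B(Z_1)\le\dim Z=\dim B
\]
is a chain of equalities; hence $Z=\langle Z_1\rangle=\B(V_0)\cong\B(V)$, i.e.\ $\Ow_{\B(V)}$ is the unique closed orbit in its own closure and is closed.

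The hard part is ``closed $\Rightarrow$ Nichols'', for which I would prove the contrapositive: a coradically graded $R$ with $\langle R_1\rangle\subsetneq R$ has a non-closed orbit, by exhibiting a degeneration of $R$ with strictly more primitive elements. Fix a minimal graded generating subobject $U=\bigoplus_{n\ge1}U_n$ of $R^+$ (so $U_1=V$, $U_{\ge2}\ne 0$, and $R_n=(R^+R^+)_n\oplus U_n$), and filter $R$ by letting $\mathcal F^kR$ be the span of the images of all products of at least $(\mathrm{degree}-k)$ generators; thus $\mathcal F^0R=\langle V\rangle=\B(V)$. The technical crux is to check that $\mathcal F^\bullet$ is a Hopf algebra bi-filtration: $\mathcal F^k\mathcal F^l\subseteq\mathcal F^{k+l}$ is immediate, while $\Delta(\mathcal F^kR)\subseteq\sum_{p+q\le k}\mathcal F^pR\ot\mathcal F^qR$ follows from the facts $R_a\subseteq\mathcal F^{a-1}R$ for $a\ge 1$, $U_n\subseteq\mathcal F^{n-1}R$, $U_n\cap\mathcal F^{n-2}R=0$, and $\Delta(u)\in u\ot 1+1\ot u+\sum_{a+b=n}R_a\ot R_b$ for $u\in U_n$, propagated through products using multiplicativity of $\mathcal F^\bullet$; one then checks the Step-1 degeneration for $\mathcal F^\bullet$ lands in $X_B$. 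In $\mathrm{gr}_{\mathcal F}R\in\overline{\Ow_R}$ the class of each $u\in U_n$ with $n\ge2$ becomes primitive (its non-primitive part drops strictly in $\mathcal F$-degree), and these classes are linearly independent from $\Pp(\mathcal F^0R)=V$; hence $\dim\Pp(\mathrm{gr}_{\mathcal F}R)\ge\dim V+\dim U_{\ge2}>\dim\Pp(R)$. Since $\dim\Pp(\cdot)$ is a $\Ga_B$-invariant, $\mathrm{gr}_{\mathcal F}R\not\cong R$, so $\Ow_R\subsetneq\overline{\Ow_R}$.

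The main obstacle I anticipate is precisely this last step: constructing the filtration $\mathcal F^\bullet$ intrinsically from $R$, verifying it is a genuine Hopf algebra bi-filtration (so that the associated one-parameter limit exists and is again a connected coconnected Hopf algebra, hence a point of $X_B$), and controlling the primitives of $\mathrm{gr}_{\mathcal F}R$. Once the equivalence ``$\Ow_R$ closed $\iff$ $R$ a Nichols algebra'' is in hand, the final ``in particular'' assertion is immediate by quantifying over all connected coconnected finite dimensional Hopf algebras $R$ in $\C$ with $R\cong B$ as objects.
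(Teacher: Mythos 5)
Your plan is correct and reaches the theorem, but the route through the hard direction is genuinely different from the paper's. The paper also starts from one-parameter degenerations along Hopf algebra filtrations, but it uses \emph{two} canonical ones at once: the coradical filtration \emph{and} the radical filtration. Closedness of $\Ow_R$ forces $R\cong R_{grc}\cong R_{gra}$, and Proposition \ref{prop:gragrc} extracts everything from this single isomorphism: $R_{gra}$ is generated by $J/J^2$, which consists of primitives, while $R_{grc}$ has all its primitives in degree one; an algebra enjoying both properties is a Nichols algebra, and Lemma \ref{lem:pgen} (if $R$ and $R^*$ are both generated by primitives then $R\cong\B(\Pp(R))$) finishes. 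You instead stay entirely on the coradical side and, for a coradically graded $R$ not generated in degree one, perform a \emph{second} degeneration along the ``defect'' filtration by number of generators, producing extra primitives and separating the orbits via $\dim\Pp$. This does work --- the verifications you flag ($R_a\subseteq\mathcal{F}^{a-1}R$, propagation of the coproduct estimate through braided products by naturality of $\sigma$, and $U_n\cap\mathcal{F}^{n-2}R=0$ from minimality of the generating subobject) all go through in a semisimple $\C$ --- but it is more technical than the paper's two-filtration trick, which gets generation by primitives for free from $J/J^2$ and never needs to choose a generating set. Your ``Nichols $\Rightarrow$ closed'' argument is also organized differently: the paper takes a closed orbit $\Ow_{R'}$ in $\overline{\Ow_{\B(V)}}$, applies the already-proved converse to write $R'=\B(V')$, and combines Lemma \ref{lem:subobj} with the split inclusion $\B(V)\hookrightarrow\B(V')$ to get a dimension contradiction; your version, squeezing $\dim Z$ between $\dim\B(V_0)$ and $\dim B$ inside a coradically graded closed-orbit representative, is self-contained (it does not use the other implication) at the cost of the extra checks that $V_0\cong V$ and $V_0\subseteq\Pp(Z)$ in the limit. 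Both implications are sound as planned; what each approach buys is, for yours, independence from the dual algebra $R^*$, and for the paper's, a much shorter proof of the closed-implies-Nichols direction.
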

If $R_1$ and $R_2$ are two algebras in $X_B$, we say that $R_1$ \emph{specializes} to $R_2$ if $\Ow_{R_2}\subseteq \overline{\Ow_{R_1}}$.
We also say in this case that $R_1$ is a \emph{deformation} of $R_2$. 
It is known that, for the action of an algebraic group on an affine variety, every orbit contains a closed orbit in its closure. The theorem above thus implies that every algebra in $X_B$ is a deformation of a Nichols algebra.

We thus focus our attention on studying deformations of finite dimensional Nichols algebras $\B(V)$, as such deformations are the possible obstructions to the generation by skew-primitives and the coradical (see Theorem \ref{thm:main2}).
\begin{definition}\label{def:rigidity}
The Hopf algebra $R\in X_B$ is called rigid 
if $R\in \overline{\Ow_{R'}}$ for some $R'\in X_B$ implies that $R\cong R'$.
\end{definition}
The ultimate goal will thus be to prove that $\B(V)$ is rigid whenever it is finite dimensional, as this will imply that all the algebras in $X_B$ are Nichols algebras and are therefore generated by primitive elements. 
We will prove the following result:
\begin{theorem}\label{thm:main3}
Assume that $V$ is simple in $\C$ and that $\B(V)$ is finite dimensional.
Then $\B(V)$ is rigid. 
\end{theorem}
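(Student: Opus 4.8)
The plan is to recast rigidity as a statement about one-parameter degenerations, and then to use the simplicity of $V$ to show that the degenerating cocharacter must, up to scaling, be the grading cocharacter of $\B(V)$. Set $B=\B(V)$. Then $\Ga_B$ is reductive, being a product of general linear groups of the multiplicity spaces of $B$ in $\C$, and the $\N$-grading of $\B(V)$ defines a cocharacter $\la\colon\Gg_m\to\Ga_B$, legitimate because every homogeneous component $\B(V)_i$ is a subobject of $B$; from the construction of $X_B$ in Section~\ref{sec:variety} one has $\lim_{t\to0}\la(t)\cdot R=\mathrm{gr}\,R$ for every $R\in X_B$, the associated graded for the coradical filtration.

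First I would reformulate the claim. Since $\Ow_{\B(V)}$ is closed by Theorem~\ref{thm:main1}, and the closure of any orbit of a reductive group on an affine variety contains a unique closed orbit, $\B(V)\in\overline{\Ow_{R'}}$ holds precisely when $R'$ lies in the same fibre of $X_B/\!/\Ga_B$ as $\B(V)$, so rigidity of $\B(V)$ is the assertion that this fibre is a single orbit. I would therefore start with $R'\in X_B$ such that $\B(V)\in\overline{\Ow_{R'}}$ and aim at $R'\cong\B(V)$. The Hilbert--Mumford criterion (valid because $\Ga_B$ is reductive, $X_B$ affine, and $\Ow_{\B(V)}$ closed) furnishes a one-parameter subgroup $\nu\colon\Gg_m\to\Ga_B$ with $\lim_{t\to0}\nu(t)\cdot R'\in\Ow_{\B(V)}$; after translating $R'$ and $\nu$ by an element of $\Ga_B$ I may assume $B_0:=\lim_{t\to0}\nu(t)\cdot R'=\B(V)$.

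The crux is to constrain $\nu$. Being a limit along $\nu$, the point $B_0$ is $\nu$-fixed, so $\nu$ takes values in $\Aut_{\mathrm{Hopf}}(B_0)=\Aut_{\mathrm{Hopf}}(\B(V))$; in particular every $\nu(t)$ preserves the subobject of primitives $\Pp(\B(V))=V$. Since $V$ is simple and $\C$ is fusion over an algebraically closed field, $\Aut_\C(V)=K^\times$, so $\nu$ acts on $V$ through a single weight $t\mapsto t^{w}$. Because $\B(V)=B_0$ is generated as an algebra by $V$ and its $\nu$-grading is an algebra grading, $\B(V)_k$ is $\nu$-homogeneous of weight $kw$ for all $k$; thus $\nu$ is the trivial cocharacter if $w=0$, whence $R'=B_0=\B(V)$, and otherwise $\nu$ is a nonzero integer multiple of $\la$. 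In the latter case one checks that either both $\lim_{t\to0}$ and $\lim_{t\to\infty}$ of $t\mapsto\la(t)\cdot R'$ exist, forcing that family to be constant and $R'=\B(V)$, or else $\lim_{t\to0}\la(t)\cdot R'=\B(V)$, that is, $\mathrm{gr}\,R'\cong\B(V)$.

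It remains to deduce $R'\cong\B(V)$ from the fact that the coradically graded Hopf algebra of $R'$ is $\B(V)$. Let $F_\bullet$ be the coradical filtration of $R'$, so $F_0=\one$ and $\mathrm{gr}\,R'\cong\B(V)$. From $F_0=\one$ one obtains $F_1=\one\oplus\Pp(R')$, hence $\Pp(R')\cong\mathrm{gr}_1\,R'=\B(V)_1=V$; therefore the Hopf subalgebra of $R'$ generated by $\Pp(R')$ has associated graded equal to the subalgebra of $\B(V)$ generated by $V$, which is all of $\B(V)$ since $\B(V)$ is generated in degree one. Comparing dimensions, $R'$ is generated by its primitives, so it is a Hopf algebra quotient of the tensor algebra $T(V)$ on which $V$ embeds, and therefore surjects onto $\B(V)$; since $R'\cong B$ in $\C$ forces $\dim_K R'=\dim_K\B(V)$, this surjection is an isomorphism. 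The main obstacle is the third step: recognising that the Hilbert--Mumford cocharacter is forced, up to scaling, onto the grading cocharacter precisely because its limit is a Nichols algebra, hence generated in degree one. This is exactly where the simplicity of $V$ is indispensable — for non-simple $V$ one could have $\nu$ acting with several weights on $\Pp(\B(V))$ and the limit $B_0$ could be a pre-Nichols algebra of an object not isomorphic to $V$, which is the very phenomenon behind the non-rigid Nichols algebras mentioned in the introduction. The one-parameter-subgroup bookkeeping and the identification of $\mathrm{gr}\,R'$ with $\B(V)$ are then routine.
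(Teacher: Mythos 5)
Your reduction via Hilbert--Mumford and the observation that the destabilizing cocharacter $\nu$ must land in $\Aut_{\mathrm{Hopf}}(\B(V))$, hence act on the simple object $V=\Pp(\B(V))$ by a single weight and therefore be a power of the grading cocharacter, is a genuinely different (and appealing) route from the paper's. But the argument has a real gap at the last step. You conclude from ``$R'$ is a Hopf algebra quotient of $T(V)$ on which $V$ embeds'' that $R'$ surjects onto $\B(V)$. That universal property of $\B(V)$ holds only for \emph{graded} quotients of $T(V)$ generated in degree one (pre-Nichols algebras), because only then is the kernel a graded Hopf ideal concentrated in degrees $\geq 2$ and hence contained in the maximal such ideal. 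Your $R'$ is merely filtered, and for a general Hopf ideal $I\subseteq T(V)$ with $I\cap V=0$ there is no induced map $T(V)/I\to\B(V)$. In fact the assertion ``a finite dimensional ccc Hopf algebra generated by primitives, with $\Pp(R')\cong V$ and the same underlying object as $\B(V)$, is isomorphic to $\B(V)$'' is essentially the rigidity statement you are trying to prove; assuming it here is circular. The correct way to close this is the two-sided criterion of Lemma \ref{lem:pgen} (or Lemma \ref{lem:pairing}): you must \emph{also} show that $(R')^*$ is generated by its primitives, or equivalently that the pairing $\Pp(R')\ot\Pp((R')^*)\to\one$ is non-degenerate. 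This can be rescued using simplicity once more ($J/J^2$ is a nonzero quotient of $V$, hence isomorphic to $V$, so $\Pp((R')^*)\cong (J/J^2)^*\cong V^*$ pairs non-degenerately with $\Pp(R')$), but as written the step fails. A second, smaller gap: your dichotomy for $\nu=\la^m$ only treats $m$ of one sign. If $m$ has the opposite sign, only $\lim_{t\to\infty}\la(t)\cdot R'$ exists, the associated filtration is the radical-type one, and what you obtain is that the associated graded of $(R')^*$ is $\B(V^*)$; this case must be handled by dualizing, and your ``either both limits exist or $\lim_{t\to 0}=\B(V)$'' does not cover it. Relatedly, $\lim_{t\to0}\la(t)\cdot R$ is the associated graded for the filtration cut out by the subobjects $\oplus_{i\le n}\B(V)_i$ of $B$, which need not be the coradical filtration of $R$, so the identification $F_\bullet=$ coradical filtration also needs an argument (it follows a posteriori, but you use it a priori).

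For comparison, the paper's proof avoids Hilbert--Mumford entirely and is built symmetrically in $R$ and $R^*$ precisely so that both degeneration directions and both generation conditions come for free: Lemma \ref{lem:subobj} (upper semicontinuity of the rank of $\Delta-u\ot 1-1\ot u$) shows that $R\lt\B(V)$ forces $\Pp(R)$ to embed in $V$, and dually (Remark \ref{rmk:duality} and Lemma \ref{lem:nicholsdual}) $\Pp(R^*)$ embeds in $V^*$; simplicity and nontriviality force $\Pp(R)=V$ and $\Pp(R^*)=V^*$, and Lemma \ref{lem:pairing} then yields $R\cong\B(V)$. If you repair the two points above, your cocharacter argument does go through, but it ends up reproving exactly the inputs the paper extracts more directly from semicontinuity and duality.
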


Since our aim is to prove that all orbits in $X_B$ are closed, it is worthwhile asking how do hypothetical non-closed orbits in $X_B$ look like. To state the next result, recall that a \textit{pre-Nichols algebra} in a braided monoidal category $\C$ is a graded Hopf algebra in $\C$ which is generated by primitive elements (though not all the primitive elements are necessarily of degree 1). Thus, a pre-Nichols algebra $R$ is a quotient of the Hopf algebra $T(V)$ for some $V\in \C$ which also projects onto the Nichols algebra $\B(V)$. 
Dually, a post-Nichols algebra is a Hopf subalgebra of the graded-dual Hopf algebra of $T(V)$ that contains $\B(V^*)$. Post- and pre-Nichols algebras are graded-dual to each other (see Section 2.3 of \cite{AAB}).
\begin{theorem}\label{thm:main4}
Assume that $\B(V)$ is finite dimensional and not rigid. 
Then there is either a finite dimensional pre-Nichols algebra $R$ such that $\Pp(R)=V'\subsetneq V$ and such that $\B(V)\in \wb{\Ow_R}$, or a finite dimensional pre-Nichols algebra $R$ such that $\Pp(R)=V'\subsetneq V^*$ and such that $\B(V^*)\in \wb{\Ow_R}$. 
\end{theorem}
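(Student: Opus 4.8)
The plan is to exploit the dictionary between orbit-closure degenerations in $X_B$ and Hopf-algebraic structure, together with the self-duality of the picture. Start from the hypothesis that $\B(V)$ is finite dimensional but not rigid. By Definition \ref{def:rigidity} there exists $R'\in X_B$ with $\B(V)\in\wb{\Ow_{R'}}$ and $R'\not\cong\B(V)$; equivalently, $R'$ specializes to $\B(V)$. By Theorem \ref{thm:main1} the orbit of $\B(V)$ is closed, so $\wb{\Ow_{R'}}$ strictly contains $\Ow_{\B(V)}$ and $R'$ is a genuine deformation. The first step is to produce, out of this abstract degeneration, a \emph{graded} Hopf algebra: one replaces $R'$ by the associated graded algebra with respect to a suitable filtration (or, geometrically, by a degeneration along a one-parameter subgroup of $\Ga_B$), so that without loss of generality $R'$ is a connected graded Hopf algebra in $\C$ with $R'\cong B$ as objects, hence with the same Hilbert series as $\B(V)$. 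Let $V'=\Pp(R')$ be its space of primitive elements.

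The second step is to show $V'\neq V$. The key point is that the degree-one component determines the orbit: if $R'$ were generated in degree one by primitives with $\Pp(R')\cong V$, then $R'$ would be a pre-Nichols algebra with the same Hilbert series as $\B(V)$, forcing $R'\cong\B(V)$ (a finite-dimensional pre-Nichols algebra surjecting onto $\B(V)$ with equal dimension must be $\B(V)$), contradicting $R'\not\cong\B(V)$. So either $V'\subsetneq V$ as a subobject of the degree-one part, or $R'$ fails to be generated in degree one. This is where duality enters: passing from $X_B$ to the analogous variety for $B^*$ replaces a Hopf algebra by its graded dual, exchanging ``coconnected'' with ``connected'' and exchanging ``failure of generation in degree one'' (too few primitives, i.e. $\Pp(R')\subsetneq V$) with ``existence of primitives in higher degree'' for the dual side, i.e. $\Pp(R'^*)\subsetneq V^*$. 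Concretely: if $R'$ is not generated by its degree-one part, then its graded dual $R'^*$ has primitives in degrees $>1$, so $\Pp(R'^*)$, restricted to the degree-one component, is a proper subobject $V''\subsetneq V^*$; and since $\B(V)^*\cong\B(V^*)$, the degeneration $R'\leadsto\B(V)$ dualizes to $R'^*\leadsto\B(V^*)$, i.e. $\B(V^*)\in\wb{\Ow_{R'^*}}$. Taking $R$ to be the pre-Nichols quotient of $R'$ (resp. of $R'^*$) generated by its degree-one primitives then gives, in the first alternative, a finite-dimensional pre-Nichols algebra with $\Pp(R)=V'\subsetneq V$ and $\B(V)\in\wb{\Ow_R}$ (the orbit closure can only shrink under taking such a quotient, but still contains the closed orbit of $\B(V)$ by a Hilbert-series count), and in the second alternative the same with $V$ replaced by $V^*$.

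The third step is to check that $R$ (resp. $R'^*$) is still finite dimensional and still lies in $X_B$, i.e. is connected and coconnected. Connectedness and gradedness are preserved by construction; finite dimensionality follows because $R$ has the same Hilbert series as $\B(V)$ in the graded degeneration (orbit degenerations in $X_B$ preserve all the tensor-categorical dimensions, since $X_B$ parametrizes structures on the fixed object $B$), and coconnectedness is automatic for a connected graded Hopf algebra in a fusion category. One then simply reads off the statement of the theorem.

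The main obstacle I expect is the second step: ensuring that the non-rigidity witness $R'$ can be arranged to be \emph{graded with $\Pp(R')$ a genuine subobject of $V$}, i.e. controlling precisely how the degree-one part and the primitive elements behave under the degeneration. The delicate point is that an arbitrary point of $\wb{\Ow_{R'}}\setminus\Ow_{R'}$ need not be graded, and after replacing $R'$ by an associated graded one must verify that the resulting algebra still degenerates to $\B(V)$ and that its primitives have genuinely dropped (rather than, say, $R'$ acquiring new primitives that make it isomorphic to $\B(V)$ after all). Handling this cleanly is exactly where one invokes Theorem \ref{thm:main1} (closedness of the Nichols orbit) to rule out the degenerate possibility that the deformation is trivial, and where the pre/post-Nichols dichotomy becomes unavoidable rather than a matter of choice.
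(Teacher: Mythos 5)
Your overall shape --- degenerate the witness to a graded algebra, then use duality to split into the ``pre-Nichols over a subobject of $V$'' and ``pre-Nichols over a subobject of $V^*$'' alternatives --- matches the paper's, but the proposal has a genuine gap at exactly the point you flag as ``the main obstacle.'' You say ``without loss of generality $R'$ is a connected graded Hopf algebra'' after passing to an associated graded, but you give no mechanism guaranteeing that the associated graded algebra is still a \emph{nontrivial} deformation of $\B(V)$; it could perfectly well become isomorphic to $\B(V)$, and then you have produced nothing. The paper's resolution is not Theorem \ref{thm:main1} alone but a minimality argument combined with playing the two canonical filtrations against each other: choose a non-closed orbit $\Ow_R$ of \emph{minimal dimension} in $X_V$ and form both $R_{gra}$ (radical filtration) and $R_{grc}$ (coradical filtration). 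If both are isomorphic to $R$, Proposition \ref{prop:gragrc} makes $R$ a Nichols algebra, contradiction; if neither is, their orbits have strictly smaller dimension, hence by minimality are closed, hence both equal $\Ow_{\B(V)}$, and Proposition \ref{prop:gragrc} applies again. So exactly one of $R\cong R_{gra}$ or $R\cong R_{grc}$ holds, and that is what hands you a graded algebra that is genuinely not $\B(V)$, generated by its degree-one primitives (or dually so). Your proposal never isolates this dichotomy, and without it the ``either $V'\subsetneq V$ or $R'$ is not generated in degree one'' case split does not close up.

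A second, concrete error: you propose to take $R$ to be ``the pre-Nichols quotient of $R'$ generated by its degree-one primitives'' and assert that its orbit closure still contains $\Ow_{\B(V)}$ ``by a Hilbert-series count.'' That move is not available. Every point of $X_B$ has underlying object $B$; a proper quotient or subalgebra of $R'$ has a different underlying object, so it does not lie in $X_B$ at all, and no statement about its $\Ga_B$-orbit closure makes sense, let alone that it contains $\Ow_{\B(V)}$. In the paper the pre-Nichols algebra produced is the \emph{entire} algebra $R\cong R_{gra}$ (no quotient is taken): gradedness forces the degree-one part to be primitive and to generate, so $R$ itself sits between $T(R_1)$ and $\B(R_1)$, and Lemma \ref{lem:subobj} plus a dimension count shows $R_1$ is a proper subobject of $V$. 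Your dimension-count observation (a finite-dimensional pre-Nichols algebra of the same dimension as $\B(V)$ projecting onto it must equal it) is correct and is used in the paper for exactly this last step, but the surrounding construction needs to be repaired as above.
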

Summarizing Theorem \ref{thm:main1} and \ref{thm:main4}, we get the following result:
\begin{theorem}\label{thm:main5}
Let $\C$ be a braided fusion category. The following conditions are equivalent:
\begin{enumerate}
\item For every object $B\in \C$, all the orbits of the action of $\Ga_B=\Aut_{\C}(B)$ on $X_B$ are closed.
\item All finite dimensional Nichols algebras in $\C$ are rigid. 
\item All finite dimensional pre-Nichols algebras in $\C$ are Nichols algebras.
\item Every connected and coconnected Hopf algebra $R$ in $\C$ is isomorphic to $\B(\Pp(R))$.
\end{enumerate}
\end{theorem}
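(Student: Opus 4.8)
The plan is to organize the equivalence around Theorem \ref{thm:main1} and Theorem \ref{thm:main4}: the implications $(1)\Rightarrow(4)\Rightarrow(2)\Rightarrow(1)$ and $(2)\Rightarrow(3)$ will be soft consequences of Theorem \ref{thm:main1}, while the remaining implication $(3)\Rightarrow(2)$ carries the content and will use Theorem \ref{thm:main4}. Two elementary remarks are used throughout. First, a finite dimensional pre-Nichols algebra $R$ is a graded Hopf algebra with $R_0=\one$; hence its coradical equals $\one$ and its augmentation ideal is nilpotent, so $R$ is connected and coconnected and defines a point of $X_R$. Second, if $W,W'$ are objects of $\C$ and $\B(W)\cong \B(W')$ as Hopf algebras in $\C$ (not necessarily as graded objects), then restricting such an isomorphism to the subobjects of primitives and using $\Pp(\B(W))=W$ and $\Pp(\B(W'))=W'$ yields $W\cong W'$ as objects of $\C$; since a semisimple object of a fusion category is never isomorphic to a proper subobject, $W'\subsetneq W$ forces $\B(W)\not\cong \B(W')$.

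For $(1)\Rightarrow(4)$: if $R$ is a connected coconnected finite dimensional Hopf algebra in $\C$, take $B=R$ as an object; then $R\in X_B$, its orbit is closed by $(1)$, hence $R$ is a Nichols algebra by Theorem \ref{thm:main1}, and since any Nichols algebra $\B(W)$ satisfies $\Pp(\B(W))=W$ we get $R\cong\B(\Pp(R))$. For $(4)\Rightarrow(2)$: let $\B(V)$ be a finite dimensional Nichols algebra and suppose $\B(V)\in\overline{\Ow_{R'}}$ for some $R'\in X_B$ with $B\cong\B(V)$ as objects; then $R'$ is connected and coconnected, so $(4)$ gives $R'\cong\B(\Pp(R'))$, whence $R'$ is a Nichols algebra and $\Ow_{R'}$ is closed by Theorem \ref{thm:main1}; thus $\B(V)\in\Ow_{R'}$ and $\B(V)\cong R'$, i.e.\ $\B(V)$ is rigid. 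For $(2)\Rightarrow(1)$: given any object $B$ and any $R\in X_B$, the set $\overline{\Ow_R}$ is closed and $\Ga_B$-stable, and since $\Ga_B$ is a reductive group acting on the affine variety $X_B$ it contains a closed orbit $\Ow_{R'}$ (e.g.\ one of minimal dimension); by Theorem \ref{thm:main1} this $R'$ is a finite dimensional Nichols algebra, hence rigid by $(2)$, and $R'\in\overline{\Ow_R}$ forces $R'\cong R$, so $\Ow_R=\Ow_{R'}$ is closed.

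It remains to prove $(2)\Rightarrow(3)$ and $(3)\Rightarrow(2)$. For the first, let $R$ be a finite dimensional pre-Nichols algebra, viewed as a point of $X_R$ by the first remark; then $\overline{\Ow_R}$ contains a closed orbit, which by Theorem \ref{thm:main1} is the orbit of a finite dimensional Nichols algebra $\B(V)$, and this $\B(V)$ is rigid by $(2)$; since $\B(V)\in\overline{\Ow_R}$ we get $\B(V)\cong R$, so $R$ is a Nichols algebra. Finally, $(3)\Rightarrow(2)$, the heart of the argument: suppose for contradiction that some finite dimensional Nichols algebra $\B(V)$ is not rigid. By Theorem \ref{thm:main4} there is a finite dimensional pre-Nichols algebra $R$ with either $\Pp(R)=V'\subsetneq V$ and $\B(V)\in\overline{\Ow_R}$, or $\Pp(R)=V'\subsetneq V^*$ and $\B(V^*)\in\overline{\Ow_R}$; the two cases are parallel, so assume the first. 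By hypothesis $(3)$, $R$ is a Nichols algebra, and since its space of primitives is $V'$ this means $R\cong \B(V')$. But $\B(V')$ is a Nichols algebra, so by Theorem \ref{thm:main1} the orbit $\Ow_R=\Ow_{\B(V')}$ is closed; hence $\B(V)\in\overline{\Ow_R}=\Ow_R$, that is, $\B(V)\cong\B(V')$ as Hopf algebras in $\C$, contradicting $V'\subsetneq V$ by the second remark. Thus every finite dimensional Nichols algebra is rigid, proving $(2)$ and with it the theorem. The only genuine obstacle is this last step: the other implications merely repackage Theorem \ref{thm:main1}, whereas here Theorem \ref{thm:main4} is essential, since it converts the purely orbit-theoretic failure of rigidity — a statement about possibly non-graded deformations — into the existence of an honest graded pre-Nichols algebra, exactly the kind of object to which $(3)$ applies; once that is available, closedness of Nichols orbits together with the impossibility of an object being isomorphic to a proper subobject close the loop.
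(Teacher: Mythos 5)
Your proposal is correct and follows essentially the same route as the paper, which presents Theorem \ref{thm:main5} as a summary of Theorems \ref{thm:main1} and \ref{thm:main4} and leaves the routine implications implicit; you have simply written out those implications explicitly (the soft cycle $(1)\Rightarrow(4)\Rightarrow(2)\Rightarrow(1)$ and $(2)\Rightarrow(3)$ via closedness of Nichols orbits, and $(3)\Rightarrow(2)$ via Theorem \ref{thm:main4} together with the observation that $\B(V)\not\cong\B(V')$ when $V'\subsetneq V$). All steps check out, including the use of the existence of a closed orbit in any orbit closure for a reductive group acting on an affine variety, which the paper itself invokes in the proof of Theorem \ref{thm:main1}.
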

In case the category $\C$ is the category $_A^A\YD$ of Yetter-Drinfeld modules over a finite dimensional semisimple Hopf algebra $A$, the bosonization process, which produces from a Hopf algebra $R$ in $\C$ a Hopf algebra $R\# A$ in $Vec_K$, gives the following result: 
\begin{theorem}\label{thm:main2}
Let $A$ be a finite dimensional semisimple Hopf algebra. The following are equivalent:
\begin{enumerate}
\item For every $B\in _A^A\YD$ the orbits of $\Ga_B$ in $X_B$ are closed.
\item All finite dimensional Nichols algebras in $_A^A\YD$ are rigid.
\item Every Hopf algebra $H$ in which the coradical is a Hopf algebra isomorphic to $A$ is generated by the zeroth and first levels of its coradical filtration.
\item Every connected and coconnected Hopf algebra $R$ in $_A^A\YD$ is isomorphic to a Nichols algebra.
\end{enumerate}
For $A=KG$ where $G$ is a finite group the second statement says that every finite dimensional pointed Hopf algebra $H$ with $G(H)=G$ is generated by group-like elements and skew-primitives.
\end{theorem}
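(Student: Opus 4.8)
The plan is to read off $(1)\Leftrightarrow(2)\Leftrightarrow(4)$ from Theorem~\ref{thm:main5} and then to weave in condition $(3)$ through the Radford biproduct construction together with Theorem~\ref{thm:main4}, establishing the cycle $(2)\Rightarrow(4)\Rightarrow(3)\Rightarrow(2)$. Since $A$ is a finite-dimensional semisimple (hence cosemisimple) Hopf algebra, $\C:={}_A^A\YD$ is a braided fusion category, so Theorem~\ref{thm:main5} applies to it, and its conditions $(1)$, $(2)$, $(4)$ coincide verbatim with those of the present statement: for $(4)$ one only needs that $\Pp(\B(V))=V$, so that ``$R$ is isomorphic to a Nichols algebra'' means exactly ``$R\cong\B(\Pp(R))$''. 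This yields $(1)\Leftrightarrow(2)\Leftrightarrow(4)$, and it remains to fit $(3)$ into the circle.

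For $(4)\Rightarrow(3)$ I would run the argument sketched in the introduction with $KG$ replaced by $A$. Let $H$ be a finite-dimensional Hopf algebra whose coradical is a Hopf subalgebra isomorphic to $A$. As $A$ is cosemisimple, the coradical filtration of $H$ is a Hopf algebra filtration, the grading of $\operatorname{gr}H$ splits the inclusion $A\hookrightarrow\operatorname{gr}H$ by a Hopf projection, and Radford's biproduct theorem gives $\operatorname{gr}H\cong R\#A$ with $R=(\operatorname{gr}H)^{\operatorname{co}A}$ a connected graded Hopf algebra in $\C$. Because $\operatorname{gr}H$ is coradically graded and the coradical filtration of a bosonization $R\#A$ equals $(\text{coradical filtration of }R)\#A$, the algebra $R$ is coradically graded; being finite-dimensional, connected and graded, it is also coconnected, so $(4)$ applies and $R\cong\B(\Pp(R))$. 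Transporting this isomorphism and using $\Pp(R)=R_1$, we see that $R$ is generated by $R_1$; hence $\operatorname{gr}H$ is generated in coradical-filtration degrees $\le1$, and a standard filtration argument shows that $H$ is generated by $H_0$ and $H_1$. For $A=KG$, the zeroth level of the coradical filtration of a pointed Hopf algebra $H$ with $G(H)=G$ is $KG$ and its first level is spanned by the group-likes and the skew-primitives, so $(3)$ becomes the last assertion of the theorem.

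The implication $(3)\Rightarrow(2)$ is where the geometry must re-enter, and I expect it to be the main obstacle. Suppose $(2)$ fails, so that some finite-dimensional Nichols algebra $\B(V)$ in $\C$ is not rigid; by Theorem~\ref{thm:main4} (and replacing $V$ by $V^*$ if necessary, which does not affect the argument) there is a finite-dimensional pre-Nichols algebra $R$ in $\C$ with $\Pp(R)=V'\subsetneq V$ and $\B(V)\in\overline{\Ow_R}$. Now $R$ is a finite-dimensional connected graded Hopf algebra in $\C$, hence also coconnected, so $R^*$ is again a finite-dimensional connected coconnected Hopf algebra in $\C$, and $(3)$ applies to both bosonizations $R\#A$ and $R^*\#A$. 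The first merely recovers that $R$ is generated by $\Pp(R)=V'=R_1$; the second shows that $R^*$ is generated by $\Pp(R^*)$, and since $R$ is degree-one generated one has $\mathfrak m^n=\bigoplus_{i\ge n}R_i$ for the augmentation ideal $\mathfrak m$ of $R$, so $\Pp(R^*)\cong(\mathfrak m/\mathfrak m^2)^*=(R_1)^*=(V')^*=(R^*)_1$. Thus $R^*$ is also a degree-one generated pre-Nichols algebra, of $(V')^*$, so there is a Hopf surjection $R^*\twoheadrightarrow\B((V')^*)$ which is an isomorphism in degree one; dualizing yields an inclusion $\B(V')\hookrightarrow R$ that is an isomorphism in degree one, and since $R$ is generated by $R_1\subseteq\B(V')$ this inclusion is onto, so $R\cong\B(V')$. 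But every point of $X_B$ has the underlying object $B\cong R$, whence $\dim\B(V')=\dim R=\dim\B(V)$; on the other hand $\B(V')$ is the subalgebra of $\B(V)$ generated by $V'$, and because $V'\subsetneq V$ as objects of $\C$ it is a proper, finite-dimensional subalgebra, so $\dim\B(V')<\dim\B(V)$ --- a contradiction. Hence $(3)\Rightarrow(2)$, and all four conditions are equivalent.

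The delicate point in $(3)\Rightarrow(2)$ is precisely the duality step: one must correctly identify $\Pp(R^*)$ with the degree-one part $(V')^*$ of $R^*$, and then match degree-one pieces when dualizing $R^*\twoheadrightarrow\B((V')^*)$ carefully enough to conclude $R\cong\B(V')$, and not merely $\operatorname{gr}R\cong\B(V')$. Conceptually, the reason the geometry cannot be avoided here is that $(3)$ is strictly weaker than the other three conditions ``on the nose'' --- being generated by primitives does not by itself force an algebra to be a Nichols algebra --- so Theorem~\ref{thm:main4}, i.e.\ the closed-orbit description, is genuinely needed.
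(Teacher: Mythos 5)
Your proposal is correct, and it agrees with the paper on most of the cycle: the equivalence $(1)\Leftrightarrow(2)\Leftrightarrow(4)$ is read off from Theorem \ref{thm:main5} exactly as the paper intends, and your $(4)\Rightarrow(3)$ via the coradical filtration, Radford biproduct and the compatibility of coradical filtrations under bosonization is the content of the paper's Lemma \ref{lem:genHgenR}. Where you diverge is in closing the loop from $(3)$. The paper proves $(3)\Rightarrow(4)$ directly and purely algebraically (end of Section \ref{sec:hopffusion}): given a ccc Hopf algebra $R$, condition $(3)$ applied to the bosonizations of $R_{grc}$ and of $(R^*)_{grc}$ shows that both $R$ and $R^*$ are generated by their primitives, and Lemma \ref{lem:pgen} then forces $R\cong\B(\Pp(R))$; no geometry enters. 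You instead prove $(3)\Rightarrow(2)$ by contradiction through Theorem \ref{thm:main4}: a non-rigid $\B(V)$ produces a finite-dimensional degree-one-generated pre-Nichols algebra $R$ with $\Pp(R)\subsetneq V$ lying over $\B(V)$, condition $(3)$ applied to $R^*\#A$ makes $R^*$ primitively generated, hence $R\cong\B(\Pp(R))$ (your explicit dualization is a hands-on instance of Lemma \ref{lem:pgen}), and the split inclusion $\B(\Pp(R))\hookrightarrow\B(V)$ gives a strict dimension drop contradicting that both algebras have underlying object $B$. This is a valid alternative and it does correctly exploit that $X_B$ fixes the underlying object; the one inaccuracy is your closing meta-claim that ``the geometry cannot be avoided'' for this implication --- Lemma \ref{lem:pgen} (if $R$ and $R^*$ are both primitively generated then $R$ is a Nichols algebra) is precisely the non-geometric bridge the paper uses, so the detour through Theorem \ref{thm:main4} is a heavier hammer than necessary, though not a wrong one. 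A minor point worth tidying: in your duality step you write $\Pp(R)=V'=R_1$, whereas what the construction actually guarantees is that $R$ is generated by $R_1\subseteq\Pp(R)\subsetneq V$; your argument survives verbatim with $R_1$ in place of $V'$, since $\Pp(R^*)\cong(\mathfrak m/\mathfrak m^2)^*=(R_1)^*$ and $R_1\subsetneq V$ already yields the dimension contradiction.
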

The study of deformations of Hopf algebras was initiated by Gerstenhaber and Schack in \cite{GS}. Du, Chen and Ye studied deformations of graded Hopf algebras in \cite{DCY}. Angiono, Kochetov and Mastnak studied deformations of Nichols algebras in \cite{AKM}. Deformations were also studied by Makhlouf in \cite{Makhlouf}. 
The deformations in the above papers are deformations by a parameter $\la$. 
We will show that our notion of rigidity, at least for Nichols algebras, is equivalent to the rigidity of Angiono, Kochetov and Mastnak. In \cite{AKM} the authors gave a proof that all Nichols algebras of diagonal type are rigid. Theorem \ref{thm:main2} above provides a new proof for the generation of pointed Hopf algebras with an abelian group of group-like elements by skew-primitives and group-likes.
In Section 4 of \cite{A1} Angiono proved this result by ruling out the existence of finite dimensional pre-Nichols algebras which are not Nichols algebras. The proof in this paper follows from the rigidity result of \cite{AKM} which is based on the description of Nichols algebras from \cite{A1} by generators and relations, but not on the case by case study done in Section 4 of \cite{A1}. 

This paper is organized as follows: in Section \ref{sec:preliminaries} we will give preliminaries about braided fusion categories, Hopf algebras, and the results from the theory of algebraic groups and geometric invariant theory which we will use here. In section \ref{sec:hopffusion} we will discuss in more detail Hopf algebras in braided fusion categories, and prove the equivalence of the second and third conditions of Theorem \ref{thm:main2}. In Section \ref{sec:catslinear} we will give a description of braided fusion categories using vector spaces and linear algebra. This will be used in Section \ref{sec:variety} to show that the collection of all connected and coconnected Hopf algebras which are isomorphic to a given object $B$ of $\C$ form an affine variety $X_B$. We will also construct an action of $\Ga_B:=\Aut_{\C}(B)$ on this variety, and show that the orbits correspond to isomorphism classes of Hopf algebras. In the end of Section \ref{sec:variety} we will also give a proof of Theorem \ref{thm:main3}. 
In Section \ref{sec:filtrations} we will discuss filtrations of Hopf algebras and their relation to geometric invariant theory. In Section \ref{sec:proofthm1} we will give a proof of Theorem \ref{thm:main1} and \ref{thm:main4}. In Section \ref{sec:rigidity} we will explain the relations between the different notions of rigidity and give a new proof, using the results of Angiono Kochetov and Mastnak, to the generation of pointed Hopf algebras with abelian group of group-like elements by group-like elements and skew-primitives. 
\end{section}

\begin{section}{Preliminaries}\label{sec:preliminaries}
\subsection{Braided fusion categories} We recall here briefly the definitions of fusion and braided categories. (see \cite{ENO}).
As stated in the introduction, we assume throughout the paper that our ground field $K$ is algebraically closed and of characteristic zero.
\begin{definition} A fusion category $\C$ over $K$ is an abelian category that satisfies the following properties:
\begin{enumerate}
\item The category $\C$ is enriched over $Vec_K$. This means that all hom-spaces in $\C$ are finite dimensional $K$-vector spaces.
\item The category $\C$ is semisimple. This means that every object in $\C$ can be written uniquely as a direct sum of simple objects.
\item The category $\C$ is monoidal. This means that we have a functor \begin{equation}\bigotimes: \C\times\C\to \C\end{equation} together with associativity isomorphisms \begin{equation}\alpha_{X,Y,Z}:(X\ot Y)\ot Z\to X\ot (Y\ot Z)\end{equation} for every three objects $X,Y,Z$ of $\C$ satisfying the usual pentagon axiom, and there is a unique object, up to isomorphism, $\one$, such that the functors 
\begin{equation}X\mapsto \one\ot X\text{ and }X\mapsto X\ot \one\end{equation} are both isomorphic to the identity functor.
\item The number of isomorphism classes of simple objects in $\C$ is finite.
\item The tensor unit $\one$ is a simple object in $\C$. 
\item The category $\C$ is rigid. This means that every object $X$ has a right dual $X^*$ and a left dual $^*X$.
The right dual is defined uniquely up to an isomorphism by the condition that there are maps $ev_X:X^*\ot X\to \one$
and $coev_X:\one\to X\ot X^*$ satisfying some coherence conditions.
The left dual is defined similarly. The semisimplicity of a fusion category implies that left and right duals are isomorphic. 
\end{enumerate}
A fusion category is called \textit{braided} if it is equipped with a natural isomorphism 
\begin{equation}\sigma_{X,Y}:X\ot Y\to Y\ot X\end{equation} for every two objects $X,Y\in\C$ such that for every $X,Y,Z\in \C$ the morphism 
\begin{equation}X\ot Y\ot Z\stackrel{\sigma_{X,Y}\ot 1_Z}{\to}Y\ot X\ot Z\stackrel{1_Y\ot\sigma_{X,Z}}{\to} Y\ot Z\ot X\end{equation}
is equal to the morphism 
\begin{equation}X\ot Y\ot Z\stackrel{\sigma_{X,Y\ot Z}}{\to} Y\ot Z\ot X \end{equation}
and the morphism 
\begin{equation}X\ot Y\ot Z\stackrel{1_X\ot \sigma_{Y,Z}}{\to}X\ot Z\ot Y\stackrel{\sigma_{X,Z}\ot 1_Y}{\to} Z\ot X\ot Y\end{equation} 
is equal to the morphism
\begin{equation} 
X\ot Y\ot Z\stackrel{\sigma_{X\ot Y,Z}}{\to}Z\ot X\ot Y \end{equation}
(to ease notations, we do not write here the associativity constraints). 
Notice that we do not assume that $\sigma_{X,Y}\sigma_{Y,X} = 1_{Y\ot X}$. A category satisfying this extra assumption is called \textit{symmetric}.
\end{definition}
One important example of a braided fusion category is the Drinfeld center of $Vec_G$. 
The objects in this category are vector spaces that admit a $G$-action and a $G$-grading. 
The action and the grading should be compatible in the following sense: for $g,h\in G$ we have $g\cdot V_h\subseteq V_{ghg^{-1}}$.
This category is braided. The braiding is given by the following formula:
\begin{equation}V\ot W\to W\ot V\end{equation}
$$v\ot w\mapsto g\cdot w\ot v \text{ for } v\in V_g.$$
This is an example of a braided monoidal category that is also \textit{modular}.

\subsection{Algebras, coalgebras, and Hopf algebras inside monoidal categories} An associative unital algebra inside a monoidal category $\C$ is defined as an object $A$ of the category together with morphisms
$m: A\ot A\to A$ and $u:\one\to A$ satisfying the associativity relation \begin{equation}m(m\ot 1_A) = m(1_A\ot m)$$ and the unit axiom $$1_A=m(1_A\ot u)=m(u\ot 1_A).\end{equation} A co-associative counital coalgebra is defined similarly, by changing the domain and codomain of all the relevant morphisms. 
A \emph{Hopf algebra} $R$ inside a braided monoidal category $\C$ is an object $R$ of $\C$ equipped with the following maps 
$$m:R\ot R\to R$$
$$u:\one\to R$$
\begin{equation}\Delta:R\to R\ot R\end{equation}
$$\epsilon:R\to\one\text{ and}$$
$$S:R\to R$$ such that the following conditions hold:
\begin{enumerate}
\item $(R,m,u)$ is an associative unital algebra.
\item $(R,\Delta,\epsilon)$ is a coassociative counital coalgebra.
\item $\Delta$ and $\epsilon$ are algebra maps. This means that the diagrams \\
\begin{equation}\xymatrix{
	& R\ot R\ot R\ot R\ar[rr]^{Id_R\ot \sigma_{R,R}\ot Id_R} & & R\ot R\ot R\ot R\ar[rd]^{m\ot m} & \\
	R\ot R\ar[ru]^{\Delta\ot\Delta}\ar[rrd]^{m} & & & & R\ot R \\
	& & R\ar[rru]^{\Delta} & & 
} \end{equation}

\begin{center}and   \\
\begin{equation}\xymatrix{R\ot R\ar[d]^{\epsilon\ot\epsilon} \ar[r]^{m} & R\ar[d]^{\epsilon} \\
	\one \ot \one\ar[r]^{m_{\one}} & \one
	}\end{equation} \end{center}
are commutative.
\item The map $S$ is an antipode. This means that the two compositions \begin{equation}m(S\ot Id_R)\Delta:R\to R \text{ and } m(Id_R\ot S)\Delta:R\to R\end{equation} are equal to $u\epsilon$.
\end{enumerate}
Notice that algebras and coalgebras can be defined in any monoidal category, whereas the definition of a Hopf algebra requires the braiding in the category. If $R$ is a Hopf algebra inside a braided fusion category $\C$, then the dual object $R^*$ is again a Hopf algebra, where the multiplication is given by $\Delta^*:R^*\to (R\ot R)^*\cong R^*\ot R^*$ and the other structure maps are defined similarly as the dual of the structure maps of $R$, see Section 5 of \cite{AS00} and the preliminaries in \cite{Zhang}. 

\begin{remark}
Some of the Hopf algebras in this paper will actually be graded Hopf algebras in the bigger category $\text{Ind}(\C)$, which contains infinite direct limits of diagrams in $\C$. To make it clear that a certain algebra is already contained in $\C$ we will say it is \textit{finite dimensional}. This is consistent with the notion of finite dimensionality when the category is the category of Yetter-Drinfeld modules over some finite dimensional Hopf algebra.
\end{remark}

We say that an associative algebra $A$ inside a fusion category is \emph{connected} if $A/J(A)\cong \one$, the trivial algebra in $\C$. Here $J(A)$ is the Jacobson radical of $A$, and is defined as the biggest nilpotent ideal in $A$. This definition makes sense in a general fusion category, and not only for finite dimensional algebras over a field. Indeed, an ideal $I$ of $A$ is a subobject of $A$ for which the image of the restriction of the multiplication maps \begin{equation}I\ot A\to A\text{ and }A\ot I\to A\end{equation} is contained in $I$. Nilpotency of the ideal means that for a big enough $N$, the multiplication map \begin{equation}I^{\ot N} = I\ot I\ot I\ot \cdots\ot I\to A\end{equation} is the zero map. 
In a similar way, we define a coalgebra $C$ to be coconnected, if its dual algebra $C^*$ is connected. This is equivalent to the coradical of $C$, which is the largest cosemisimple subcoalgebra of $C$, being isomorphic to $\one$.
\begin{definition} \label{def:ccc}
A Hopf algebra is called (co)connected if it is (co)connected as a (co)algebra. 
A Hopf algebra is called connected coconnected (or ccc) if it is both connected and coconnected.
\end{definition}
Among the ccc Hopf algebras the Nichols algebras play a prominent role (see \cite{AS02}). We recall now their definition.

\begin{definition}\label{def:Nichols}(see Subsection 5.7. in \cite{BB12})
For a given object $V\in \C$ the Nichols algebra $\B(V)$ is the unique Hopf algebra in $\text{Ind}(\C)$ that satisfies the following conditions:
\begin{enumerate}
\item The Hopf algebra $\B(V)$ is graded by the non-negative integers as a Hopf algebra.
\item The zeroth component of the grading satisfies $\B(V)_0=\one$. 
\item The first component of the grading satisfies $\B(V)_1=V$, and $\B(V)$ is generated by $V$.
\item The subobject of primitive elements of $\B(V)$ is $V$. This subobject is defined for any Hopf algebra as $\Pp(R)=\Ker(\Delta - u\ot 1 - 1\ot u)$.
\end{enumerate}
\end{definition} 
\begin{remark} 
One of the fundamental and very difficult questions in the study of Nichols algebras is to determine for which objects $V\in\C$ the Nichols algebra is finite dimensional. 
\end{remark}
The definition above gives us a concrete way to construct the Nichols algebra. 
Since $\B(V)$ is generated by $V$, and the elements of $V$ are primitive in $\B(V)$ we have a surjective Hopf algebra map $\pi:T(V)\to \B(V)$. The fact that $\B(V)$ is graded and the elements of $V$ are of degree 1 implies that the map $\pi$ is a graded map. 
The Hopf algebra $\B(V)$ can then be constructed from $T(V)$ in the following way: 
we divide $T(V)$ first by the Hopf ideal $I_1$ of $T(V)$ generated by the primitive elements of $T(V)$ in degrees $>1$. Then in the quotient graded Hopf algebra $T(V)/I_1$ we divide by the ideal $I_2$ generated by the primitive elements of degree $>1$ in this algebra, and continue inductively.
Notice that it might happen that by dividing out $I_1$ we get new primitive elements in $T(V)/I_1$. This is the reason we need to repeat this process.
See also the introduction in \cite{AG}. An equivalent definition is given by dividing out the kernel of the Woronowicz symmetriser, see Subsection 5.7. in \cite{BB12}. 

The Nichols algebra of $V$ and of $V^*$ are related in the following way:
Recall that for a graded Hopf algebra $R=\oplus_{n\geq 0} R_n$ in $\C$, in which all the homogeneous components are finite dimensional, the graded dual 
\begin{equation}S = \bigoplus_{n\geq 0} (R_n)^*\end{equation} is also a graded Hopf algebra. 
In case $R$ itself is finite dimensional, this is the same as the dual $R^*$. 
We claim the following (see also Lemma 5.5 in \cite{AS00} and Proposition 3.2.20 in \cite{AnGr} for the case the category is the category of Yetter-Drinfeld modules over a Hopf algebra):
\begin{lemma}\label{lem:nicholsdual}
The graded dual of $\B(V)$ is $\B(V^*)$. In particular $\B(V)$ is finite dimensional if and only if $\B(V^*)$ is finite dimensional.
\end{lemma}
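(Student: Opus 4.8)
The plan is to exploit the uniqueness clause in Definition \ref{def:Nichols}: it suffices to show that the graded dual $S:=\bigoplus_{n\geq 0}(\B(V)_n)^{*}$ satisfies the four defining properties of $\B(V^{*})$. (We must use the graded dual rather than the full dual precisely because $\B(V)$ may be infinite dimensional.) Since $\B(V)$ is a graded Hopf algebra in $\text{Ind}(\C)$ with finite dimensional homogeneous components, $S$ is again such a graded Hopf algebra, with $S_n\cong(\B(V)_n)^{*}$; this is the componentwise version of the duality for Hopf algebras recalled after the definition of a Hopf algebra, and it uses the braiding in $\C$ in the same way. In particular $S_0\cong\one^{*}\cong\one$ and $S_1\cong(\B(V)_1)^{*}\cong V^{*}$, which disposes of conditions (1), (2), and the ``degree one part'' of conditions (3) and (4).

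The heart of the argument is a degreewise duality between primitives and indecomposables. For a connected graded Hopf algebra $R$ in $\text{Ind}(\C)$ with $R_0=\one$, write $R_{+}=\bigoplus_{n\geq 1}R_n$, let $m_{+}\colon R_{+}\ot R_{+}\to R_{+}$ be the restricted multiplication, and set $Q(R)=\mathrm{coker}(m_{+})$, the graded object of indecomposables; note that $\Pp(R)$ is a graded subobject of $R_{+}$ and $Q(R)$ a graded quotient. I would establish that, under the identification $(S')_n\cong(R_n)^{*}$ for $S'$ the graded dual of $R$, there are natural isomorphisms $\Pp(S')_n\cong (Q(R)_n)^{*}$ for all $n\geq 1$. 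In $Vec_K$ this is the elementary observation that a functional of positive degree is primitive for the transposed comultiplication exactly when it annihilates $R_{+}\cdot R_{+}$; in the categorical setting it is a formal consequence of the Hopf algebra axioms and the behaviour of duals, proved by a diagram chase with the structure morphisms and requiring no fiber functor. \emph{This formal verification is the one genuinely technical step.} Granting it, the two conditions ``$R$ is generated in degree one'' (i.e.\ $Q(R)$ is concentrated in degree one) and ``$\Pp(S')$ is concentrated in degree one'' are equivalent, and symmetrically with the roles of $R$ and $S'$ interchanged, since $R$ is (via finiteness of the homogeneous components) the graded dual of $S'$.

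Applying this with $R=\B(V)$ and $S'=S$: condition (4) for $\B(V)$ states that $\Pp(\B(V))=V=\B(V)_1$ is concentrated in degree one, hence $Q(S)$ is concentrated in degree one, i.e.\ $S$ is generated by $S_1\cong V^{*}$, which is condition (3) for $S$. Condition (3) for $\B(V)$ states that $\B(V)$ is generated in degree one, i.e.\ $Q(\B(V))$ is concentrated in degree one, hence $\Pp(S)$ is concentrated in degree one; and since $\Pp(S)_1\cong(Q(\B(V))_1)^{*}=(\B(V)_1)^{*}\cong V^{*}\cong S_1$, we get $\Pp(S)=V^{*}$, which is condition (4) for $S$. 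By the uniqueness in Definition \ref{def:Nichols} we conclude $S\cong\B(V^{*})$, proving the first assertion.

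For the last assertion, the graded dual has the same Hilbert series as the original algebra, so $\B(V)$ is finite dimensional (all but finitely many $\B(V)_n$ vanish) if and only if the same holds for its graded dual $\B(V^{*})$. Applying this equivalence to $V^{*}$ in place of $V$, and using that in a fusion category $V^{**}\cong V$ (so that the Nichols algebra $\B(V^{**})$, which depends only on the isomorphism class of its degree one part, is isomorphic to $\B(V)$), gives the converse implication as well.
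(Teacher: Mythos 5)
Your proposal is correct and takes essentially the same route as the paper: both verify the defining properties of Definition \ref{def:Nichols} for the graded dual $S$ by playing primitivity off against decomposability under the graded pairing, with your isomorphism $\Pp(S)_n\cong (Q(\B(V))_n)^*$ (and its mirror image) being exactly the content of the paper's two explicit computations $\langle W,V^{\cdot n}\rangle=0$ for a primitive $W\subseteq S_n$ and, dually, the primitivity of any $W\subseteq \B(V)_n$ annihilated by $(V^*)^{\cdot n}$. The only difference is organizational: you package the key step as a general primitives--indecomposables duality for connected graded Hopf algebras, while the paper carries out the two pairing arguments directly for $\B(V)$.
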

\begin{proof}
Let $S$ be the graded dual of $\B(V)$. It holds that $S_0= (\B(V)_0)^* = \one$ and $S_1 = (\B(V)_1)^* = V^*$. 
We first claim that $\Pp(S) = S_1$. Notice first that $\Pp(S)$ is a graded subobject of $S$. Assume that $S_1=V^*\subsetneq \Pp(S)$. Let $n$ be the minimal integer $>1$ such that $W=\Pp(S)_n\neq 0$. Then 
\begin{equation}\langle W, V^{\cdot n}\rangle = \langle \Delta^{n-1}W,V\ot V\ot \cdots\ot V\rangle = \end{equation}
$$\sum_{i=0}^{n-1}\langle  \epsilon^i\ot W\ot \epsilon^{n-1-i},V^{\ot n}\rangle = 0.$$
We have used here the primitivity of $W$ to express $\Delta^{n-1}$ using $\epsilon$, the fact that the multiplication in $R$ is dual to the comultiplication in $S$ and the fact that $\epsilon(V) = 0$. But the above equation implies that $\langle W, \B(V)_n\rangle = 0$ since $\B(V)$ is generated by $V$. This implies that $W=0$, a contradiction.

We prove now that $S$ is generated by $V^*$. 
Assume that this is not the case. Let $n$ be the minimal integer $>1$ such that $(V^*)^{\cdot n}\subsetneq S_n$. 
Using semisimplicity, we can find a subobject $0\neq W\subseteq V^{\ot n}$ such that $\langle (V^*)^{\cdot n},W\rangle=0$. 
A dual argument to the argument above shows that $\Delta(W)\in S_0\ot S_n \oplus S_n\ot S_0$. This follows from the fact that by the minimality of $n$, it holds that $S_m = (V^*)^{\cdot m}$ for every $m<n$. 
This means that \begin{equation}\langle \Delta(W),S_m\ot S_{n-m}\rangle = \langle \Delta(W),(V^*)^{\cdot m}\ot (V^*)^{\cdot (n-m)}\rangle=\end{equation}
$$\langle W, (V^*)^{\cdot m+n-m}\rangle = \langle W,(V^*)^{\cdot n}\rangle = 0.$$
Since $S_0=\one$ this already implies that $W$ is primitive, which is a contradiction to $\Pp(\B(V))=V$. 
\end{proof}
 
\subsection{Actions of algebraic groups on affine varieties}\label{subsec:alggroups}
We recall the following framework and basic facts about actions of algebraic groups.
Let $\Ga$ be a reductive algebraic group acting algebraically on an affine variety $X$. This means that the map $\Ga\times X\to X$ is given by a polynomial map. 
The following holds (see Section 8.3 in \cite{Humphreys} and Lemma 3.3 in \cite{Newstead})
\begin{proposition}
	All the orbits of $\Ga$ in $X$ are locally closed. For a $\Ga$-orbit $\Ow$ in $X$, it holds that $\overline{\Ow}\backslash \Ow$ is the union of orbits of smaller dimension. In particular, an orbit of minimal dimension in $\overline{\Ow}$ is closed. 
\end{proposition}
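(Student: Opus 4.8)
The plan is to derive both assertions from two classical facts: Chevalley's theorem that the image of a morphism of varieties is constructible, and the lemma that a constructible subset of a variety contains a nonempty subset which is open in its closure. Fix $x\in X$ and consider the orbit morphism $\phi_x\colon\Ga\to X$, $g\mapsto g\cdot x$, whose image is $\Ow=\Ga\cdot x$. By Chevalley's theorem $\Ow$ is constructible, hence it contains a nonempty subset $U$ that is open in $\overline{\Ow}$.

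For the first assertion I would then exploit homogeneity. The closure $\overline{\Ow}$ is $\Ga$-stable, since each $g\in\Ga$ acts on $X$ as a homeomorphism and the closure of a $\Ga$-stable set is $\Ga$-stable. Given any $y\in\Ow$, transitivity of the action provides $u\in U$ and $g\in\Ga$ with $g\cdot u=y$; then $g\cdot U$ is a neighbourhood of $y$ which is open in $\overline{\Ow}$ and contained in $\Ow$. Hence $\Ow$ is a union of subsets open in $\overline{\Ow}$, so $\Ow$ is open in $\overline{\Ow}$, i.e. locally closed.

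For the second assertion, set $F=\overline{\Ow}\setminus\Ow$. Since $\Ow$ is open in $\overline{\Ow}$, the set $F$ is closed in $\overline{\Ow}$, hence in $X$, and it is $\Ga$-stable as a difference of $\Ga$-stable sets; therefore $F$ is a union of $\Ga$-orbits. To bound their dimensions I would pass to the identity component $\Ga^{\circ}$: the orbit $\Ow$ is a finite disjoint union of $\Ga^{\circ}$-orbits $g_i\cdot(\Ga^{\circ}\cdot x)$, all of the same dimension $d=\dim\Ow$, and by the first part applied to $\Ga^{\circ}$ each is locally closed, hence open and dense in the irreducible variety $\overline{g_i\cdot(\Ga^{\circ}\cdot x)}$, which then has dimension $d$. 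Moreover $\overline{\Ow}=\bigcup_i\overline{g_i\cdot(\Ga^{\circ}\cdot x)}$. Any $y\in F$ lies in some $\overline{g_i\cdot(\Ga^{\circ}\cdot x)}$ but not in $g_i\cdot(\Ga^{\circ}\cdot x)$, hence in the proper closed subset $\overline{g_i\cdot(\Ga^{\circ}\cdot x)}\setminus g_i\cdot(\Ga^{\circ}\cdot x)$ of an irreducible $d$-dimensional variety, which has dimension $<d$. Thus $\dim F<d$, and every orbit contained in $F$ has dimension at most $\dim F<\dim\Ow$.

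The main point requiring attention is not conceptual but bookkeeping: since $\Ga=\Aut_{\C}(B)$ need not be connected, $\overline{\Ow}$ need not be irreducible, which is why the dimension count is run through the $\Ga^{\circ}$-orbits rather than directly on $\overline{\Ow}$. Otherwise there is no real obstacle; the statement is standard and is precisely what the cited references establish. One may also note that reductivity of $\Ga$ plays no role in this proposition — it is needed only for the deeper invariant-theoretic facts used later in the paper.
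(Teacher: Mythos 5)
Your proof is correct and complete. The paper does not prove this proposition itself but cites Humphreys (Section 8.3) and Newstead (Lemma 3.3), and your argument --- Chevalley constructibility plus homogeneity for local closedness, then the dimension count run through the irreducible $\Ga^{\circ}$-orbits to handle possible disconnectedness of $\Aut_{\C}(B)$ --- is precisely the standard argument those references give, so there is nothing to add.
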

\begin{proposition}\label{prop:sepinvs} If $W_1$ and $W_2$ are two closed disjoint $\Ga$-stable subsets of $X$, then there is an element $f\in K[X]^{\Ga}$ such that $f(W_1) = 1$ and $f(W_2)=0$. 
In other words- we can separate the subsets $W_1$ and $W_2$ by an invariant polynomial.
\end{proposition}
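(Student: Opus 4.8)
The plan is to reduce the statement to the fact that $\Ga$, being reductive over a field of characteristic zero, is \emph{linearly reductive}, and then to exploit the resulting Reynolds operator on its coordinate rings. The first step ignores the group action: since $X$ is affine and $W_1\cap W_2=\emptyset$, the ideals $I(W_1),I(W_2)\subseteq K[X]$ of regular functions vanishing on $W_1$ and $W_2$ have empty common zero locus, so by Hilbert's Nullstellensatz $I(W_1)+I(W_2)=K[X]$. Choosing $a_i\in I(W_i)$ with $a_1+a_2=1$ and putting $h:=a_2$, we obtain $h\in K[X]$ with $h|_{W_1}=1$ and $h|_{W_2}=0$; equivalently, $h-1\in I(W_1)$ and $h\in I(W_2)$. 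It then remains only to replace $h$ by a $\Ga$-invariant function without spoiling these two membership relations.

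For that I would use that $K[X]$, viewed as the union of its finite dimensional $\Ga$-submodules, is a semisimple rational $\Ga$-module. Let $M\subseteq K[X]$ be the sum of all nontrivial isotypic components, so that $K[X]=K[X]^{\Ga}\oplus M$ canonically, and let $\mathcal{R}\colon K[X]\to K[X]^{\Ga}$ be the projection along $M$ (the Reynolds operator), which is $K$-linear and fixes the invariant element $1$. Since $W_i$ is closed and $\Ga$-stable, its ideal $I(W_i)$ is a $\Ga$-stable ideal, hence a $\Ga$-submodule of the semisimple module $K[X]$, and therefore decomposes as the direct sum of its intersections with the isotypic components; in particular $\mathcal{R}\bigl(I(W_i)\bigr)=I(W_i)\cap K[X]^{\Ga}\subseteq I(W_i)$.

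With this in hand the proof finishes quickly. Set $f:=\mathcal{R}(h)\in K[X]^{\Ga}$. Because $\mathcal{R}$ is linear and $\mathcal{R}(1)=1$, we have $f-1=\mathcal{R}(h)-\mathcal{R}(1)=\mathcal{R}(h-1)\in I(W_1)$ and $f=\mathcal{R}(h)\in I(W_2)$, so $f|_{W_1}=1$ and $f|_{W_2}=0$, which is exactly the assertion. (Note that for this route one does not even need the $K[X]^{\Ga}$-linearity of $\mathcal{R}$, only that it is a linear idempotent fixing $1$ and preserving $\Ga$-stable ideals.)

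I expect the only substantive ingredient to be the semisimplicity of $K[X]$ as a rational $\Ga$-module, equivalently the existence of the Reynolds operator: this is precisely where reductivity of $\Ga$ together with $\mathrm{char}\,K=0$ is used, and it is the one point that deserves either a short argument via the isotypic decomposition of rational representations or a direct citation to a standard geometric invariant theory reference (e.g.\ in the spirit of the references already quoted before Proposition~\ref{prop:sepinvs}). Granting it, everything else is the elementary commutative algebra of affine varieties, together with the remark that the ideal of a $\Ga$-stable closed subset is $\Ga$-stable.
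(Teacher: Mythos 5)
Your proof is correct: the Nullstellensatz step produces $h\in K[X]$ with $h-1\in I(W_1)$ and $h\in I(W_2)$, and the Reynolds operator $\mathcal{R}$ (which exists because a reductive group in characteristic zero is linearly reductive, so $K[X]$ is a semisimple rational $\Ga$-module and the invariants split off canonically) fixes $1$ and maps each $\Ga$-stable ideal $I(W_i)$ into itself, so $f=\mathcal{R}(h)$ has the required properties. The paper does not give its own proof of this proposition --- it is recalled as a standard fact of geometric invariant theory with a pointer to the literature (Newstead, Lemma 3.3) --- and your argument is exactly the standard one found there, so you have simply supplied the omitted details, correctly.
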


\end{section}

\begin{section}{Finite dimensional Hopf algebras in braided fusion categories}\label{sec:hopffusion}
Let $H$ be a Hopf algebra in a braided fusion category $\C$. We will use here of the coradical filtration of $H$. 
It will be easier to define this filtration using the radical filtration of the dual algebra (see also  Chapter IX of \cite{Sweedler} and \cite{AS98} for the case where $H$ is a Hopf algebra in $Vec_K$).
\begin{itemize}
\item We define the radical $J$ of $H^*$ as \begin{equation}\cap_{M} \Ker(H\to M\ot M^*),\end{equation} where the intersection is taken over all simple $H$-modules $M$ in $\C$, and where $H\to M\ot M^*$ is adjoint to the action map $H\ot M\to M$. Just like in the case of finite dimensional algebras over a field, the ideal $J$ is nilpotent. 
\item We define $H_n = (H^*/J^{n-1})^*\subseteq (H^*)^*\cong H$. 
\end{itemize}
An equivalent definition of $H_n$ is given inductively by \begin{equation}H_n =  \Ker (H\stackrel{\Delta}{\to} H\ot H\to (H/H_{n-1})\ot (H/H_0),\end{equation} where $H_0$ is the coradical of $H$, defined as the sum of all simple subcoalgebras of $H$.
The fact that $J^a\cdot J^b = J^{a+b}$ translates to the dual property $\Delta(H_n)\subseteq \sum_{a+b\leq n} H_a\ot H_b$. 
We say that $H^*$ satisfies the Chevalley property (and that $H$ satisfies the dual Chevalley property) if the tensor product of semisimple $H^*$-modules (in $\C$) is again semisimple. By considering the action of the radical $J$, this is equivalent to saying that $\Delta(J)\subseteq J\ot H^* + H^*\ot J$.  

When $H$ satisfies the dual Chevalley property we get that $\Delta(J^n)\subseteq \sum_{a+b\leq n} J^a\ot J^b$ by using the Hopf axiom. 
Dualising, this implies that $H_a\cdot H_b\subseteq H_{a+b}$, and the coradical filtration thus gives us a Hopf filtration on $H$. 
This means that the associated graded object \begin{equation}\text{gr}H := \oplus H_n/H_{n-1}\end{equation} is a graded Hopf algebra. We will say that $H$ is \textit{coradically graded} if $H\cong \text{gr}H$ as Hopf algebras. The grading gives a split surjection $\pi:\text{gr}H\to H_0$ of Hopf algebras. 
Using the process of Bosonization (or Radford-Majid biproduct) one can also write this algebra in the form 
$\text{gr}H = R\# H_0$ where $R$ is a graded Hopf algebra in the category of Yetter-Drinfeld modules over $H_0$.
As a vector space \begin{equation}R=\{r\in \text{gr}H| (1\ot \pi)\Delta(r) = r\ot 1\in \text{gr}H\ot H_0\}\end{equation} and $R_0=\one$. 
See \cite{AS10} for the description of $R$ as a Hopf algebra in $_{H_0}^{H_0}\YD$. 

The following lemma is the first step in proving Theorem \ref{thm:main2}:
\begin{lemma}\label{lem:genHgenR}
Let $A$ be a finite dimensional semisimple Hopf algebra. 
The following conditions are equivalent:
\begin{enumerate}
\item Every finite dimensional Hopf algebra $H$ in which the coradical $H_0$ is isomorphic to $A$ is generated by the zeroth and first levels of its coradical filtration.
\item Every coradically graded finite dimensional Hopf algebra $H$ in which the coradical $H_0$ is isomorphic to $A$ is generated by the zeroth and first levels of its coradical filtration.
\item Every coradically graded finite dimensional Hopf algebra $R\in ^A_A\YD$ in which $R_0=\one$ is generated by its primitive elements.
\end{enumerate}
\end{lemma}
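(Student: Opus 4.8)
The plan is to prove $(1)\Leftrightarrow(2)$ by comparing a Hopf algebra with its associated graded, and $(2)\Leftrightarrow(3)$ via the Radford biproduct, so that $(2)$ is the real pivot: $(1)$ is its ``ungraded'' incarnation and $(3)$ its ``Yetter--Drinfeld'' incarnation.

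\emph{The implications $(1)\Leftrightarrow(2)$.} The direction $(1)\Rightarrow(2)$ is immediate, since a coradically graded Hopf algebra with coradical $\cong A$ is in particular a finite dimensional Hopf algebra with coradical $\cong A$. For $(2)\Rightarrow(1)$ I would take an arbitrary finite dimensional $H$ whose coradical $H_0$ is a Hopf subalgebra isomorphic to $A$. As $H$ is finite dimensional, ``$H_0$ is a Hopf subalgebra'' is equivalent to $H$ having the dual Chevalley property, so (as recalled in this section) the coradical filtration of $H$ is a Hopf algebra filtration and $H_{gr}$ is a coradically graded Hopf algebra with $(H_{gr})_0=H_0\cong A$; applying $(2)$ to $H_{gr}$ shows $H_{gr}$ is generated by $(H_{gr})_0\oplus(H_{gr})_1$. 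I would then finish with the classical descent: let $H'$ be the subalgebra of $H$ generated by $H_1$. Since $H_1$ is a subcoalgebra stable under the antipode, $H'$ is a Hopf subalgebra, and its coradical filtration is $H'_n=H'\cap H_n$ (a standard property of coradical filtrations of Hopf subalgebras, \cite{Sweedler}). Hence $\mathrm{gr}\,H'$ is a graded Hopf subalgebra of $H_{gr}$ containing $(H_{gr})_0\oplus(H_{gr})_1$; as those generate $H_{gr}$ we get $\mathrm{gr}\,H'=H_{gr}$, so $\dim H'=\dim H_{gr}=\dim H$ and $H'=H$. Since $H_0\subseteq H_1\subseteq H'$, this is exactly the assertion of $(1)$.

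\emph{The equivalence $(2)\Leftrightarrow(3)$.} Here I would set up a dimension-preserving bijection, up to isomorphism, between the Hopf algebras of $(2)$ and those of $(3)$. If $H$ is coradically graded, finite dimensional, with $H_0\cong A$, then $H^{(0)}=H_0$ is a Hopf subalgebra, the projection $\pi\colon H\to H_0$ annihilating positive degrees is a Hopf algebra retraction of the inclusion, and by Radford's biproduct theorem $H\cong H_0\# R$ with $R=\{r\in H:(1\otimes\pi)\Delta(r)=r\otimes1\}$ a graded Hopf algebra in ${}_A^A\YD$ and $R_0=\one$; conversely $H_0\# R$ is finite dimensional, coradically graded, with coradical $\cong A$. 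Then I would invoke two facts from the analysis of \cite{AS10,AS00}: (i) the coradical filtration of $H_0\# R$ is $H_0\# R_{(n)}$, where $R_{(n)}$ is the $n$-th coradical filtration term of $R$ inside ${}_A^A\YD$, so comparison with the grading of $H$ shows $H$ is coradically graded iff $R$ is; and (ii) $1\# R$ is a subalgebra of $H_0\# R$ with $A\cdot R^{(1)}\subseteq R^{(1)}$, so the subalgebra of $H$ generated by $H_0$ and $H_1=H_0\#(\one\oplus R^{(1)})$ is $H_0\# R'$ with $R'$ the subalgebra of $R$ generated by $R^{(1)}$; hence $H$ is generated by its first two coradical levels iff $R$ is generated by $R^{(1)}$. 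Finally, for a coradically graded $R$ with $R_0=\one$ one has $\Pp(R)=R^{(1)}$ --- a degree-$1$ element is primitive by the grading, and a primitive element lies in coradical degree $1$, hence in grading degrees $\leq1$, with zero degree-$0$ component since $\epsilon$ vanishes on primitives --- so ``$R$ generated by $R^{(1)}$'' is the same as ``$R$ generated by $\Pp(R)$'', which is $(3)$, and the equivalence follows.

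\emph{Expected main obstacle.} The two genuinely structural inputs --- compatibility of the coradical filtration with passing to a Hopf subalgebra (used in $(2)\Rightarrow(1)$) and with the biproduct $R\mapsto H_0\# R$ (used in $(2)\Leftrightarrow(3)$) --- are known (\cite{Sweedler,AS10}), so the remaining work is the dimension count and bookkeeping. The step I expect to require the most care is the bookkeeping on the $R$-side: one must check that ``generated by the zeroth and first coradical levels'' corresponds to generation by $\Pp(R)=R^{(1)}$ and not by the a priori larger space $R_{(1)}=\one\oplus R^{(1)}$, and that $\mathrm{gr}$ and the biproduct really do intertwine these operations; once this is pinned down I do not foresee a serious difficulty.
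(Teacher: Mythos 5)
Your proposal is correct and takes essentially the same route as the paper: $(1)\Leftrightarrow(2)$ via passage to the associated graded Hopf algebra (your explicit ``descent'' argument with the subalgebra generated by $H_1$ is exactly the content of Lemma 2.2 of \cite{AS98}, which the paper simply cites), and $(2)\Leftrightarrow(3)$ via the Radford biproduct $H\cong H_0\# R$. The extra bookkeeping you supply on the $R$-side, identifying $\Pp(R)$ with the degree-one component, is correct and is left implicit in the paper's proof.
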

\begin{proof}
The first condition clearly implies the second one. On the other hand, if the second condition holds and $H$ is a Hopf algebra such that $H_0\cong A$, we can pass to the associated graded Hopf algebra $\text{gr}H$. Since this Hopf algebra is coradically graded the second condition implies that it is generated by its zeroth and first terms of the coradical filtration, and the same thus holds also for $H$ (see Lemma 2.2. in \cite{AS98}).

We next prove that the second and third conditions are equivalent. 
Indeed, if $H$ is coradically graded then the above discussion implies that $H\cong R\#H_0$ where $R$ is a coradically graded Hopf algebra in $_A^A\YD$. It then holds that $H$ is generated by its first and zeroth terms of its coradical filtration if and only if the same holds for $R$. But this is equivalent to $R$ being generated by its primitive elements.
\end{proof}
\begin{lemma}\label{lem:pgen} (see also Lemma 5.5 in \cite{AS00})
Assume that $R$ is a Hopf algebra inside a braided fusion category $\C$.
If $R$ is generated by $\Pp(R)$, and $R^*$ is generated by $\Pp(R^*)$, then $R$ is isomorphic to $\B(\Pp(R))$ (that is: $R$ is a Nichols algebra).
\end{lemma}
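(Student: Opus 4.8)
The plan is to verify that $R$ satisfies the four defining conditions of the Nichols algebra $\B(\Pp(R))$ from Definition \ref{def:Nichols}, after first endowing $R$ with an appropriate $\N$-grading. Set $V = \Pp(R)$. Since $R$ is generated as an algebra by $V$, there is a surjective Hopf algebra map $T(V) \to R$ sending degree-one generators to $V$; however, $R$ need not be graded a priori, so the first step is to produce a grading. I would use the coradical filtration of $R$ (equivalently, since $R$ is connected coconnected, the dual of the Jacobson-radical filtration of $R^*$). Because $R_0 = \one$, the filtration $\one = R_0 \subseteq R_1 \subseteq \cdots$ is exhaustive, a Hopf algebra filtration, and the associated graded $R_{gr} = \bigoplus_n R_n/R_{n-1}$ is a graded Hopf algebra in $\C$ with $(R_{gr})_0 = \one$. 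One then checks that $R$ generated by $\Pp(R)$ forces $R_{gr}$ to be generated in degree one (this is the standard fact, cf. Lemma 2.2 of \cite{AS98} used already in Lemma \ref{lem:genHgenR}), so that $(R_{gr})_1 = V$ and $R_{gr}$ is generated by $(R_{gr})_1$. Thus $R_{gr}$ is a pre-Nichols algebra of $V$.

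The heart of the argument is then to show that the canonical surjection $R_{gr} \to \B(V)$ is an isomorphism, equivalently that $\Pp(R_{gr}) = (R_{gr})_1 = V$ (no new primitives appear in higher degrees). Here is where I would bring in the hypothesis on $R^*$. The dual statement: $R^*$ is generated by $\Pp(R^*)$, so by the same reasoning $(R^*)_{gr}$ is a pre-Nichols algebra of $\Pp(R^*)$, and one identifies $(R^*)_{gr}$ with the graded dual of $R_{gr}$. Now $R_{gr}$ being generated in degree one means its graded dual has primitives only in degree one — this is precisely the implication established inside the proof of Lemma \ref{lem:nicholsdual} (the argument "$S$ generated by $V^*$ $\Rightarrow$ $\Pp(S) = V^*$", and its dual). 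So: $R_{gr}$ generated in degree one gives $\Pp((R_{gr})^{\vee}) = $ degree one; applying this to $(R^*)_{gr} = (R_{gr})^{\vee}$, which is generated in degree one by the dual hypothesis, and dualizing once more, we conclude $\Pp(R_{gr})$ is concentrated in degree one. Combined with $(R_{gr})_0 = \one$, $(R_{gr})_1 = V$, generation by $V$, and $\Pp(R_{gr}) = V$, the four axioms of Definition \ref{def:Nichols} hold, so $R_{gr} \cong \B(V)$.

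Finally I would deduce $R \cong \B(V)$ from $R_{gr} \cong \B(V)$. In general a Hopf algebra need not be isomorphic to its coradically graded version, but here a dimension/Hilbert-series count closes the gap: $\dim_{\C} R = \dim_{\C} R_{gr} = \dim_{\C} \B(V)$ (with the appropriate meaning of $\dim_{\C}$ in a fusion category, or working with the $\N$-graded object in $\mathrm{Ind}(\C)$), and the surjection $T(V) \to R$ factors through $\B(V)$ because the elements of $V$ are primitive in $R$ and $\B(V)$ is the "largest" such quotient of $T(V)$ with primitives exactly $V$; comparing dimensions forces $R \cong \B(V)$.

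The main obstacle I anticipate is the passage from the graded statement to the ungraded one, i.e. showing $R \cong \B(V)$ and not merely $R_{gr} \cong \B(V)$ — making the dimension-counting argument rigorous in a general fusion category (where "dimension" must be replaced by a Grothendieck-group invariant or by working degreewise in $\mathrm{Ind}(\C)$) requires some care. A secondary technical point is justifying that $(R^*)_{gr}$ is the graded dual of $R_{gr}$, which rests on the coradical filtration of $R^*$ being dual to the radical filtration of $R$; this is standard but should be stated cleanly for objects of a braided fusion category rather than quoted only for Yetter–Drinfeld modules.
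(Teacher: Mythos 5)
There is a genuine gap, and it sits exactly where you flagged your ``main obstacle'': the descent from $R_{gr}\cong\B(V)$ to $R\cong\B(V)$. Your justification --- that $T(V)\to R$ factors through $\B(V)$ ``because the elements of $V$ are primitive in $R$ and $\B(V)$ is the largest such quotient of $T(V)$'' --- is not correct. $T(V)$ itself has $V$ primitive; $\B(V)$ is characterized among \emph{graded} quotients by having no primitives in degree $>1$, and that characterization does not transfer to the ungraded $R$. Concretely, the kernel of $T(V)\to\B(V)$ is generated (iteratively) by primitive subobjects $U\subseteq T(V)$ of degree $n>1$; the image $\pi(U)\subseteq R$ is again primitive, hence lands in $\Pp(R)=V$, but since $R$ carries no grading there is no degree reason for $\pi(U)$ to vanish. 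Proving $\pi(U)=0$ is the actual content of the lemma, and it is precisely where the hypothesis on $R^*$ must be used: writing $W=\Pp(R^*)$, one has $\langle W,\pi(U)\rangle\subseteq\langle W,V^{\cdot n}\rangle=0$ because $W$ is primitive and $\epsilon(V)=0$; then primitivity of $\pi(U)$ gives $\langle W^{\cdot m},\pi(U)\rangle=0$ for all $m$, and since $W$ generates $R^*$ this forces $\pi(U)=0$. This is how the paper argues, working directly with $T(V)\to R$ and never passing to the associated graded; your route produces only $R_{gr}\cong\B(V)$, i.e.\ that $R$ is a lifting of $\B(V)$, and the whole point of the surrounding theory is that a lifting need not be isomorphic to its associated graded. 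A dimension count cannot close this gap without the factorization, and the factorization is the theorem.

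A secondary weak point: you assert that $R$ being generated by $\Pp(R)$ forces $R_{gr}$ to be generated in degree one, citing the fact behind Lemma~\ref{lem:genHgenR}. But the direction actually proved in \cite{AS98} (Lemma 2.2) and used in this paper is the converse (``$\mathrm{gr}$ generated in degree $\le 1$ implies $H$ generated by $H_1$''). The direction you need is not automatic: an element of $R_n$ may be a sum of products of many elements of $R_1$, no individual one of which lies in $R_n$, so $R_n=\sum_{k\le n}R_1^{\cdot k}\cap R_n$ requires an argument. This is likely repairable (e.g.\ by using the filtration $F_n=\sum_{k\le n}V^{\cdot k}$ instead of the coradical filtration), but as written it is another unproven step. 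The cleaner path is the paper's: skip the associated graded and run the pairing argument on $T(V)\to R$ directly.
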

\begin{remark} This lemma also holds if one replaces the braided fusion category with a finite braided tensor category.
\end{remark}
\begin{proof}
Write $\Pp(R) = V$ and $\Pp(R^*) = W$.
Write $\pi:T(V)\to R$ for the resulting surjective Hopf-algebra map in $\text{Ind}(\C)$.
Let $p:T(V)\to\B(V)$ be the canonical surjection. 
We will show that $\pi$ splits via $p$. 

Assume that $U\subseteq T(V)$ is a primitive subobject of degree $n>1$ (that is: $U\subseteq \Pp(T(V))$). We will show that $\pi(U)=0$.
The primitivity of $W$ implies that 
\begin{equation}\langle W,V^{\cdot n}\rangle = \langle \sum_{i=0}^{n-1}\epsilon^i\ot W\ot \epsilon^{n-1-i},V^{\ot n}\rangle = 0.\end{equation}
So in particular $\langle W,\pi(U)\rangle = 0$. Using now the primitivity of $U$ (which also implies the primitivity of $\pi(U)$, since $\pi$ is a Hopf-algebra morphism) we get 
\begin{equation}\langle W^{\cdot m},\pi(U)\rangle = \langle W^{\ot m},\sum_{i=0}^{m-1}1^i\ot \pi(U)\ot 1^{m-i-1}\rangle=0\end{equation}
for every $m$. But this implies that $\pi(U)$ is perpendicular to the subalgebra of $R^*$ generated by $W$, which is $R^*$ itself.
This means that $\pi(U)=0$, so $U\subseteq \Ker(\pi)$.  

This implies that the ideal $I_1\subseteq T(V)$ generated by primitive elements of degree $>1$ is contained in $\Ker(\pi)$.
We thus get a surjective Hopf algebra map $\pi_1:T(V)/I_1\to R$. Denote by $I_2$ the ideal of $T(V)/I_1$ generated by primitive elements of degree $>1$ in this algebra. By the same argument, $I_2\subseteq \Ker(\pi_1)$. We define now inductively ideals $I_n$ and Hopf algebra surjections $\pi_n:T(V)/I_n\to R$ such that $I_n\subseteq T(V)/I_{n-1}$ is the ideal generated by all primitive elements of degree $>1$. This is the same as the chain of ideals which appears after Definition \ref{def:Nichols}.
The union of the inverse images of the ideals $I_n$ inside $T(V)$ is exactly the kernel of the surjection $p:T(V)\to \B(V)$. We thus get a surjective map 
\begin{equation}\wb{\pi}:\B(V)\to R\end{equation} which is injective on $V$. 
This map must be injective as well, due to the following reason: assume that $n$ is the minimal number such that $A:=\bigoplus_{i\leq n} \B(V)\cap \Ker(\wb{\pi})\neq 0$. Then by the fact that $\Ker(\wb{\pi})$ is a Hopf ideal and by considering the grading we get $\Delta(A)\subseteq A\ot \B(V) + \B(V)\ot A$ and therefore $\Delta^{+}(A)\in A\ot \B(V) + \B(V)\ot A$ as well, where $\Delta^{+}$ is defined to be zero on $\one$ and $\Delta-1\ot\text{Id}-\text{Id}\ot 1$ on $\Ker(\epsilon)$. It holds that \begin{equation}\Delta^{+}(A)\subseteq \bigoplus_{i+j\leq n, (i,j)\neq (0,n),(n,0)}\B(V)_i\ot \B(V)_j.\end{equation}
By the minimality of $n$, $\pi$ is injective on $\bigoplus_{i<n}\B(V)_i$, and so we get that $\Delta^+(A)=0$, but this contradicts the fact that all the primitive elements of $\B(V)$ are concentrated in degree 1. 
\end{proof}

\begin{lemma}\label{lem:pairing}
Let $R$ be a ccc Hopf algebra. 
The restriction of the pairing $R\ot R^*\to \one$ to $\Pp(R)\ot \Pp(R^*)$ is non-degenerate if and only if $R\cong \B(\Pp(R))$.
\end{lemma}
\begin{proof}
One direction follows from the fact that the graded dual of $\B(V)$ is $\B(V^*)$, see Lemma \ref{lem:nicholsdual}. 
Assume, on the other hand, that the pairing is non-degenerate.
By the previous lemma, it will be enough to prove that $R$ is generated by $V=\Pp(R)$.  
By a dual argument, $R^*$ is  generated by $W=\Pp(R^*)$, and we can use the Lemma \ref{lem:pgen} to finish the proof.

Write $J=\Ker(\epsilon)\subseteq R$. Consider the surjective morphism $R\to R/J^2$. By taking duals we get an injective morphism $(R/J^2)^*\to R^*$. 
We claim that $(R/J^2)^*=\Im(\epsilon)\oplus W$. 
The inclusion $\Im(\epsilon)\oplus W\subseteq (R/J^2)^*$ follows from the fact that $\epsilon(J^2)=0$ and that $\langle W,J\cdot J\rangle = \langle \Delta(W),J\ot J\rangle \subseteq
\langle W\ot \Im(\epsilon) + \Im(\epsilon)\ot W,J\ot J\rangle = 0$. 
In the other direction, the fact that $\epsilon(u)=1$ enables us to write $(R/J^2)^*\cong \Im(\epsilon)\oplus Q$ where $Q(\Im(u))=0$. We claim that $Q$ is contained in $W$. For this, let us write $R= \Im(u)\oplus J$. We will show that the morphism \begin{equation}\Delta- \text{Id}\ot\epsilon - \epsilon\ot\text{Id}:Q\to R^*\ot R^*\end{equation} vanishes (where we consider here $\epsilon$ as a morphism $\one\to R^*$). We will do so by showing that the evaluation of the image of this map on $R\ot R$ is zero. 

We write \begin{equation}R\ot R = \Im(u)\ot \Im(u) \oplus J\ot\Im(u)\oplus \Im(u)\ot J\oplus J\ot J.\end{equation} Since $\Im(u)\cdot\Im(u)=\Im(u)$, it holds that $\langle \Delta(Q),\Im(u)\ot\Im(u)\rangle = \langle Q,\Im(u)\rangle=0$. Similarly. we can show that the pairings of $Q\ot \Im(\epsilon)$ and $\Im(\epsilon)\ot Q$ with $\Im(u)\ot \Im(u)$ vanish. For $J\ot J$ we have that $\langle \Delta(Q),J\ot J\rangle = \langle Q,J\cdot J\rangle=0$ because $Q\subseteq (R/J^2)^*$ It also holds that $\langle (\text{Id}\ot\epsilon)(Q),J\ot J\rangle=\langle (\epsilon\ot\text{Id})(Q),J\ot J\rangle=0$, so the pairing of the image of the above morphism with that summand vanishes as well. 
For $\Im(u)\ot J$ we have $\langle \Delta(Q),\Im(u)\ot J\rangle = \langle Q,J\rangle$, 
$\langle (\epsilon\ot\text{Id})(Q),\Im(u)\ot J\rangle = \epsilon(\Im(u))\langle Q,J\rangle = \langle Q,J\rangle$, and   
$\langle (\text{Id}\ot\epsilon)(Q),\Im(u)\ot J\rangle = \langle Q,\Im(u)\rangle\langle \epsilon,J\rangle= 0$. The case for $J\ot \Im(u)$ is similar. 

So we know that $(R/J^2)^*=\Im(\epsilon)\oplus W$. Assume that the pairing $V\ot W\to \one$ is non-degenerate. Consider the image $\wb{V}$ of $V$ in $J/J^2$. We claim that $\wb{V}=J/J^2$. 
For this, we use the fact that $R/J^2 = \Im(u)\oplus J/J^2$ and $(R/J^2)^* = \Im(\epsilon)\oplus W$. If $\wb{V}$ is a proper sub-object of $J/J^2$, then by semisimplicity there is a proper subobject $A\subseteq W$ such that $\langle A,\wb{V}\rangle = \langle A,V\rangle=0$. But this contradicts the fact that the pairing $V\ot W\to \one$ is non-degenerate. 

Finally, there is a version of Nakayama's Lemma that holds here. The fact that the image of $V$ spans $J/J^2$ implies that $V$ generates $R$. Indeed, we can prove by induction that $V$ generates $R/J^n$ for every $n\geq 2$. If $V$ generates $R/J^n$ then in particular it generates $J^{n-1}/J^n$. Since the multiplication induces a surjective morphism $J^{n-1}/J^n\ot J/J^2\to J^n/J^{n+1}$, we see that $V$ generates $R/J^{n+1}$ as well. Since $J$ is a nilpotent ideal we are done.
\end{proof}

We are now ready to prove the equivalence of the third and fourth conditions of \ref{thm:main2}. 
\begin{proof}
Lemma \ref{lem:genHgenR} and Lemma \ref{lem:pgen} show that the third condition of Theorem \ref{thm:main2} is equivalent to the statement that all finite dimensional coradically graded Hopf algebras $R\in ^A_A\YD$ with $R_0=\one$ are Nichols algebras. 
If the fourth condition of Theorem \ref{thm:main2} holds and all the ccc Hopf algebras in $^A_A\YD$ are Nichols algebras, this is in particular true for coradically graded Hopf algebras with $R_0=\one$, and the fourth condition thus implies the third condition. 

If on the other hand the third condition holds and $R$ is a ccc Hopf algebra in $^A_A\YD$ then the Hopf algebra $\text{gr}R$ arising from the coradical filtration of $R$ is a Nichols algebra. This implies that $R$ is generated by its primitive elements. Similarly, the dual $R^*$ is also generated by its primitive elements, and by Lemma \ref{lem:pgen} $R$ is a Nichols algebra. 
\end{proof}
\end{section}

\begin{section}{Braided fusion categories by linear algebra}\label{sec:catslinear}
Let $\C$ be a braided fusion category.
Write $X_1,\ldots X_s$ for a set of representatives of the isomorphism classes of simple objects of $\C$. We would like to describe all the data encoded in the structure of $\C$ as a braided fusion category using vector spaces and linear maps.
We begin with the hom-spaces, which are very easy to describe. Indeed, it holds that 
\begin{equation}\label{eq:simples}\Hom_{\C}(X_i,X_j) = \begin{cases} 0 & \text{ if } i\neq j \\
									K\cdot Id_{X_i} & \text{ if } i=j \end{cases}
								\end{equation}
since $\C$ is a semisimple category, and the $X_i$ are non-isomorphic simple objects. Every object of $\C$ is isomorphic to a direct sum of simple objects.
Instead of writing an object of $\C$ as $\bigoplus_i X_i^{\oplus a_i}$ we will use the isomorphic object \begin{equation}\bigoplus_i U_i\ot X_i\end{equation} where $U_i$ are plain vector spaces. Notice that the fact that $\C$ is a $K$-linear category means that taking tensor products of objects in $\C$ with vector spaces makes sense.

The hom-spaces in $\C$ are then given by 
\begin{equation}\label{eq:homs} \Hom_{\C}(\bigoplus _i V_i\ot X_i,\bigoplus_j U_j\ot X_j) = \bigoplus_i \Hom_K(V_i,U_i)\end{equation}
We describe next the tensor product and the associativity constraints.
Assume that \begin{equation}[X_i]\cdot [X_j] = \sum_k N_{i,j}^k [X_k]\end{equation} in the Grothendieck ring of $\C$. For every three indices $i,j,k\in \{1,\ldots s\}$ fix a vector space $V_{i,j}^k$ of dimension $N_{i,j}^k$. We can then write
\begin{equation}X_i\ot X_j \cong  \bigoplus_k V_{i,j}^k\ot  X_k.\end{equation} Notice that 
\begin{equation}\big(\bigoplus_i U_i\ot X_i\big)\ot \big(\bigoplus_j W_j\ot X_j\big)\cong 
\bigoplus_{i,j,k} U_i\ot W_j\ot V_{i,j}^k\ot X_k.\end{equation}
For $i,j,k\in\{1,\ldots s\}$ we then have
\begin{equation}(X_i\ot X_j)\ot X_k\cong \bigoplus_a V_{i,j}^a\ot X_a\ot X_k\cong \bigoplus_{a,b} V_{i,j}^a\ot V_{a,k}^b \ot X_b\end{equation} while on the other hand
\begin{equation} X_i\ot (X_j\ot X_k)\cong \bigoplus_c X_i\ot V_{j,k}^c\ot X_c\cong \bigoplus_{c,b} V_{i,c}^b\ot V_{j,k}^c \ot X_b.\end{equation}
The associativity constraints \begin{equation}\alpha_{i,j,k}:(X_i\ot X_j)\ot X_k\to X_i\ot (X_j\ot X_k)\end{equation} are then given by a family of linear maps 
\begin{equation}\alpha_{i,j,k}^{a,b,c}: V_{i,j}^a\ot V_{a,k}^b\to V_{i,c}^b\ot V_{j,k}^c\end{equation}
that combine to give linear isomorphisms \begin{equation}\bigoplus_a V_{i,j}^a\ot V_{a,k}^b\stackrel{\cong}{\to} \bigoplus_c V_{i,c}^b\ot V_{j,k}^c\end{equation} for every $i,j,k,b$. 
The Pentagon axiom then translates into a list of axioms that says that 
certain sums of compositions of the linear maps $\alpha_{i,j,k}^{a,b,c}$ are equal. More precisely, for every $i,j,k,l\in \{1,\ldots s\}$ writing the pentagon diagram for the tensor product of $X_i\ot X_j\ot X_k\ot X_l$ gives us that for every $a,b,c\in\{1,\ldots s\}$ the composition

$$V_{i,j}^a\ot V_{a,k}^b\ot V_{b,l}^c\stackrel{\alpha_{i,j,k}^{a,b,d}}{\longrightarrow} 
\bigoplus_d V_{i,d}^b\ot V_{j,k}^d\ot V_{b,l}^c\stackrel{\alpha_{i,d,l}^{b,c,e}}{\longrightarrow} $$
\begin{equation}\bigoplus_{d,e}V_{i,e}^c\ot V_{j,k}^d\ot  V_{d,l}^e\stackrel{\alpha_{j,k,l}^{d,e,f}}{\longrightarrow} 
\bigoplus_{e,f} V_{i,e}^c \ot V_{j,f}^c\ot V_{k,l}^f\end{equation} is equal to the composition
\begin{equation} V_{i,j}^a\ot V_{a,k}^b\ot V_{b,l}^c\stackrel{\alpha_{a,k,l}^{b,c,f}}{\longrightarrow}
\bigoplus_f V_{i,j}^a\ot V_{a,f}^c\ot V_{k,l}^f\stackrel{\alpha_{i,j,f}^{a,c,e}}{\longrightarrow}
\bigoplus_{f,e} V_{i,e}^c\ot V_{j,f}^t\ot V_{k,l}^f.\end{equation}

Assuming that $X_1=\one$ is the tensor unit, the unit axioms for the monoidal category $\C$ can be translated as saying that $V_{1,i}^j$ and $V_{i,1}^j$ are zero if $i\neq j$, are one dimensional in case $i=j$, and that there are distinguished bases $l_i\in V_{1,i}^i$ and $r_i\in V_{i,1}^i$ such that for every $i,j,k\in\{1,\ldots s\}$ it holds that 
\begin{equation}\alpha_{i,1,j}^{i,k,j}:V_{i,1}^i\ot V_{i,j}^k\to V_{i,j}^k\ot V_{1,j}^j\end{equation} sends $r_i\ot v$ to $v\ot l_i$ for $v\in V_{i,j}^k$. 

The rigidity of the category can be described in this language in the following way:
for every $i\in \{1,\ldots s\}$ there is a unique $\bar{i}\in \{1,\ldots s\}$ such that $V_{i,\bar{i}}^1$ and $V_{\bar{i},i}^1$ are one-dimensional, and $V_{i,j}^1=0$ for $j\neq \bar{i}$.
The evaluation \begin{equation}ev_i:X_{\bar{i}}\ot X_i\to X_1\end{equation} is then given by a linear isomorphism which we denote by the same symbol $ev_i:V_{\bar{i},i}\to K$ and the coevaluation $X_1\to X_i\ot X_{\bar{i}}$ is then given by a linear isomorphism $coev_i:K\to V_{i,\bar{i}}$. The rigidity axioms translate again to equality between compositions of linear maps. 
The equality between the composition \begin{equation}X_i\to (X_i\ot X_{\bar{i}})\ot X_i\to X_i\ot(X_{\bar{i}}\ot X_i)\to X_i\end{equation} and $Id_{X_i}$ translates to the equality \begin{equation}K\to V_{i,\bar{i}}^1\ot V_{1,i}^i\to V_{i,1}^i\ot V_{\bar{i},i}^1\to K=K\stackrel{Id_K}{\to} K\end{equation} where the first map sends 1 to $coev_i\ot l_i$, the second map is $\alpha_{i,\bar{i},i}^{1,1,i}$ and the third map sends $r_i\ot v$ to $ev_i(v)\in K$. 
 
The braided structure is given by maps $\sigma_{X_i,X_j}:X_i\ot X_j\to X_j\ot X_i$. This is the same as a collection of linear isomorphisms
\begin{equation}\sigma_{i,j}^k:V_{i,j}^k\to V_{j,i}^k,\end{equation}
which should satisfy the axioms arising from the braid relations.

The introduction of the vector spaces $V_{i,j}^k$ here and the linear maps $\alpha_{i,j,k}^{a,b,c}$ and $\sigma_{i,j}^k$
can be seen as a way to ``introduce coordinates'' on the category $\C$. We use here the fact that as an abelian category, $\C$ is very simple to understand on the level of objects and morphisms. The additional braided monoidal structure is described using linear algebra. This will be used later on in the construction of the variety $X_B$.
This should be seen as more of an auxiliary result, and will not play a dominant role in the sequel.
\end{section}

\begin{section}{The variety $X_B$ and the action of the group}\label{sec:variety}
Usually, when one speaks of ``a Hopf algebra $R$ inside the braided fusion category $\C$'' it is understood that $R$ is an object of $\C$ that is equipped with structure maps which are not written explicitly.
We will take here a different approach. We will fix an object $B$ inside our braided fusion category $\C$, and ask what are \textit{all the possible} Hopf algebra structures one can give on that object. 

A Hopf algebra is given by morphisms $$m: B\ot B\to B,$$ $$u:K\to B,$$ \begin{equation}\Delta:B\to B\ot B\end{equation} $$\epsilon:B\to K \text{ and }$$ $$S:B\to B$$ which satisfy some axioms.
We can thus think of a Hopf algebra as a point in the affine space 
$$\A^N = \Hom_{\C}(B\ot B,B)\oplus\Hom_{\C}(K,B)\oplus\Hom_{\C}(B,B\ot B)$$ \begin{equation} \oplus\Hom_{\C}(B,K)\oplus\Hom_{\C}(B,B).\end{equation}
We will write a point in this space as $(m,u,\Delta,\epsilon,S)$. 
Notice that not all points in this affine space will define Hopf algebra structure, and not all Hopf algebras will be ccc.
We write \begin{equation}X_B\subseteq \A^N\end{equation} for the subset of all points $(m,u,\Delta,\epsilon,S)$ which define a ccc Hopf algebra structure on $B$.
For $t=(m,u,\Delta,\epsilon,S)\in X_B$ we write $(B,t)$ for the Hopf algebra $B$ with structure given by $t$. When we will say ``$R$ is a Hopf algebra in $X_B$'' we will mean that $R = (B,t)$ and $t\in X_B$. We will also write $R\in X_B$.  

Write $\Ga_B=\Aut_{\C}(B)$. When no confusion will arise we will write $\Ga = \Ga_B$. The group $\Gamma$ acts on the different direct summands in $\A^n$ by conjugation. 
The action of $\Gamma$ on $B\ot B$ is the diagonal one, and on $K$ is the trivial one. 
This induces a linear action of $\Ga$ on $\A^N$. We claim the following:
\begin{lemma}
	The action of $\Ga$ on $\A^N$ stabilizes the subset $X_B$. 
	Two points $(m_1,u_1,\Delta_1,\epsilon_1,S_1)$ and $(m_2,u_2,\Delta_2,\epsilon_2,S_2)$ in $X_B$ define isomorphic Hopf algebras 
	if and only if they lie in the same $\Ga$-orbit. The stabilizer of $(m,u,\Delta,\epsilon,S)$ in $\Ga$ can be identified with the group of automorphisms
	of the Hopf algebra that this tuple defines. 
\end{lemma}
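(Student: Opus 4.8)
The plan is to recognise the $\Ga$-action on $\A^N$ as transport of structure along automorphisms of $B$, after which all three assertions become formal. Concretely, $\phi\in\Ga=\Aut_\C(B)$ sends $t=(m,u,\Delta,\epsilon,S)$ to
$$\phi\cdot t=\big(\phi\, m\,(\phi^{-1}\ot\phi^{-1}),\ \phi\, u,\ (\phi\ot\phi)\,\Delta\,\phi^{-1},\ \epsilon\,\phi^{-1},\ \phi\, S\,\phi^{-1}\big),$$
and this formula is precisely the assertion that $\phi$, viewed as a morphism $B\to B$ in $\C$, intertwines the structure maps of $(B,t)$ with those of $(B,\phi\cdot t)$.

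First I would check that $\Ga$ stabilises $X_B$. Every defining condition of a Hopf algebra recalled in Section~\ref{sec:preliminaries} --- associativity and unitality of $(m,u)$, coassociativity and counitality of $(\Delta,\epsilon)$, the bialgebra compatibilities, and the antipode equations --- is the commutativity of a diagram whose arrows are built from the structure maps, identities, tensor factors, the associativity constraints, and the braiding. Conjugating each arrow of such a diagram by the appropriate tensor power of the isomorphism $\phi$ turns a diagram that commutes for $t$ into one that commutes for $\phi\cdot t$; hence $(B,\phi\cdot t)$ is again a Hopf algebra. For the connected and coconnected conditions, note that $\phi$ is in particular an isomorphism of algebras $(B,t)\to(B,\phi\cdot t)$, so it carries the Jacobson radical onto the Jacobson radical and induces an isomorphism on the quotients; thus $(B,t)/J(B,t)\cong\one$ forces $(B,\phi\cdot t)/J(B,\phi\cdot t)\cong\one$, and the coconnected case follows dually (or directly, since $\phi$ is also a coalgebra isomorphism and therefore maps the coradical onto the coradical). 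So $\phi\cdot t\in X_B$.

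Next I would prove the orbit statement in both directions using the displayed formula. If $t_2=\phi\cdot t_1$ with $\phi\in\Ga$, then by construction $\phi$ commutes with all five structure maps, hence is an isomorphism of Hopf algebras $(B,t_1)\to(B,t_2)$. Conversely, any Hopf algebra isomorphism $(B,t_1)\to(B,t_2)$ is an invertible morphism $B\to B$ in $\C$, i.e.\ an element $\psi\in\Ga$, and the requirements that $\psi$ respect multiplication, unit, comultiplication, counit and antipode are exactly the five coordinate identities comprising $\psi\cdot t_1=t_2$. Hence the $\Ga$-orbit of $t$ is the set of tuples defining a Hopf algebra isomorphic to $(B,t)$. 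The stabiliser claim is the special case $t_1=t_2=t$: $\phi$ fixes $t$ iff $\phi$ is a Hopf algebra endomorphism of $(B,t)$, and being invertible in $\C$ it is then an automorphism; conversely every Hopf algebra automorphism of $(B,t)$ lies in $\Ga$ and fixes $t$. This identifies the stabiliser of $t$ with the group of Hopf algebra automorphisms of $(B,t)$.

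I do not expect a real obstacle: the argument is bookkeeping once the action is written down correctly. The only point requiring a moment's care is that the connected and coconnected conditions carving $X_B$ out of $\A^N$ are phrased through the intrinsic notions of Jacobson radical and coradical rather than as manifest polynomial constraints; the observation that algebra (resp.\ coalgebra) isomorphisms preserve these is exactly what makes $X_B$ a $\Ga$-stable subset, and it is also what one needs when later analysing $X_B$ as a variety.
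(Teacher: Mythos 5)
Your proposal is correct and follows essentially the same route as the paper: transport of structure along $\phi\in\Ga$, verification that the Hopf axioms (phrased as commuting diagrams) and the (co)connectedness conditions are preserved under conjugation, and the identification of orbits with isomorphism classes and stabilizers with automorphism groups. The only cosmetic difference is that the paper verifies the associativity axiom explicitly and phrases connectedness via nilpotency of $\Ker(\epsilon)$, whereas you argue uniformly over all diagrams and via preservation of the Jacobson radical; these are the same argument.
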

\begin{proof}
	Write $t_i = (m_i,u_i,\Delta_i,\epsilon_i,S_i)$ for $i=1,2$. 
	The Hopf algebra axioms can be phrased as equalities between certain linear maps. Associativity of the multiplication, for example, is the equality 
	\begin{equation}m(1_B\ot m)\alpha_{B,B,B} = m(m\ot 1_B)\end{equation} as morphisms in $\Hom_{\C}((B\ot B)\ot B,B)$. If $\gamma:B\to B$ is an automorphism in $\C$, and if 
	$\gamma(t_1)=(t_2)$ then we have that
	$$m_2(1_B\ot m_2)\alpha_{B,B,B} = \gamma m_1 (\gamma^{-1}\ot \gamma^{-1})(1_B\ot \gamma m_1 (\gamma^{-1}\ot\gamma^{-1}))\alpha_{B,B,B} = $$
	\begin{equation} \gamma m_1(1_B\ot m_1)\alpha_{B,B,B} (\gamma^{-1}\ot \gamma^{-1}\ot \gamma^{-1})\end{equation} where we used the naturality of $\alpha$ and the definition of the action of $\gamma$. On the other hand
	a similar calculation gives us 
	\begin{equation}m_2(m_2\ot 1_B) = \gamma m_1(m_1\ot 1_B)(\gamma^{-1}\ot\gamma^{-1}\ot\gamma^{-1}).\end{equation}
	This shows that if $\gamma(t_1) = t_2$ then $m_1$ is associative if and only if $m_2$ is associative. 
	Similarly, all the other Hopf algebra axioms are valid for $t_1$ if and only if they are valid for $t_2$.
	So if $\gamma(t_1) = t_2$ then $t_1$ defines a Hopf algebra if and only if $t_2$ does.
	Moreover, $t_1$ will define a ccc Hopf algebra if and only if $\Ker(\epsilon_1)$ is a nilpotent ideal in $B$ with respect to the multiplication $m_1$ 
	and $\Ker(u_1^*)$ is a nilpotent ideal in $B^*$ with respect to the multiplication $\Delta_1^*$. 
	For the same reason as above, this happens if and only if $\Ker(\epsilon_2)$ is a nilpotent ideal with respect to the multiplication $m_2$
	and $\Ker(u_2^*)$ is a nilpotent ideal in $B^*$ with respect to the multiplication $\Delta_2^*$. 
	In other words, $\Gamma$ stabilizes the subset $X_B$ of $\A^N$. 
	 	
	We will think of the equation $\gamma(t_1) = t_2$ as saying that $\gamma$ defines an isomorphism between $(B,t_1)$ and $(B,t_2)$. 
	Indeed, $\gamma m_1 (\gamma^{-1}\ot \gamma^{-1})=m_2$ can be rephrased as saying that the diagram 
	\begin{equation}\xymatrix{ B\ot B\ar[r]^{\gamma\ot \gamma}\ar[d]^{m_1} & B\ot B\ar[d]^{m_2} \\ B\ar[r]^{\gamma} & B}\end{equation} is commutative.
	Similar statements hold for $u,\Delta,\epsilon$ and $S$. This implies that if $\gamma(t_1)=t_2$ then $t_1$ and $t_2$ define isomorphic Hopf algebras.
	On the other hand, if $t_1$ and $t_2$ define isomorphic Hopf algebras on $B$, take an isomorphism $\gamma:(B,t_1)\to (B,t_2)$ between these Hopf algebras.
	Then by the same calculations as above we get that $\gamma(t_1)=t_2$. 
	So the orbits of $\Ga$ in $X_B$ are in one to one correspondence with isomorphism classes of ccc Hopf algebras which are isomorphic to $B$ as an object of $\C$.
	Finally, the equality $\gamma(t) = t$ for $t\in X_B$ just means that $\gamma:(B,t)\to (B,t)$ is an automorphism.
\end{proof}

The rest of this section will be devoted to prove the following claim:
\begin{lemma}
	The subset $X_B$ is an affine sub-variety of $\A^N$. The group $\Ga$ is isomorphic to a direct product of general linear groups,
	and the action of $\Ga$ on $\A^N$ is algebraic.
	\end{lemma}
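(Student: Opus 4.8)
The plan is to exhibit $X_B$ as the common zero set of finitely many polynomials on $\A^N$, and to read off $\Ga_B$ and its action from the linear-algebra presentation of $\C$ in Section~\ref{sec:catslinear}.

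First I would fix a decomposition $B\cong\bigoplus_i U_i\ot X_i$ and, via \eqref{eq:homs}, identify each of the five summands of $\A^N$ with an explicit finite-dimensional space of matrices; composition of morphisms in $\C$ then becomes matrix multiplication contracted against the fixed structure tensors $\alpha^{a,b,c}_{i,j,k}$ and $\sigma^k_{i,j}$, hence is bilinear in these coordinates. Every Hopf-algebra axiom — associativity and the unit axiom for $(m,u)$, coassociativity and the counit axiom for $(\Delta,\epsilon)$, the bialgebra compatibilities, and the two antipode identities — asserts that a morphism assembled \emph{polynomially} from $(m,u,\Delta,\epsilon,S)$ vanishes, and therefore amounts to finitely many polynomial equations in the coordinates. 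Thus the locus of Hopf-algebra structures on $B$ is Zariski-closed in $\A^N$.

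Next I would cut out the ``connected coconnected'' condition. On the Hopf-algebra locus one has $\epsilon u=\mathrm{id}_{\one}$, so $p:=1_B-u\epsilon$ is the idempotent with image the codimension-one ideal $\Ker\epsilon$. Since $J(B)$ is by definition the largest nilpotent ideal and $B$ is not itself nilpotent (indeed $m$ is split surjective by the unit axiom, so $B^n=B$), the Hopf algebra $(B,t)$ is connected precisely when $\Ker\epsilon$ is nilpotent: if $\Ker\epsilon$ is nilpotent it lies in $J(B)\subsetneq B$ and, being maximal, equals $J(B)$, so $B/J(B)\cong\one$; conversely $B/J(B)\cong\one$ forces $J(B)=\Ker\epsilon$ (as $\epsilon$ kills the nilpotent ideal $J(B)$ and both have length $\ell(B)-1$), which is nilpotent. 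The crucial point is that the nilpotency index is bounded a priori: the chain $\Ker\epsilon\supseteq(\Ker\epsilon)^2\supseteq\cdots$ strictly decreases in length until it hits $0$, so $\Ker\epsilon$ is nilpotent if and only if $(\Ker\epsilon)^{N}=0$ for $N:=\ell(B)$ the length of $B$. Writing $\mu_N\colon B^{\ot N}\to B$ for the $N$-fold iterate of $m$, one has $(\Ker\epsilon)^{N}=\mathrm{im}(\mu_N\circ p^{\ot N})$, so this is the vanishing of a morphism built polynomially from $(m,u,\epsilon)$ — a closed condition; dually, coconnectedness of $(B,t)$ is the vanishing of the analogous morphism built from $(\Delta,u)$ on $B^*$, again closed. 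Intersecting with the Hopf-algebra locus shows $X_B$ is Zariski-closed in $\A^N$, hence an affine variety. For the group, \eqref{eq:homs} gives $\End_{\C}(B)=\bigoplus_i\End_K(U_i)$, so $\Ga_B=\Aut_{\C}(B)$ is its unit group $\prod_i\GL_{\dim U_i}(K)$, a product of general linear groups; and $\gamma\in\Ga_B$ acts by conjugation, $\gamma\cdot m=\gamma\, m\,(\gamma^{-1}\ot\gamma^{-1})$ and likewise on $u,\Delta,\epsilon,S$, which in the matrix coordinates is polynomial in the entries of $\gamma$ and of $\gamma^{-1}$, the latter being regular functions on $\Ga_B$ (rational with a power of a determinant in the denominator). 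Hence $\Ga_B\times\A^N\to\A^N$ is a morphism of varieties, i.e. the action is rational; by the previous lemma it preserves $X_B$.

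Everything above except one step is routine bookkeeping with the coordinates of Section~\ref{sec:catslinear}. The one step that requires care is the reduction of ``(co)connected'' to a \emph{single} polynomial equation: one must verify both that nilpotency of $\Ker\epsilon$ is genuinely equivalent to connectedness of $(B,t)$ (and dually for $B^*$) and that the nilpotency index is bounded uniformly in $t\in X_B$ — here by $\ell(B)$ — so that ``nilpotent'' is a finite-degree closed condition rather than an infinite intersection.
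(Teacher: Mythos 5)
Your proposal is correct and follows essentially the same route as the paper: identify $\A^N$ with spaces of linear maps via the coordinates of Section \ref{sec:catslinear} so that the Hopf axioms become polynomial equations and the conjugation action of $\Ga_B\cong\prod_i\GL(B_i)$ is visibly rational, then cut out (co)connectedness by bounding the nilpotency index of $\Ker\epsilon$ by the length of $B$ and expressing $(\Ker\epsilon)^{N}=0$ as the vanishing of $m^{N-1}\circ(1_B-u\epsilon)^{\ot N}$. The only difference is that you spell out the equivalence between connectedness and nilpotency of $\Ker\epsilon$ slightly more explicitly than the paper does, which is a harmless elaboration rather than a different argument.
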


\begin{proof} The proof of the lemma will be based on analyzing objects and morphisms in the category $\C$.
We begin by writing $B$ as \begin{equation}B=\bigoplus_i B_i\ot X_i\end{equation} where $X_i$ are representatives of the isomorphism classes of simple objects of $\C$ and $B_i$ are vector spaces.
By Equation \ref{eq:homs} this already gives us an isomorphism 
$\Ga\cong \prod_i \GL(B_i)$. 
We choose a basis $\{e_{i1},\ldots e_{id_i}\}$ for $B_i$. 

A Hopf algebra structure on $B$ will be given by maps $m:B\ot B\to B$, $u:\one\to B$, $\Delta:B\to B\ot B$, $\epsilon:B\to \one$ and $S:B\to B$. By writing tensor products using the spaces $V_{i,j}^k$ from Section \ref{sec:catslinear} we see that the morphism $m$ is given by 
\begin{equation}\bigoplus_{i,j} B_i\ot B_j\ot X_i\ot X_j\to \bigoplus_k B_k\ot X_k.\end{equation}
Rewriting the first object using the vector spaces $V_{i,j}^k$ gives us \begin{equation}\bigoplus_{i,j,k}B_i\ot B_j\ot V_{i,j}^k\ot X_k\to \bigoplus_k B_k\ot X_k.\end{equation}
This means that $m$ is equivalent to a collection of linear maps \begin{equation}m_{i,j}^k:B_i\ot B_j\ot V_{i,j}^k\to B_k.\end{equation} Similarly, the morphism $u$ is equivalent to a map \begin{equation}u_1:K\to B_1,\end{equation}
the morphism $\Delta$ is equivalent to a collection of maps 
\begin{equation}\Delta_k^{i,j}:B_k\to B_i\ot B_j\ot V_{i,j}^k,\end{equation}
the morphism $\epsilon$ is equivalent to a map 
\begin{equation}\epsilon_1:B_1\to K\end{equation} and the antipode $S$ is equivalent to a collection of linear maps 
\begin{equation}S_i:B_i\to B_i.\end{equation}

We rewrite now the affine variety $\A^N$ as 
$$\A^N = \bigoplus_{i,j,k} Hom_K(B_i\ot B_j\ot V_{i,j}^k, B_k)\oplus Hom_K(K,B_1) \oplus $$ \begin{equation}\bigoplus_{i,j,k} Hom_K(B_k,B_i\ot B_j\ot V_{i,j}^k)\oplus Hom_K(B_1,K)\oplus \bigoplus_i Hom_K(B_i,B_i).\end{equation}

This description of $\A^N$ shows us that the action of  $\Aut_{\C}(B) =\prod_i \GL(B_i)$ on it is algebraic. Indeed, it is simply given by pre- and post-composing of linear maps.

A choice of bases for $B_i$ and for $V_{i,j}^k$ for all $i,j,k$ will give us a basis for $\A^N$. This enables us to describe the structure we have at hand, the tuple $t=(m,u,\Delta,\epsilon,S)$, as a collection of numbers, the \textit{structure constants} of $t$. Indeed, using the bases for $B_i$ and $V_{i,j}^k$ we can describe the different structure maps as linear maps between vector spaces with given bases, and these are just given by matrices of scalars.

We explain now why the subset $X_B$ is in fact an affine variety.
The idea is to show that all the Hopf algebra axioms can be expressed using polynomial equations. We will also show that the property of being ccc can be described using polynomial equations.

We begin with proving this for the associativity. The proof for the other Hopf algebra axioms is similar. 
The associativity axioms says that $m(m\ot 1_B) = m(1_B\ot m)\alpha_{B,B,B}$. Writing this using the maps $\alpha_{i,j,k}^{a,b,c}:V_{i,j}^a\ot V_{a,k}^b\to V_{i,c}^b\ot V_{j,k}^c$ and $m_{i,j}^k$ we get that associativity is equivalent to the commutativity of the diagram 
\begin{equation}\xymatrix{ B_i\ot B_j\ot B_k\ot V_{i,j}^a\ot V_{a,k}^b\ar[rr]^{\sum_c\alpha_{i,j,k}^{a,b,c}}\ar[d]^{m_{i,j}^a} & & \bigoplus_c B_i\ot B_j\ot B_k\ot V_{i,c}^b\ot V_{j,k}^c\ar[d]^{m_{j,k}^c} \\
B_a\ot B_k\ot V_{a,k}^b\ar[d]^{m_{a,k}^b} & & \bigoplus_c B_i\ot B_c\ot V_{i,c}^b\ar[d]^{m_{i,c}^b} \\ 
B_b\ar[rr]^{=} & & B_b } \end{equation}
where we simplified the morphisms by writing $\alpha_{i,j,k}^{a,b,c}$ instead of $1_{B_i\ot B_j\ot B_k}\ot \alpha_{i,j,k}^{a,b,c}$ and similarly for the other morphisms. The linear map $m_{i,j}^a$ goes from  $B_i\ot B_j\ot V_{i,j}^a$ to $B_a$, but it is clear how to get a linear map from it as shown in the diagram.
If we write now the linear maps $m_{i,j}^k$ and $\alpha_{i,j,k}^{a,b,c}$ in terms of the bases of $B_i$ and $V_{i,j}^k$ we get a set of quadratic polynomials on the structure constants whose vanishing is equivalent to the associativity of $m$. 
To state this more precisely, let us write $B_i=span\{e^i_r\}_r$ and $V_{i,j}^a= span\{v_{i,j,r}^a\}_r$. The map $\alpha^{a,b,c}_{i,j,k}:V_{i,j}^a\ot V_{a,k}^b\to V^{i,c}_b\ot V_{j,k}^c$ can be written as \begin{equation} v_{i,j,r_1}^a\ot v_{a,k,r_2}^b\mapsto \sum_{s_1,s_2}\alpha^{a,b,c,r_1,r_2}_{i,j,k,s_1,s_2}v^b_{i,c,s_1}v^c_{j,k,s_2}\end{equation} and the multiplication map $m_{i,j}^a$ can be written as \begin{equation} e^i_{r_1}\ot e^j_{r_2}\ot v_{i,j,r_3}^a\mapsto \sum_{r_4}m_{i,j,r_4}^{a,r_1,r_2,r_3}e^a_{r_4}.\end{equation} We consider now the basis element $e^i_{r_1}\ot e^j_{r_2}\ot e^k_{r_3}\ot v^a_{i,j,r_4}\ot v^b_{a,k,r_5}$. The commutativity of the above diagram is then equivalent to the vanishing of the quadratic polynomials
\begin{equation}\sum_{r_6}m_{i,j,r_6}^{a,r_1,r_2,r_4}m_{a,k,t}^{b,r_6,r_5,r_3} - \sum_{s_1,s_2,s_3}\alpha_{i,j,k,s_1,s_2}^{a,b,c,r_4,r_5}m_{j,k,s_3}^{c,r_2,r_3,s_2}m_{i,c,t}^{b,r_1,s_3,s_1},\end{equation}
where $t\in\{1,\ldots, \dim(B_b)\}$. 
The scalars $\alpha_{i,j,k,s_1,s_2}^{a,b,c,r_1,r_2}$ depend only on the fusion category, and we can consider them as constants. We thus get quadratic polynomials on the set of variables $m_{i,j,r_4}^{a,r_1,r_2,r_3}$. 
For similar reasons the other Hopf algebra axioms can be written as well as polynomials in the structure constants. 

It is left to show that being ccc is a closed condition for Hopf algebras. For this, write $n=\sum_i \dim_K(B_i)$. We claim the following:
\begin{lemma} An ideal $J$ of $(B,t)$ is nilpotent if and only if $J^n=0$.  
\end{lemma}
\begin{proof}
	One direction is clear. For the other direction, if $J$ is nilpotent, the sequence of ideals $J\supsetneq J^2\supsetneq J^3\cdots  $ is 
	strictly monotonic decreasing until it stabilizes at zero. Thus, the sequence $d_a:=\dim \Hom_{\C}(\oplus_i X_i, J^a)$ satisfy 
	\begin{equation}n > d_{1} > d_{2}> \ldots \end{equation} and this sequence of numbers stabilizes at zero. This implies that if $J$ is nilpotent its $n$-th power must already be zero. 
\end{proof}

By definition, a Hopf algebra $(B,t)$ is ccc if it is connected and coconnected. We will show that being connected is a closed condition. The fact that coconnectedness is a closed condition follows from a dual argument. 
By definition of connectedness, $(B,t)$ is connected if the ideal $\Ker(\epsilon)$ is nilpotent. This is equivalent to $\Ker(\epsilon)^n=0$ by the above lemma. It holds that the map \begin{equation}P=1_B-u\epsilon:B\to B \end{equation}is a projection on $\Ker(\epsilon)$ (this holds in any Hopf algebra, and follows from the fact that $\epsilon\circ u=\text{Id}_{\one}$).
The nilpotency of $\Ker(\epsilon)$ is thus equivalent to the fact that the map $$B^{\ot n}\stackrel{P^{\ot n}}{\longrightarrow} B^{\ot n}\stackrel{m^{n-1}}{\longrightarrow} B$$ is the zero map.
But again, this can be written as a polynomial equation using the structure constants of $u$, $\epsilon$ and $m$. The subset $X_B$ is thus a closed subvariety of $\A^N$, and the action of $\Ga$ on it is algebraic. The theory of algebraic groups and geometric invariant theory can thus be applied in our setting (see the results in Subsection \ref{subsec:alggroups}).
\end{proof}
\begin{definition}\label{def:leadsto}
If $R_1$ and $R_2$ are two Hopf algebras in $X_B$, we say that $R_2$ is a specialization of $R_1$ and write $R_1\lt R_2$ if $\Ow_{R_2}\subseteq \wb{\Ow_{R_1}}$.
\end{definition}
\begin{remark}
We have used here a slightly heavy categorical language, in order to construct the variety $X_B$ in the most general way possible. 
If, for example, the category $\C$ is $\GGYD$ there is a way around this: we can fix only the dimension of $B$ as a vector space, and consider also the action and coaction of $KG$ as part of the structure of $B$, instead of something that is given a-priori, as we have done here. Describing the isomorphism type of $B$ as an object in $\C$ can then be done by declaring what the trace of the operations of the elements of $D(G)$, the Drinfeld double of $G$, on $B$, should be. The construction here relies heavily on the fact that the category we are working in is semisimple. Indeed, the semisimplicity gives us an easy classification of the object of the category and their automorphism groups. See also \cite{AA18} for the study of Nichols algebras in non-semisimple braided monoidal categories.
\end{remark}
\begin{remark}\label{rmk:gradings} The Hopf algebras in $\GGYD$ which one encounters in the study of pointed Hopf algebras are usually graded. 
We study here ccc Hopf algebras and not graded algebras for two reasons. Firstly, 
being connected and coconnected is a conditions which can be described by polynomial equations. 
If we consider instead the variety of all graded Hopf algebras we will get something which is too rigid, and we will not be able to see the specializations in the orbits of $X_B$. Secondly, all finite dimensional graded Hopf algebras with $R_0=\one$ are automatically ccc. Indeed, this follows from the fact that if $R=\bigoplus_{i\geq 0}R_i$ then the Jacobson radical is $\bigoplus_{i\geq 1}R_i$, and the quotient is isomorphic to $\one$. The same holds for the dual. 
\end{remark}

\begin{remark}\label{rmk:duality} The map $R\mapsto R^*$ gives us an isomorphism of varieties $X_B\cong X_{B^*}$ which commutes with the action of $\Aut_{\C}(B)\cong\Aut_{\C}(B^*)$. In particular, if $R_1,R_2\in X_B$ then $R_1\lt R_2$ if and only if $R_1^*\lt R_2^*$ in $X_{B^*}$.
\end{remark}

The following lemma will be useful for the proof of Theorem \ref{thm:main3}.
\begin{lemma}\label{lem:subobj}
Assume that $R\lt R'$. Then $\Pp(R)$ is isomorphic to a subobject of $\Pp(R')$.
\end{lemma}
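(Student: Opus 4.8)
The plan is to exploit the semicontinuity of the dimensions of kernels (or, dually, of images) of linear maps under specialization. Recall that $\Pp(R) = \Ker(\Delta - u\otimes 1 - 1\otimes u : B \to B\otimes B)$, where the structure maps $\Delta$, $u$ depend on the point $t \in X_B$. Since the assignment $t \mapsto (\Delta_t - u_t\otimes 1 - 1\otimes u_t)$ is a morphism of varieties from $X_B$ into $\Hom_\C(B, B\otimes B)$ (its components are linear, hence polynomial, in the structure constants), the rank of this map is a lower semicontinuous function of $t$; equivalently, the dimension of its kernel is upper semicontinuous. Concretely: if $R \lt R'$, then $R'$ lies in $\wb{\Ow_R}$, so $\dim \Pp(R') \geq \dim \Pp(R)$. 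This is the first and essentially the only analytic input.

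Next I would upgrade this numerical inequality to an honest inclusion of subobjects of $\C$. The point is that $\Pp(R)$ and $\Pp(R')$ are not merely vector spaces but objects of $\C$: the group $\Ga_B = \Aut_\C(B) = \prod_i \GL(B_i)$ acts on $X_B$, and for each simple $X_i$ the subspace $\Hom_\C(X_i, \Pp(R)) \subseteq B_i$ is cut out inside $B_i$ by the kernel of the corresponding block of $\Delta_t - u_t\otimes 1 - 1\otimes u_t$. So in fact we get, for each $i$, upper semicontinuity of $\dim \Hom_\C(X_i, \Pp(\cdot))$, hence $\dim \Hom_\C(X_i, \Pp(R')) \geq \dim \Hom_\C(X_i, \Pp(R))$ for all $i$. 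Since $\C$ is semisimple, an object $M$ of $\C$ is determined up to isomorphism by the multiplicities $\dim \Hom_\C(X_i, M)$, and $M$ is isomorphic to a subobject of $N$ precisely when $\dim \Hom_\C(X_i, M) \leq \dim \Hom_\C(X_i, N)$ for every $i$. Applying this with $M = \Pp(R)$ and $N = \Pp(R')$ yields the claim.

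To make the semicontinuity step airtight I would phrase it as follows: consider the universal map $\Phi : X_B \times B \to B\otimes B$ given fibrewise by $\Phi_t = \Delta_t - u_t\otimes 1 - 1\otimes u_t$, restrict to the block indexed by a fixed simple $X_i$ to get a family of linear maps $B_i \to \bigoplus_{j,k} B_j\otimes B_k\otimes V_{j,k}^i$ depending polynomially on $t$, and note that $\{t : \operatorname{rank}(\Phi_t^{(i)}) \leq r\}$ is Zariski closed (vanishing of $(r+1)\times(r+1)$ minors). Since $R' \in \wb{\Ow_R}$ and $\operatorname{rank}(\Phi^{(i)})$ takes value $\dim B_i - \dim\Hom_\C(X_i,\Pp(R))$ identically on the orbit $\Ow_R$ (the rank is constant on $\Ga_B$-orbits because $\Ga_B$ acts by pre- and post-composition with isomorphisms), it takes value $\leq \dim B_i - \dim\Hom_\C(X_i,\Pp(R))$ on the closure, which is the desired inequality.

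I do not anticipate a serious obstacle here — the statement is genuinely a semicontinuity fact dressed in categorical language, and the only mild care needed is the bookkeeping that identifies $\Hom_\C(X_i,\Pp(R))$ with the kernel of an explicit block and the observation that rank is orbit-constant. If anything, the subtlety is purely expository: making sure that "$\Pp(R)$ is a subobject of $\Pp(R')$'' is read as an abstract isomorphism of objects of $\C$ (which is all that multiplicity-counting gives) rather than a canonical inclusion, since the two objects live in different Hopf algebra structures on the same underlying $B$.
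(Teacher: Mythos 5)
Your proposal is correct and follows essentially the same route as the paper: decompose $T_R=\Delta-u\ot 1-1\ot u$ into blocks $(T_R)_i\colon B_i\to\bigoplus_{j,k}B_j\ot B_k\ot V_{j,k}^i$ indexed by the simple objects, use that $\{\operatorname{rank}\leq r\}$ is Zariski closed to get $\operatorname{rank}((T_{R'})_i)\leq\operatorname{rank}((T_R)_i)$ on the orbit closure, and conclude via the multiplicity inequalities in the semisimple category $\C$. Your additional remarks on orbit-constancy of the rank and on multiplicities determining the subobject relation merely make explicit what the paper leaves implicit.
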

\begin{proof}
The object of primitive elements in $R$ is the same as the kernel of the map $$T_R=\Delta - u\ot 1_R - 1_R\ot u : R\to R\ot R$$
We can write this map as the direct sum of maps $B_i\ot X_i\to \oplus_{j,k}B_j\ot B_k\ot V_{i,j}^k\ot X_i$. 
Such a map is thus equivalent to a collection of maps $(T_R)_i:B_i\to \bigoplus_{j,k}B_j\ot B_k\ot V_{i,j}^k$.
For every linear map $L$ and any natural number $m$ the condition that $rank(L)\leq m$ is a Zariski closed condition. 
This implies that for every $i$ it holds that \begin{equation}rank((T_{R'})_i)\leq rank((T_R)_i)\end{equation} and therefore
\begin{equation}\dim(\Ker(T_{R'})_i)\geq \dim(\Ker((T_R)_i).\end{equation}
This gives us the desired result.
\end{proof}

\begin{proof}[Proof of Theorem \ref{thm:main3}]
If $R\lt \B(V)$ then $\Pp(R)$ is isomorphic to a subobject of $\Pp(\B(V)) = V$. 
Similarly $R^*\lt \B(V)^*\cong \B(V^*)$ by Lemma \ref{lem:nicholsdual} and Remark \ref{rmk:duality} so $\Pp(R^*)$ is isomorphic to a subobject of $\Pp(\B(V^*))=V^*$. 
Since $V$ is simple, $V^*$ is simple as well, and it follows that $\Pp(R)= 0$ or $V$ and $\Pp(R^*)= 0$ or $V^*$.
The option $\Pp(R^*)= 0$ is not possible, since this would imply that $J/J^2=0$ where $J$ is the Jacobson radical of $R$, and $R$ must then be the trivial algebra.
In a similar way, $\Pp(R)=0$ is impossible. We are left with the situation where $\Pp(R) = V$ and $\Pp(R^*)=V^*$. 
But this already implies that $\Pp(R)\ot \Pp(R^*)\to \one$ is a non-degenerate pairing, which implies by Lemma \ref{lem:pairing} that $R$ is a Nichols algebra, as desired.
\end{proof}

\end{section}

\begin{section}{Filtrations of Hopf algebras}\label{sec:filtrations}
As we have seen in previous sections, understanding specializations is fundamental to understand ccc Hopf algebras. 
Using the Hilbert-Mumford criterion, we will show that if $\Ow_{R_1}\subseteq \wb{\Ow_{R_2}}$ and $\Ow_{R_1}$ is closed, the specialization $R_2\lt R_1$ follows from a filtration on $R_2$, in a way which we will describe now. Let $R=(B,t)$ be a ccc Hopf algebra in $\C$. 
\begin{definition}[see also \cite{Makhlouf}] A Hopf algebra filtration on $R$ is a chain of $\C$-subobjects of $R$ 
$$\cdots R_{-2}\subseteq R_{-1}\subseteq R_0\subseteq R_1\subseteq\cdots$$ such that the following properties hold:
$$R_i = R \text{ for } i>>0,$$
$$R_i=0 \text{ for } i<<0,$$
$$\forall i,j: m(R_i\ot R_j)\subseteq R_{i+j},$$
\begin{equation}\forall k: \Delta(R_k)\subseteq \sum_{i+j= k} R_i\ot R_j,\end{equation}
$$\epsilon(R_{-1})=0,$$
$$\text{Im}(u)\subseteq  R_0,$$
$$\forall i: S(R_i)\subseteq R_i.$$
We will denote the filtration by $(R_i)$. 
\end{definition}
\begin{lemma} Every Hopf algebra filtration $(R_i)$ of $R$ determines a $\Z$-graded Hopf algebra $\text{gr}R$ with the same underlying object $B$, defined as follows:
\begin{enumerate}
\item As an object of $\C$, $$R=\oplus_{i\in \Z} R_i/R_{i-1}.$$ 
\item The unit $\wb{u}\in \text{gr}R$ is the image of $u$ in $R_0/R_{-1}$.
\item The counit $\wb{\epsilon}:\text{gr}R\to K$ is given by $\oplus R_i/R_{i-1}\to R_0/R_{-1}\stackrel{\epsilon}{\to} K$ where $\epsilon$ here is the map which is induced from $\epsilon$, since $\epsilon(R_{-1})=0$.
\item The condition on the antipode $S$ implies that it defines a collection of induced maps $\wb{S}_i:R_i/R_{i-1}\to R_i/R_{i-1}$. The antipode $\wb{S}$ of $\text{gr}R$ is $\sum_i \wb{S}_i$.
\item The condition on $m$ implies that we have an induced map $\wb{m}_{i,j}:R_i/R_{i-1}\ot R_j/R_{j-1}\to R_{i+j}/R_{i+j-1}$ for every $i,j\in \Z$. We define $\wb{m}=\sum_{i,j} \wb{m}_{i,j}$.
\item The condition on $\Delta$ implies that we have an induced map $\wb{\Delta}_{i,j}:R_{i+j}/R_{i+j-1}\to R_i/R_{i-1}\ot R_j/R_{j-1}$ for every $i,j\in\Z$. We define $\wb{\Delta}=\sum_{i,j}\wb{\Delta}_{i,j}$. 
\end{enumerate}
\end{lemma}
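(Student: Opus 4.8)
The plan is to verify, block by block, three things: that the data in (1)--(6) really is a Hopf algebra object of $\C$, that it is $\Z$-graded in the stated way, and that it is connected and coconnected; only the last point has any real content, the rest being a transfer of identities along the ``associated graded'' construction. First I would record the bookkeeping made available by semisimplicity: each sequence $0\to R_{i-1}\to R_i\to R_i/R_{i-1}\to 0$ splits, and a compatible choice of splittings identifies $R$, as an object of $\C$, with $\bigoplus_i R_i/R_{i-1}=R_{gr}$; since the filtration is bounded this is a finite sum, so $R_{gr}$ is an honest (finite dimensional) object of $\C$ isomorphic to $B$. The same splittings equip $R\ot R$ with the filtration $(R\ot R)_k=\sum_{i+j=k}R_i\ot R_j$ and identify its associated graded with $R_{gr}\ot R_{gr}$, the piece $R_i/R_{i-1}\ot R_j/R_{j-1}$ lying in degree $i+j$; by naturality $\sigma_{R,R}$ respects this filtration and its associated graded is $\sigma_{R_{gr},R_{gr}}$. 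The seven filtration axioms on $(R_i)$ say precisely that $m,u,\Delta,\epsilon,S$ are filtered morphisms, hence each induces a map of associated graded objects, and unwinding the definitions these are exactly the $\wb{m},\wb{u},\wb{\Delta},\wb{\epsilon},\wb{S}$ of the statement. The only step needing a moment's care is well-definedness of $\wb{\Delta}_{i,j}$ (and of $\wb{m}_{i,j}$): one checks that $\Delta(R_{k-1})$ maps to $0$ in $\bigoplus_{i+j=k}R_i/R_{i-1}\ot R_j/R_{j-1}$, which holds because a summand $R_a\ot R_b$ with $a+b<k$ has $a<i$ or $b<j$ for every $i,j$ with $i+j=k$, and so already dies in one tensor factor.

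With this setup the Hopf algebra axioms for $R_{gr}$ are immediate: associativity, coassociativity, the two compatibilities making $\wb{\Delta}$ and $\wb{\epsilon}$ algebra maps (using that $\mathrm{gr}$ of the $\sigma$-twisted multiplication on $R\ot R$ is the $\sigma$-twisted multiplication on $R_{gr}\ot R_{gr}$), and the two antipode identities are all equalities of filtered morphisms, and $\mathrm{gr}$ is a functor compatible with composition and tensor products, so each equality descends. The degree bookkeeping in (2)--(6) then says exactly that $R_{gr}$ is $\Z$-graded as a Hopf algebra: $\wb{m}$ adds degrees, $\wb{\Delta}$ on the degree-$k$ component is $\sum_{i+j=k}\wb{\Delta}_{i,j}$, $\wb{u}$ lands in degree $0$, $\wb{\epsilon}$ is supported in degree $0$, and $\wb{S}$ preserves degree.

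The substantive point is that $R_{gr}$ is ccc. For connectedness, let $I=\Ker(\epsilon_R)$; by the nilpotency lemma proved above, $I^n=0$ with $n=\sum_i\dim_K B_i$. The filtration restricts to $I$, and $\mathrm{gr}(I)=\bigoplus_i (I\cap R_i)/(I\cap R_{i-1})$ is a graded ideal of $R_{gr}$, any product of whose $n$ homogeneous elements is the symbol of a product of $n$ elements of $I$, hence $0$; thus $\mathrm{gr}(I)^n=0$. Since $\wb{\epsilon}$ admits the section $\wb{u}$, the ideal $\Ker(\wb{\epsilon})$ has dimension $n-1=\dim_K I=\dim_K\mathrm{gr}(I)$, while $\mathrm{gr}(I)\subseteq\Ker(\wb{\epsilon})$; by this dimension count they coincide, so $\Ker(\wb{\epsilon})$ is a nilpotent ideal of codimension one, hence equals $J(R_{gr})$ (as $R_{gr}$ is not nilpotent), and $R_{gr}/J(R_{gr})\cong\one$.

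For coconnectedness I would avoid repeating the argument and pass to duals: the filtration $(R_i)$ induces the Hopf algebra filtration $F_j=(R_{-j-1})^{\perp}$ on $R^*$, with $(R^*)_{gr}\cong (R_{gr})^*$ as graded Hopf algebras; since $R$ is coconnected, $R^*$ is connected, so the case just treated shows $(R_{gr})^*$ is connected, i.e.\ $R_{gr}$ is coconnected. Nothing here is deep; the place where care is genuinely needed is the filtered-tensor-product bookkeeping --- getting the identification $\mathrm{gr}(R\ot R)\cong R_{gr}\ot R_{gr}$ and its compatibility with $\sigma$ exactly right --- together with the identification $\Ker(\wb{\epsilon})=\mathrm{gr}(\Ker\epsilon_R)$, after which everything is routine transfer along $\mathrm{gr}$.
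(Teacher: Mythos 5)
Your proof is correct, and for the one substantive point --- that $R_{gr}$ is ccc --- it takes a genuinely different route from the paper. The paper's own proof of this lemma is a two-line deferral: it asserts that the structure maps are well defined by the filtration axioms, and for both the identification $R_{gr}\cong B$ and the ccc property it points forward to the \emph{next} lemma, which establishes $\Ow_{R_{gr}}\subseteq\wb{\Ow_R}$ by exhibiting $R_{gr}$ as the limit $\lim_{\lambda\to 0}\phi(\lambda)(R)$ of a one-parameter subgroup; since $X_B$ is closed and consists exactly of ccc Hopf algebra structures, $R_{gr}$ is ccc for free. (The paper does remark that the ccc property ``can be verified directly,'' but never writes that verification out.) You instead carry out the direct verification: the associated-graded functor on split filtrations, the well-definedness check for $\wb{\Delta}_{i,j}$ and $\wb{m}_{i,j}$, connectedness via $\mathrm{gr}(\Ker\epsilon)^n\subseteq\mathrm{gr}((\Ker\epsilon)^n)=0$ together with the length count identifying $\mathrm{gr}(\Ker\epsilon)$ with $\Ker(\wb{\epsilon})$, and coconnectedness by passing to the dual filtration. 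Your argument is self-contained and independent of the geometric machinery (the variety $X_B$, closedness of the ccc conditions, the Hilbert--Mumford-style limit), which is a genuine advantage if one wants the lemma to stand on its own; the paper's route is shorter but circular-looking on a first reading, since the lemma is stated before the limit computation that justifies it. Two small points of hygiene: in a general fusion category $\dim_K$ should be read as length, i.e.\ $\dim\Hom_{\C}(\oplus_i X_i,-)$ as the paper does in its nilpotency lemma, and one should note explicitly that $\sigma_{R,R}$ restricted to $T_i\ot T_j$ equals $\sigma_{T_i,T_j}$ by naturality, so that $\mathrm{gr}(\sigma_{R,R})=\sigma_{R_{gr},R_{gr}}$ --- both of which your sketch already implicitly handles correctly.
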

\begin{proof}
We first claim that, as objects of $\C$, we can write $R$ as $R=\oplus_{i\in \Z} T_i$ where $R_i = \oplus_{j\leq i} T_j$ where $T_i$ are subobjects of $R$ in $\C$. This follows from the semisimplicity of $\C$ together with the conditions on the filtration $R_i$. Indeed, take $i<<0$ for which $R_i=0$. Define $T_j=0$ for all $j\leq i$. Then choose $T_{i+k}$ inductively to be a direct sum complement of $T_{i+k-1}$ in $R_{i+k}$. Notice that this implies that $T_j\cong R_j/R_{j-1}$ for every $j\in \Z$, and that $B\cong R\cong\oplus T_j\cong \oplus R_j/R_{j-1}\cong \text{gr}R$ as objects of $\C$.  

All the structure maps of $\text{gr}R$ are well defined due to the condition the structure maps of $R$ satisfy.
\end{proof}
For the next lemma, recall that $X_B\subseteq \A^N$. We can consider the orbit $\Ow_{\text{gr}R}$ inside $\A^N$. 
\begin{lemma}
Let $(R_i)$ be a Hopf algebra filtration of the ccc Hopf algebra $R$. Then 
$\Ow_{\text{gr}R}\subseteq \wb{\Ow_R}$. In particular, $\Ow_{\text{gr}R}$ is also contained in the closed subset $X_B$ of $\A^N$, and as a result $\text{gr}R$ is also a ccc Hopf algebra.
\end{lemma}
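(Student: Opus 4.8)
The plan is to use the Hilbert--Mumford criterion: a point $R'$ lies in $\wb{\Ow_R}$ as soon as there is a one-parameter subgroup $\lambda:\Gg_m\to \Ga$ such that $\lim_{t\to 0}\lambda(t)\cdot R$ exists and lies in $\Ow_{R'}$. So I would first translate the data of a Hopf algebra filtration $(R_i)$ into a cocharacter of $\Ga=\prod_i\GL(B_i)$. Concretely, the filtration restricts on each isotypic component to a filtration of the vector space $B_i$; choosing a basis of $B_i$ compatible with this filtration, one defines $\lambda(t)$ to act on a basis vector lying in $R_{k}\cap(B_i\ot X_i)$ but not in $R_{k-1}$ by the scalar $t^{k}$. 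This gives a genuine one-parameter subgroup of $\GL(B_i)$ for each $i$, hence of $\Ga$.

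Next I would compute the limit $\lim_{t\to 0}\lambda(t)\cdot R$ inside $\A^N$ and identify it with $R_{gr}$. The action of $\Ga$ on each summand $\Hom_K(B_i\ot B_j\ot V_{i,j}^k,B_k)$ of $\A^N$ is by pre- and post-composition, so after writing $m$, $u$, $\Delta$, $\epsilon$, $S$ in the chosen graded bases, the matrix entry of $\lambda(t)\cdot m_{i,j}^k$ that sends a degree-$a$ vector and a degree-$b$ vector to a degree-$c$ vector is multiplied by $t^{c-a-b}$. The filtration conditions $m(R_i\ot R_j)\subseteq R_{i+j}$ and $\Delta(R_k)\subseteq\sum_{i+j=k}R_i\ot R_j$ say exactly that all exponents $c-a-b$ occurring in $m$ are $\ge 0$ and all exponents occurring in $\Delta$ are $\le 0$; the conditions on $u,\epsilon,S$ say their nonzero exponents are $\ge 0$, $\le 0$ (since $\epsilon(R_{-1})=0$ and $\mathrm{Im}(u)\subseteq R_0$), and $=0$ respectively. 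Hence all negative powers of $t$ are absent, the limit $t\to 0$ exists, and the surviving terms are precisely the degree-preserving ones, which are by construction the structure maps $\wb m,\wb u,\wb\Delta,\wb\epsilon,\wb S$ of $R_{gr}$. Thus $\lim_{t\to 0}\lambda(t)\cdot R=R_{gr}$ as a point of $\A^N$, so $R_{gr}\in\wb{\Ow_R}$, i.e. $\Ow_{R_{gr}}\subseteq\wb{\Ow_R}$.

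Finally, since $X_B$ is closed in $\A^N$ and $\Ga$-stable, and $\Ow_R\subseteq X_B$, the closure $\wb{\Ow_R}$ is contained in $X_B$; hence $R_{gr}\in X_B$, which exactly means $R_{gr}$ is a ccc Hopf algebra structure on $B$. (This also re-derives, geometrically, the claim of the previous lemma that $R_{gr}$ is ccc and that $R_{gr}\cong B$ as an object of $\C$, the latter because $\dim_K B_i$ is unchanged by passing to the associated graded of a filtration of $B_i$.)

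The main obstacle I anticipate is bookkeeping rather than conceptual: one must check carefully that the exponent $t^{c-a-b}$ assignment is consistent across \emph{all} the structure maps simultaneously with a \emph{single} cocharacter, i.e. that the gradings induced on the various $B_i$ by one filtration $(R_i)$ are mutually compatible in the tensor identifications $B\cong\bigoplus_i B_i\ot X_i$ and $B\ot B\cong\bigoplus_{i,j,k}B_i\ot B_j\ot V_{i,j}^k\ot X_k$. Since the filtration is a filtration of $\C$-subobjects of $R$, it decomposes compatibly on isotypic components, and the spaces $V_{i,j}^k$ carry degree $0$ (they are part of the fixed categorical data, untouched by $\Ga$), so this compatibility does hold; but it is the point that needs to be stated with care, and it is the only place where the hypothesis that $(R_i)$ is a filtration \emph{by subobjects of $\C$} (not merely by subspaces) is used.
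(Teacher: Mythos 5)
Your strategy is exactly the one the paper uses: split the filtration into graded pieces $T_k$ (possible by semisimplicity of $\C$), encode the filtration as a one-parameter subgroup of $\Ga=\prod_i\GL(B_i)$ scaling the degree-$k$ piece, observe that the filtration axioms force all powers of the parameter appearing in the conjugated structure constants to be non-negative, and identify the limit with $R_{gr}$. (One small remark: you do not need the Hilbert--Mumford criterion here; that result is the nontrivial \emph{converse}. All you use is the trivial fact that a limit of points of an orbit lies in its closure.)

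However, as written the computation fails because of a sign error in the cocharacter, and your stated sign pattern is internally inconsistent. With $\lambda(t)$ acting by $t^{k}$ on the degree-$k$ piece, the component of $\lambda(t)\cdot m$ sending $T_a\ot T_b$ to $T_c$ is scaled by $t^{c-a-b}$, and the filtration condition $m(R_a\ot R_b)\subseteq R_{a+b}$ says precisely that nonzero components have $c\leq a+b$, i.e.\ $c-a-b\leq 0$ --- not $\geq 0$ as you claim. Dually, the components $T_c\to T_a\ot T_b$ of $\Delta$ are scaled by $t^{a+b-c}$ with $a+b\leq c$, again non-positive; and you yourself write that the exponents occurring in $\Delta$ are $\leq 0$, which contradicts your next sentence that ``all negative powers of $t$ are absent.'' So with your cocharacter $\lim_{t\to 0}\lambda(t)\cdot R$ does not exist unless $R$ was already graded. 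The fix is immediate: act by $t^{-k}$ on the degree-$k$ piece (the paper's $\phi(\lambda)=\sum_i\lambda^{-i}\mathrm{Id}_{T_i}$), or equivalently take $t\to\infty$; then every structure map acquires only non-negative exponents, the limit exists, and the surviving degree-zero parts are exactly $\wb m,\wb u,\wb\Delta,\wb\epsilon,\wb S$. A second, harmless inaccuracy: the antipode condition is $S(R_i)\subseteq R_i$, which allows $S$ to lower the degree, so its components are not all of degree $0$; they merely carry exponents of the correct sign after the fix. With these corrections your argument coincides with the paper's proof.
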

\begin{proof}
We use the $\C$-objects $T_i$ constructed in the previous lemma. We write all the structure maps of $R$ in terms of the direct sum decomposition $R=\oplus_i T_i$. The conditions on the filtration gives us 
$$m=\sum_{i+j\geq k}m_{i,j}^k, 
\Delta= \sum_{i+j\leq k} \Delta_k^{i,j},$$
\begin{equation}u = \sum_{i\leq 0} u^i, \quad
\epsilon = \sum_{i\geq 0}\epsilon_i\end{equation}
$$S = \sum_{i\geq j}S_i^j$$
 where 
$$m_{i,j}^k:T_i\ot T_j\to T_k,\quad \Delta_k^{i,j}: T_k\to T_i\ot T_j,$$
\begin{equation}u^i:K\to T_i, \quad \epsilon_i:T_i\to K,\text{ and }\end{equation} $$S_i^j:T_i\to T_j.$$ 
Using the identification $R\cong \oplus_i T_i\cong \oplus_i R_i/R_{i-1}\cong \text{gr}R$ we see that the multiplication in $\text{gr}R$ is given by $\sum_{i,j}m_{i,j}^{i+j}$, the comultiplication is given by $\sum_{i,j}\Delta_{i+j}^{i,j}$, the unit by $u=u_0$, the counit is $\epsilon^0$, and the antipode is $\sum_i S_i^i$. We thus see that in passing from $R$ to $\text{gr}R$ we ``deleted'' all the parts of the structure maps that are of positive degree and stayed only with maps of degree zero (maps of negative degree do not appear here at all).  Here the degree of a map $T_{i_1}\ot \cdots\ot T_{i_r}\to T_{j_1}\ot\cdots\ot T_{j_m}$ is $i_1+i_2+\cdots + i_r-j_1-j_2-\cdots-j_m$. 

We use this idea to prove that the orbit of $\text{gr}R$ in $\A^N$ is in the closure of the orbit of $R$. This will already imply that $\text{gr}R$ is a ccc Hopf algebra, because $\Ow_R\subseteq X_B$, and $X_B$ is closed in $\A^N$. To prove this, we will use a \textit{one-parameter subgroup} of $\Ga$. That is: a group homomorphism $\phi:\Gg_m\to \Ga= \Aut_{\C}(B)$. We define $\phi$ as follows
\begin{equation}\phi(\lambda) = \sum_{i\in \Z}\lambda^{-i}\text{Id}_{T_i}.\end{equation}
We claim that $\phi(\Gg_m)(R)$ contains $\text{gr}R$ in its closure.
Since $\phi(\Gg_m)$ is a subgroup of $\Ga$ this will be enough.
To prove this write \begin{equation}\phi(\lambda)(R) = (B,t_{\lambda}) =  (B,m_{\lambda},u_{\lambda},\Delta_{\lambda},\epsilon_{\lambda},S_{\lambda}).\end{equation} 
We get $$m_{\lambda} = \sum_{i+j\geq k } \lambda^{i+j-k}m_{i,j}^k,\quad
\Delta_{\lambda} = \sum_{i+j\leq k} \lambda^{k-i-j}\Delta_k^{i,j}$$
\begin{equation}u_{\lambda} = \sum_{i\leq 0} \lambda^{-i}u^i,\quad 
\epsilon_{\lambda} = \sum_{i\geq 0} \lambda^i\epsilon_i\end{equation}
$$S_{\lambda} = \sum_{i\geq j}\lambda^{i-j}S_i^j.$$
This description shows us that $\lim_{\lambda\to 0}\phi(\lambda)(R)$ exists, since in all the above expression $\lambda$ appears only with non-negative powers. Taking the limit $\lambda\to 0$ gives us $\lim_{\lambda \to 0}\phi(\lambda)(B,t)= (B,m',u',\Delta',\epsilon',S')$
where 
$$m' = \sum_{i,j}m_{i,j}^{i+j} = \wb{m}, \Delta' = \sum_{i,j}\Delta_{i+j}^{i,j}=\wb{\Delta}$$
\begin{equation}u' = u_0 = \wb{u}, \epsilon' = \epsilon_0 = \wb{\epsilon}\end{equation}
$$S' = \sum_i S_i^i = \wb{S}$$
This shows us that the limit point is exactly the structure constants of $\text{gr}R$. We are done.
\end{proof}
\begin{remark}
If $R\in X_B$ and $\phi:\Gg_m\to \Ga$ is any one-parameter subgroup for which $\lim_{\lambda\to 0}\phi(\lambda)(R)$ exists, 
then we get a filtration on $R$ by setting $$T_i= \Ker(\phi(\lambda)-\lambda^{-i})\subseteq R\text{ for a generic } \lambda$$ $$R_i = \oplus_{j\leq i} T_j.$$
The fact that $\lim_{\lambda\to 0} \phi(\lambda)(R)$ exists implies, by the same argument as above, that $(R_i)$ is a Hopf algebra filtration. We thus see that the isomorphism classes of ccc Hopf algebras which appear on the boundary of $R$ by the action of a 1-parameter subgroup are exactly the ccc Hopf algebra arising from $R$ by a Hopf algebra filtration.
Moreover, by the Hilbert-Mumford criterion if $\Ow_{R'}\subseteq \wb{\Ow_R}$ and $\Ow_{R'}$ is closed, then there exists a one-parameter subgroup $\phi:\Gg_m\to \Ga$ such that $R'=\lim_{\lambda\to 0}\phi(\lambda)(R)$ (see Theorem 1.4. in \cite{Kempf}). This implies that in order to understand specializations, and especially specializations to ccc Hopf algebras with closed orbits, we need to study Hopf algebra filtrations.
\end{remark}

We finish this section with two filtrations which are canonically associated to any ccc Hopf algebra: the radical and the coradical filtration. They are dual to one another, in a way which we shall explain below. As was explained in the introduction, the coradical filtration of a Hopf algebra (not necessarily a ccc one) is used in a fundamental way in the classification of non-semisimple Hopf algebras. The use of both filtrations together will play an important role in studying closure of orbits in this paper.

For the radical filtration, let $J=\Ker(\epsilon)$ be the Jacobson radical of $R$. We claim the following:
\begin{lemma}
The filtration $R_i = J^{-i}$ for $i<0$ and $R_i=R$ for $i\geq 0$ is a Hopf algebra filtration.
\end{lemma}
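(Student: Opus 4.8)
The plan is to verify directly that the chain $\cdots \subseteq R_{-2} \subseteq R_{-1} \subseteq R_0 = R_1 = \cdots$ with $R_i = J^{-i}$ for $i < 0$ and $R_i = R$ for $i \geq 0$ satisfies every bullet in the definition of a Hopf algebra filtration. Most of the conditions are either trivial or follow from elementary ring-theoretic facts about the Jacobson radical, so the real content is the single compatibility condition for the comultiplication, namely $\Delta(J^n) \subseteq \sum_{i+j=n} J^i \ot J^j$ (with the convention $J^0 = R$, $J^m = R$ for $m \le 0$), which after reindexing is exactly $\Delta(R_{-n}) \subseteq \sum_{i+j=-n} R_i \ot R_j$.

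First I would dispose of the routine conditions. Since $J$ is an ideal, $m(J^a \ot J^b) \subseteq J^{a+b}$, which translates to $m(R_i \ot R_j) \subseteq R_{i+j}$ once one checks the degenerate cases where $i$ or $j$ is $\geq 0$ (there $R_i = R$ or $R_j = R$ and the containment $J^a \cdot R \subseteq J^a$, $R \cdot R \subseteq R$ is clear). The conditions $R_i = R$ for $i \gg 0$ and $R_i = 0$ for $i \ll 0$ hold because $R$ is connected, so $J$ is nilpotent and $J^N = 0$ for $N = \dim_K B$ by the nilpotency lemma proved earlier. The condition $\operatorname{Im}(u) \subseteq R_0 = R$ is vacuous, and $\epsilon(R_{-1}) = \epsilon(J) = 0$ holds by definition of $J = \operatorname{Ker}(\epsilon)$. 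For the antipode, $S(J) \subseteq J$ because $S$ is an algebra anti-endomorphism (in the braided sense) and $\epsilon \circ S = \epsilon$, hence $S$ preserves $\operatorname{Ker}(\epsilon)$; then $S(J^a) = S(J)^a \subseteq J^a$ (up to the braiding permuting factors, which does not affect the span), giving $S(R_i) \subseteq R_i$ for all $i$.

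The main step is the comultiplication condition. I would argue by induction on $n \geq 1$ that $\Delta(J^n) \subseteq \sum_{0 \le i \le n} J^i \ot J^{n-i}$. For $n = 1$: for $x \in J = \operatorname{Ker}(\epsilon)$, writing $\Delta(x) = x \ot 1 + 1 \ot x + (P \ot P)\Delta(x)$ where $P = 1_R - u\epsilon$ is the projection onto $J$ used earlier in the paper, the term $(P \ot P)\Delta(x)$ lies in $J \ot J$, while $x \ot 1 \in J \ot R = J^1 \ot J^0$ and $1 \ot x \in R \ot J = J^0 \ot J^1$; so $\Delta(J) \subseteq J \ot R + R \ot J$, which is the $n=1$ case. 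For the inductive step, since $\Delta$ is an algebra map in $\C$ and $J^n$ is spanned (as a $\C$-subobject) by products of $n$ elements of $J$, one has $\Delta(J^n) \subseteq \Delta(J) \cdot \Delta(J^{n-1}) \subseteq \big(\sum_{a} J^a \ot J^{1-a}\big)\big(\sum_{b} J^b \ot J^{n-1-b}\big)$, and multiplying out, each term $\big(J^a \ot J^{1-a}\big)\big(J^b \ot J^{n-1-b}\big)$ lands (after applying the braiding $\sigma_{R,R}$, which maps $J^{1-a} \ot J^b$ into $J^b \ot J^{1-a}$ since the braiding is a morphism in $\C$ and $J$-powers are $\C$-subobjects) inside $J^{a+b} \ot J^{1-a+n-1-b} = J^{a+b} \ot J^{n-(a+b)}$, using $m(J^p \ot J^q) \subseteq J^{p+q}$. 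Summing over all contributions gives $\Delta(J^n) \subseteq \sum_{0 \le k \le n} J^k \ot J^{n-k}$, as required. Rewriting $R_{-n} = J^n$, $R_{-k} = J^k$ for $k \ge 0$ (with $R_{-k} = R = J^0$ when $k \le 0$) yields precisely $\Delta(R_{-n}) \subseteq \sum_{i+j=-n} R_i \ot R_j$.

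The step I expect to be the main obstacle is bookkeeping the braiding in the induction for $\Delta$: one must be careful that $(\mathrm{id} \ot m)(\mathrm{id} \ot \sigma \ot \mathrm{id})(\mathrm{id} \ot \mathrm{id} \ot m \ot \cdots)$ — i.e. the multiplication on $R \ot R$ inside the braided category — really does carry the product of two filtered pieces into the expected filtered piece, which hinges on the fact that $\sigma_{R,R}$ restricts to morphisms $J^p \ot J^q \to J^q \ot J^p$ because each $J^p$ is a subobject of $R$ in $\C$ and $\sigma$ is natural. Once that naturality point is made explicit, the rest is a routine degree count. A dual version of the same argument (or an appeal to Remark \ref{rmk:duality}, applying the statement to $R^*$) then handles the coradical filtration, which is why the two are called dual.
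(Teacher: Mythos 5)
Your proposal is correct and follows essentially the same route as the paper: the paper also reduces everything to the routine ideal-theoretic facts plus the observation that $\Delta(J)\subseteq J\ot R+R\ot J$ and that the braiding preserves powers of $J$, leaving the inductive computation of $\Delta(J^k)$ as ``a direct calculation'' which you have simply written out in full.
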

\begin{proof}
Since $J$ is a nilpotent ideal, the condition $R_i=0$ for $i<<0$ holds. It is clear that the condition $R_i=R$ for $i>>0$ holds as well (it holds, in fact, for $i=0$).
The fact that $J^i\subseteq J^j$ when $j\leq i$ implies that $R_i\cdot R_j\subseteq R_{i+j}$. 
The condition $u\in R_0$ is immediate, and the condition $\epsilon(R_{-1})=0$ follows from the fact that $J=\Ker(\epsilon)$. For the condition on $\Delta$, notice that $\Delta(J) \subseteq J\ot R + R\ot J$. For any $i,j\geq 0$ the braiding $\sigma:R\ot R\to R\ot R$ satisfies $\sigma(J^i\ot J^j) = J^j\ot J^i$. A direct calculation implies that $\Delta(J^k) \subseteq \sum_{i+j=k} J^i\ot J^j$, which is what we wanted to prove. 
\end{proof}
We write $R_{gra}$ for the graded Hopf algebra arising from $R$ via the radical filtration.

We recall here also the definition of the dual filtration, the coradical filtration, from Section \ref{sec:hopffusion}:
We define $R_i=0$ for $i<0$, $R_0 = \text{Im}(u)$ and \begin{equation}R_n = \Ker\Big(R\stackrel{\Delta^{n-1}}{\to}R^{\ot n}\to (R/R_0)^{\ot n}\Big)\text{ for } n>0.\end{equation}
Again, a direct verification, using the fact that the dual algebra is connected, reveals the fact that this is a filtration of Hopf algebras as well. We write $R_{grc}$ for the graded Hopf algebra associated to this filtration. We thus see that $R\lt R_{gra}$ and $R\lt R_{grc}$ for every ccc Hopf algebra $R$ in $X_B$. 
The two filtrations are dual to one another in the following sense: For $i\in \Z$ let $R_i$ be the $i$-th level of the coradical filtration, and let $X_i\subseteq R^*$ be defined as \begin{equation}X_i = \Ker(R^*\to R_{-i}^*).\end{equation}
Then it holds that $(X_i)$ is the radical filtration on $R^*$. 
This duality induces isomorphisms $(R_{gra})^*\cong (R^*)_{grc}$ and $(R_{grc})^*\cong (R^*)_{gra}$.  
 \end{section}

\begin{section}{A proof of Theorem \ref{thm:main1} and \ref{thm:main4}}\label{sec:proofthm1}
In this section we prove that a ccc Hopf algebra in $X_B$ has a closed orbit if and only if it is isomorphic to a Nichols algebra.
Let $R$ be a Hopf algebra in $X_B$. Recall the associated graded Hopf algebras $R_{gra}$ and $R_{grc}$ from Section \ref{sec:filtrations}.
We know that $R\lt R_{gra}$ and $R\lt R_{grc}$. In particular, if the orbit of $R$ is closed we get that $R\cong R_{grc}$ and $R\cong R_{gra}$. The following proposition shows that if $\Ow_R$ is closed then $R$ is a Nichols algebra. 
\begin{proposition}\label{prop:gragrc}
Assume that the Hopf algebras $R_{gra}$ and $R_{grc}$ are isomorphic as Hopf algebras. 
Then $R$ is a Nichols algebra.
\end{proposition}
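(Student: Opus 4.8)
The plan is to use the two canonical filtrations together, exploiting the fact that one is ``algebra-graded'' and the other ``coalgebra-graded'', and that by hypothesis they produce the same Hopf algebra.

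\textbf{Step 1: reduce to showing $R$ is generated by $\Pp(R)$ and $R^*$ is generated by $\Pp(R^*)$.} By Lemma \ref{lem:pgen}, it suffices to prove these two generation statements; then $R\cong \B(\Pp(R))$ follows. By the duality of the radical and coradical filtrations recorded at the end of Section \ref{sec:filtrations} (namely $(R_{gra})^*\cong (R^*)_{grc}$ and $(R_{grc})^*\cong (R^*)_{gra}$), the hypothesis $R_{gra}\cong R_{grc}$ is self-dual in the sense that $R^*$ satisfies the same hypothesis: $(R^*)_{gra}\cong (R_{grc})^*\cong(R_{gra})^*\cong (R^*)_{grc}$. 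Hence it is enough to prove just one of the two generation statements, say that $R$ is generated by its primitives, and the other follows by applying the same argument to $R^*$.

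\textbf{Step 2: analyze $R_{grc}$.} The coradical filtration of $R$ is a coalgebra filtration, so $R_{grc}$ is a coradically graded Hopf algebra with $(R_{grc})_0=\one$; in particular it is \emph{strictly} graded / coradically graded, so by standard arguments its degree-one part consists of primitive elements and the coradical filtration of $R_{grc}$ coincides with its grading. One knows (this is the coalgebra-side analogue of Step 3 below, and is the easier direction) that $R$ is generated as an algebra by $\Pp(R)$ if and only if $R_{grc}$ is generated by $(R_{grc})_1$, i.e. is a Nichols algebra; but a priori $R_{grc}$ need not be generated in degree one.

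\textbf{Step 3: use $R_{gra}$ to force generation.} The radical filtration gives $R_{gra}=\bigoplus_{n\ge 0} J^n/J^{n+1}$, which is \emph{generated in degree $\le 1$ as an algebra} (indeed $J^n/J^{n+1}$ is spanned by products of $n$ elements of $J/J^2$), so $R_{gra}$ is an algebra-generated-in-degree-one graded Hopf algebra whose degree-one part is $J/J^2$. Now impose the hypothesis $R_{grc}\cong R_{gra}$. On one side the object is coradically graded (its coalgebra is ``as cogenerated as possible'' from degree one), on the other it is generated in degree one as an algebra. I would argue that an object carrying both features is forced to be a Nichols algebra: being coradically graded pins down $\Pp$ to sit in the lowest nonzero positive degree, while algebra-generation in degree one forces there to be no new primitives above degree one — matching Definition \ref{def:Nichols}. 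Concretely, transport the algebra-generation-in-degree-one property across the isomorphism to $R_{grc}$; since $(R_{grc})_1$ consists of primitives and generates $R_{grc}$ as an algebra, and $(R_{grc})_0=\one$, $R_{grc}$ satisfies all the defining properties of $\B((R_{grc})_1)$, hence is a Nichols algebra. Then $R_{grc}$ being a Nichols algebra pulls back to the statement that $R$ is generated by $\Pp(R)$ (dually to how coradical-gradedness of $R_{grc}$ already encodes the coalgebra structure of $R$ near the coradical). Running the same reasoning for $R^*$ via Step 1 and applying Lemma \ref{lem:pgen} finishes the proof.

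\textbf{Main obstacle.} The delicate point is Step 3: making precise why the \emph{same} graded Hopf algebra being simultaneously ``coradically graded'' and ``generated in degree one as an algebra'' must be a Nichols algebra, and in particular controlling the primitive elements in positive degrees $>1$. The subtlety is that in $R_{grc}$ new primitives could appear in higher degrees (this is exactly why the Nichols algebra construction after Definition \ref{def:Nichols} is iterative), and one must use algebra-generation — inherited from $R_{gra}$ through the isomorphism — to rule that out. I also need to be careful that the isomorphism $R_{gra}\cong R_{grc}$ is as \emph{graded} Hopf algebras (which it is, since both gradings are intrinsic: the radical, resp.\ coradical, filtration is canonical), so that degrees match up and the degree-one generation statement transfers. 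Handling the $R^*$ side is then routine given the duality already established in Section \ref{sec:filtrations}.
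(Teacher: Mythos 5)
Your proof is correct and follows essentially the same route as the paper: $R_{gra}$ is generated by its primitive degree-one part $J/J^2$, the isomorphism transfers generation-by-primitives to $R_{grc}$, whose primitives all sit in degree one, so $R_{grc}$ is a Nichols algebra and hence $R$ is generated by $\Pp(R)$; dualizing via $(R_{gra})^*\cong (R^*)_{grc}$ and applying Lemma \ref{lem:pgen} finishes. Your worry about the isomorphism being graded is unnecessary: the paper only transports the isomorphism-invariant property ``generated by primitive elements,'' which combined with $\Pp(R_{grc})=(R_{grc})_1$ already yields generation in degree one.
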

\begin{proof}
Assume that the two Hopf algebras are isomorphic.
Write $R_{gra} = \bigoplus_{i=0}^n A_i$ where $A_i = J^i/J^{i+1}$ (for the sake of simplicity, we use here positive grading instead of the negative grading from Section \ref{sec:filtrations} for the radical filtrations). Then the algebra $R_{gra}$ is generated by $A_1$. The object $A_1$ is primitive, since $\Delta(A_1)\subseteq A_1\ot A_0 + A_0\ot A_1$ and $A_0\cong R/J\cong \one$. 
This implies, in particular, that $R_{gra}$ is generated by the subobject of primitives.
The algebra $R_{grc} = \oplus_{i=0}^m C_i$ has all primitives in degree 1. Since it is isomorphic to the algebra $R_{gra}$, it is also generated by $C_1$. This implies that $R_{grc}$ is a graded Hopf algebra which is generated in degree 1 and has all its primitive elements in degree 1. By Definition \ref{def:Nichols} this implies that $R_{grc}$ is a Nichols algebra. It follows that $R$ is generated by primitive elements.
By a dual argument, and by using the fact that $(R_{gra})^*\cong (R^*)_{grc}$ and $(R_{grc})^*\cong (R^*)_{gra}$ we get that $R^*$ is also generated by its primitive subobject. Lemma \ref{lem:pgen} gives us the desired result.
\end{proof}

This finishes the proof that if $\Ow_R$ is closed then $R$ is a Nichols algebra, because the fact that $R\lt R_{grc}$ and $R\lt R_{gra}$ together with the closure of $\Ow_R$ implies that $R_{grc}\cong R\cong R_{gra}$. 
Next, we will show that if $R$ is a Nichols algebra then $\Ow_R$ is closed.
Assume that this is not the case and let $R=\B(V)$ be a Nichols algebra with a non-closed orbit. The closure $\wb{\Ow_R}$ is the union of the orbit of $R$ with orbits of smaller dimension.
An orbit of minimal dimension in $\wb{\Ow_R}$ is closed. It follows that $R\lt R'$ for some $R'\in X_B$ with $\Ow_{R'}$ closed. 
But we already know that this implies that $R'$ is a Nichols algebra. 

Write $R'=\B(V')$. Then we have $\B(V)\lt \B(V')$. 
By \ref{lem:subobj} we know that this implies that $\Pp(\B(V))=V$ is isomorphic to a subobject of $\Pp(\B(V'))=V'$. 
Since $R\ncong R'$ the object $V$ must be isomorphic to a proper subobject of $V'$. Write $V'=V\oplus V''$. 
The split inclusion of objects in $\C$, $V\to V'\to V$, induces a split inclusion of Nichols algebras $\B(V)\to \B(V')\to \B(V)$. 
This implies that $\B(V)\ncong \B(V')$ because then $\B(V')$ properly contains $\B(V)$, and this contradicts the fact that $R$ and $R'$ are isomorphic to the same object of $\C$.
This finishes the proof of Theorem \ref{thm:main1}

Recall that $\Ga=\Aut_{\C}(B)$. The proof of Theorem \ref{thm:main1} gives us a description of the irreducible components of $X_B$:
\begin{theorem}
Let $B\in \C$. For every $V\in \C$ such that $\B(V)\in X_B$ write $X_V= \{R| R\in X_B\text{ and } R \lt \B(V)\}$. 
Then the subsets $X_V$ are stable under the action of $\Ga$ and are exactly the connected components of $X_B$. 
\end{theorem}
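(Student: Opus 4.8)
The strategy is to show that $X_B$ is the disjoint union of the \emph{finitely many} sets $X_V$, that each $X_V$ is simultaneously Zariski closed and open, and that each $X_V$ is connected; since every $X_V$ is automatically $\Ga$-stable (it is a union of $\Ga$-orbits, as the condition $R\lt \B(V)$ depends only on $\wb{\Ow_R}$), this identifies the $X_V$ with the connected components of $X_B$.

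First I would record the covering and a finiteness statement. By the proof of Theorem \ref{thm:main1}, for any $R\in X_B$ an orbit of minimal dimension in $\wb{\Ow_R}$ is closed, hence is the orbit of a Nichols algebra $\B(V)$ with $\B(V)\in X_B$; thus $R\lt \B(V)$ and $X_B=\bigcup_V X_V$, the union over isomorphism classes $V$ with $\B(V)\in X_B$. Only finitely many $V$ occur: if $\B(V)\in X_B$ then $\B(V)\cong B$ as objects of $\C$, so $V\cong\B(V)_1$ is isomorphic to a subobject of $B$, and in a fusion category a fixed object has only finitely many isomorphism classes of subobjects (they are parametrised by the multiplicities of the simple constituents).

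Next I would show the $X_V$ are pairwise disjoint and each closed. If $R\lt\B(V)$ and $R\lt\B(V')$ then $\wb{\Ow_R}$ contains the two closed orbits $\Ow_{\B(V)}$ and $\Ow_{\B(V')}$; were these disjoint, Proposition \ref{prop:sepinvs} would provide $f\in K[X_B]^{\Ga}$ with $f|_{\Ow_{\B(V)}}=1$ and $f|_{\Ow_{\B(V')}}=0$, contradicting the fact that the invariant $f$ is constant on $\Ow_R$ and hence on $\wb{\Ow_R}$. So $\Ow_{\B(V)}=\Ow_{\B(V')}$, whence $\B(V)\cong\B(V')$ as Hopf algebras and, comparing primitive elements, $V\cong V'$. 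The same argument shows that $\wb{\Ow_R}$ contains a \emph{unique} closed orbit. Consequently $X_V$ is the common zero locus of the functions $\{f-f(\B(V)):f\in K[X_B]^{\Ga}\}$, equivalently the fibre over $\pi(\B(V))$ of the GIT quotient $\pi\colon X_B\to X_B/\!/\Ga$: indeed $R\lt\B(V)$ forces every invariant to take the same value at $R$ and $\B(V)$, while conversely, if all invariants agree, the unique closed orbit of $\wb{\Ow_R}$ cannot be $\Ow_{\B(V')}$ for any $V'\ncong V$ (again by Proposition \ref{prop:sepinvs}), so it is $\Ow_{\B(V)}$ and $R\in X_V$. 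In particular $X_V$ is Zariski closed, and as the complement of the union of the remaining finitely many (closed) $X_{V'}$ it is also open.

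For connectedness, note that if $R\in X_V$ and $R'\in\wb{\Ow_R}$ then $\wb{\Ow_{R'}}\subseteq\wb{\Ow_R}$, and a closed orbit of minimal dimension in $\wb{\Ow_{R'}}$ is a closed orbit of $\wb{\Ow_R}$, hence equals $\Ow_{\B(V)}$; so $R'\lt\B(V)$ and $\wb{\Ow_R}\subseteq X_V$. Since $\Ga\cong\prod_i\GL(B_i)$ is connected, $\Ow_R$ and hence $\wb{\Ow_R}$ is irreducible, so $\wb{\Ow_R}$ is a connected subset of $X_V$ containing $R$ and $\B(V)$; thus every point of $X_V$ lies in the component of $\B(V)$ in $X_V$, and $X_V$ is connected. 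Putting this together, $X_B$ is the disjoint union of the finitely many nonempty, connected, clopen, $\Ga$-stable sets $X_V$, which are therefore exactly its connected components. The only genuinely substantive point is the identification of $X_V$ — defined purely through the degeneration relation $\lt$ — with a fibre of the invariant-theory quotient, i.e.\ the proof that $X_V$ is Zariski closed; everything else is bookkeeping with Theorem \ref{thm:main1} and the two propositions recalled in Section \ref{sec:preliminaries}, together with the use of semisimplicity of $\C$ to get finiteness of the index set.
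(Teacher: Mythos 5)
Your proof is correct and follows essentially the same route as the paper: both rest on the facts that invariant functions are constant on orbit closures and that disjoint closed $\Ga$-stable sets are separated by invariants (Proposition \ref{prop:sepinvs}), so that $X_V$ is cut out by invariants (your GIT-fibre description is just a repackaging of the paper's isomorphism $K[X_B]^{\Ga}\cong K^m$ and its idempotents), and both derive connectedness from the fact that every orbit closure in $X_V$ is connected and contains the single closed orbit $\Ow_{\B(V)}$. The extra details you supply (finiteness of the index set via subobjects of $B$, the covering $X_B=\bigcup_V X_V$, and pairwise disjointness) are points the paper leaves implicit but uses, so nothing is missing.
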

\begin{proof}
The fact that $X_V$ is stable under $\Ga$ is immediate.
Notice that for dimension considerations the number of objects $V$ such that $\B(V)\in X_B$ is finite.
We denote these objects by $V_1,V_2,\ldots V_m$.
We claim now that the dimension of the invariant subalgebra $K[X_B]^{\Ga}$ is finite.
Indeed, if $R\lt \B(V)$ then continuity considerations imply that $f(R)=f(\B(V))$ for every $f\in K[X_B]^{\Ga}$.
Consider the following homomorphism of algebras \begin{equation}\phi: K[X_B]^{\Ga}\to K^m\end{equation} $$f\mapsto (f(\B(V_1)),\ldots, f(\B(V_m))),$$ where $K^m$ is an algebra by the operations of pointwise addition and multiplication.  
If $f\in\Ker(\phi)$ then $f$ vanishes on the orbit of every Nichols algebra, and since $f$ is invariant under the action of $\Ga$, the continuity  of $f$ implies that it vanishes on every $\Ga$-orbit, so $f=0$. This implies that $\phi$ is injective. 
Since $\B(V_i)\ncong \B(V_j)$ when $i\neq j$ and $\Ow_{\B(V_i)}$ and $\Ow_{\B(V_j)}$ are closed and disjoint Proposition \ref{prop:sepinvs} implies that for every $i\neq j$ there is a function $f_{ij}\in K[X_B]^{\Ga}$ such that $f_{ij}(\B(V_i))\neq f_{ij}(\B(V_j))$. Since $\phi$ is an algebra map the image of $\phi$ is a unital subalgebra of $K^m$. The only unital subalgebra of $K^m$ that separates the points $\{1,\ldots, m\}$ is $K^m$ itself, and so $\phi$ is surjective and an isomorphism. 
We thus see that all the $X_{V_i}$ are closed, since $X_{V_i}$ is the zero set of the polynomial $1-\phi^{-1}(e_i)$ (where $\{e_i\}$ is the standard basis of $K^m$).

We next claim that $X_{V_i}$ is connected for every $i$. Indeed, assume that $X_{V_i}= Y_1\sqcup Y_2$ with $Y_1$ and $Y_2$ closed and nonempty. 
Take $y_1\in Y_1$. Then $\Ow_{y_1}=\Ga\cdot y_1$ is connected, contained in $X_{V_i}$ and intersects $Y_1$, so $\Ow_{y_1}\subseteq Y_1$. But then $\Ow_{\B(V_i)}\subseteq \overline{\Ow_{y_1}}\subseteq Y_1$. By a similar argument $\Ow_{\B(V_i)}\subseteq Y_2$ and this is a contradiction.
\end{proof}

Due to the last theorem, we can focus our attention on the different subvarieties $X_V$. These subvarieties are stable under the action of $\Ga$. 
The conditions in Theorem \ref{thm:main5} then boil down to the statement that if $\B(V)$ is finite dimensional, then the variety $X_V$ has a single orbit under the action of $\Ga$. 
We finish with a proof of Theorem \ref{thm:main4}:
\begin{proof}[Proof of Theorem \ref{thm:main4}]
Assume that $\B(V)$ is not rigid. In other words, assume that there are non-closed orbits in $X_V$. 
Take a non-closed orbit $\Ow_R$ of minimal dimension in $X_V$. We will prove that such an orbit is the orbit of a pre-Nichols algebra or a post-Nichols algebra.
For this consider the Hopf algebras $R_{gra}$ and $R_{grc}$. If both these Hopf algebras are isomorphic to $R$, then $R\cong R_{gra}\cong R_{grc}$, which implies that $R$ itself is a Nichols algebra by Proposition \ref{prop:gragrc}. This is a contradiction.
If both these algebras are non-isomorphic to $R$, then from the fact that $R_{grc},R_{gra}\in \wb{\Ow_R}$ it follows that the dimensions of the orbits $\Ow_{R_{gra}}$ and $\Ow_{R_{grc}}$ are smaller than the dimension of $\Ow_R$. By the minimality condition on $R$, this implies that $R_{grc}\cong R_{gra}\cong \B(V)$. But by Proposition \ref{prop:gragrc} again, this implies that $R$ itself is a Nichols algebra, which leads again to a contradiction.

We thus see that either $R\cong R_{gra}$ and $R\ncong R_{grc}$, or $R\ncong R_{gra}$ and $R\cong R_{grc}$. Assume first that $R\cong R_{gra}$ and $R\ncong R_{grc}$. 
Since $R\cong R_{gra}$, $R$ has a grading $R= \oplus R_i$ such that the Jacobson radical $J$ satisfy $J^i = \oplus_{j\geq i} R_j$ for every $i\geq 1$. The grading implies that $R_1$, which generate $R$ as an algebra, is a primitive object. But this already implies that $R$ is a pre-Nichols algebra, as it lies between $T(R_1)$ and $\B(R_1)$. Notice that it is impossible that $R_1\cong V$. Indeed, if this was the case then from dimension considerations the fact that $R$ projects onto $\B(R_1)$ would imply that $R\cong \B(R_1) = \B(V)$, contradicting the fact that the orbit of $R$ is not closed. 
By Lemma \ref{lem:subobj} it follows that $R_1$ is isomorphic to a subobject of $V$. We thus see that it must be a proper subobject. 

This shows that if $R\cong R_{gra}$ then $R$ is a pre-Nichols algebra.
If $R\cong R_{grc}$ then by duality of the radical and coradical filtrations we get that $R^*$ is a pre-Nichols algebra. This finishes the proof of Theorem \ref{thm:main4} and also of \ref{thm:main5}
\end{proof}
\end{section}

\begin{section}{Different notions of rigidity}\label{sec:rigidity}
In this paper we call a ccc Hopf algebra $R$ rigid if any Hopf algebra that specializes to it is isomorphic to it. 
There are other notions of rigidity, using deformations by a one-parameter family. We will explain here the relations between them. 

In \cite{AKM} and \cite{DCY} a deformation of a graded bialgebra (not necessarily a finite dimensional one) $B=\oplus_{i\geq 0} B_{(i)}$ by a parameter $\la$ is defined as a pair $(m_{\la},\Delta_{\la})$ such that \begin{equation}m_{\la} = \sum_{i=0}^{\infty} m_{(i)}\la^i\text{ and }\Delta_{\la} = \sum_{i=0}^{\infty} \Delta_{(i)}\la^i\end{equation} where $m_{(i)},\Delta_{(i)}$ are maps of degree $-i$, $m_{(0)}=m$ and $\Delta_{(0)}=\Delta$, and $(m_{\la},\Delta_{\la},u,\epsilon)$ defines a bialgebra structure on $B$ for every $\la$. (Bialgebra deformations of Hopf algebras are automatically Hopf algebras as well. Due to the uniqueness of the antipode, we do not need to consider it as part of the deformation data). 
It is shown that this is the same as a filtered Hopf algebra $U$ that satisfies $grU\cong B$. In \cite{AKM} and \cite{DCY} a graded Hopf algebra $B$ is called rigid if it has no non-trivial deformations. We will call it here deformation rigid. 
We claim the following:
\begin{lemma}
A finite dimensional Nichols algebra $\B(V)$ is rigid with respect to Definition \ref{def:rigidity} if and only if both $\B(V)$ and $\B(V^*)$ are deformation rigid.
\end{lemma}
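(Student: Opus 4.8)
The plan is to connect the geometric notion of rigidity (Definition \ref{def:rigidity}) with the deformation-theoretic one via the correspondence, already established in Section \ref{sec:filtrations}, between one-parameter degenerations $\lim_{\lambda\to 0}\phi(\lambda)(R)$ and Hopf algebra filtrations, combined with the Hilbert--Mumford criterion. The key observation is that a nontrivial deformation of a graded Hopf algebra $B$ in the sense of \cite{AKM,DCY} is exactly the same data as a filtered Hopf algebra $U$ with $\mathrm{gr}\,U\cong B$, and such a $U$ gives a point in $X_B$ with $U\lt B$; conversely, by the remark following the Hilbert--Mumford discussion, any $R'\in X_B$ with $B\in\overline{\Ow_{R'}}$ arises (up to isomorphism) from a one-parameter subgroup, hence from a Hopf algebra filtration on $R'$ whose associated graded is $B$. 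So one should first translate ``$\B(V)$ is deformation rigid'' into ``$\B(V)$ is not the associated graded of any filtration on a non-isomorphic Hopf algebra in $X_{\B(V)}$''.

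The forward direction: suppose $\B(V)$ is rigid in the sense of Definition \ref{def:rigidity}. If $\B(V)$ had a nontrivial deformation, that deformation is a filtered Hopf algebra $U$, finite dimensional (since $\B(V)$ is) and with $\mathrm{gr}\,U\cong\B(V)$ as objects of $\C$; thus $U\in X_{\B(V)}$, and by the filtration-to-degeneration lemma $\B(V)=U_{gr}\in\overline{\Ow_U}$, i.e.\ $U\lt\B(V)$. Rigidity forces $U\cong\B(V)$, so the deformation is trivial; hence $\B(V)$ is deformation rigid. The same argument applied in $X_{\B(V^*)}$, using that $\B(V)^*\cong\B(V^*)$ by Lemma \ref{lem:nicholsdual} and that $R\lt R'$ iff $R^*\lt (R')^*$ by Remark \ref{rmk:duality}, together with Theorem \ref{thm:main3} (which gives that $\B(V)$ is rigid, hence so is $\B(V^*)$), shows $\B(V^*)$ is deformation rigid as well.

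The converse is the substantive direction, and is essentially the content of the proof of Theorem \ref{thm:main4}. Assume $\B(V)$ is not rigid in the sense of Definition \ref{def:rigidity}; then there is $R\in X_{\B(V)}$, $R\not\cong\B(V)$, with $\B(V)\in\overline{\Ow_R}$. Choose such an $R$ whose orbit has minimal dimension among non-closed orbits in the component $X_V$; as in the proof of Theorem \ref{thm:main4} one then shows that either $R\cong R_{gra}$ with $R\not\cong R_{grc}$, or $R\cong R_{grc}$ with $R\not\cong R_{gra}$. In the first case $R$ is graded with $R\cong R_{gra}$, so it is a finite dimensional pre-Nichols algebra $R$ with $\Pp(R)=V'\subsetneq V$; then the one-parameter subgroup $\phi$ attached to the coradical filtration of $R$ degenerates $R$ to $R_{grc}\cong\B(V)$ (using minimality), exhibiting $\B(V)$ as a nontrivial graded limit of the graded algebra $R$, and reading this off as a $\lambda$-family of coproducts on the fixed object $\B(V)$ gives a nontrivial deformation of $\B(V)$ — so $\B(V)$ is not deformation rigid. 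In the second case the dual picture applies: $R^*$ is a finite dimensional pre-Nichols algebra degenerating to $\B(V)^*\cong\B(V^*)$, so $\B(V^*)$ is not deformation rigid. Either way one of $\B(V),\B(V^*)$ fails to be deformation rigid, which is the contrapositive of the claim.

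The main obstacle is the bookkeeping in the converse: one must check carefully that a one-parameter degeneration $\lim_{\lambda\to 0}\phi(\lambda)(R)$ between two \emph{graded} algebras, where $R$ is already graded, really does produce a deformation in the precise sense of \cite{AKM,DCY} — i.e.\ that after fixing the object $\B(V)$ and trivializing the limit, the resulting family $(m_\lambda,\Delta_\lambda)$ has $m_{(i)},\Delta_{(i)}$ of degree $-i$, with $m_{(0)},\Delta_{(0)}$ the Nichols algebra structure, and is nontrivial precisely because $R\not\cong\B(V)$. This amounts to matching the grading on $R$ with the filtration degrees coming from $\phi$ and checking the homogeneity degrees of the pieces $m_{i,j}^{k}$, $\Delta_k^{i,j}$ that survive; it is exactly the computation done in the proof of the filtration lemma in Section \ref{sec:filtrations}, now run in reverse and refined to track the internal grading. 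Once that identification is in place, both directions are immediate from the results already proved.
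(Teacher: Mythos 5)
Your overall strategy coincides with the paper's: identify deformations in the sense of \cite{AKM,DCY} with filtered Hopf algebras whose associated graded is $\B(V)$, use the filtration-to-degeneration lemma of Section \ref{sec:filtrations} in one direction, and Theorem \ref{thm:main4}/\ref{thm:main5} in the other. Your backward direction is sound: nontriviality of the resulting deformation follows simply from $R\ncong\B(V)$, so no filtration bookkeeping is actually needed there (the paper uses the filtration $R_{(n)}=\sum_{i\leq n}\Pp(R)^i$ on the pre-Nichols algebra where you use its coradical filtration; both give a filtered Hopf algebra with associated graded $\B(V)$, hence a nontrivial deformation).

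The genuine gap is in the forward direction, at the sentence ``Rigidity forces $U\cong\B(V)$, so the deformation is trivial.'' Triviality of a deformation in the sense of Section 2.3 of \cite{AKM} is \emph{not} the statement that the filtered Hopf algebra $U$ is abstractly isomorphic to $\B(V)$; it requires an isomorphism compatible with the filtration $B_{(n)}=\bigoplus_{i\leq n}\B(V)_i$ (equivalently, an equivalence of $\lambda$-families). Definition \ref{def:rigidity} only hands you an abstract Hopf algebra isomorphism $\Psi:\B(V)\to U$, and the bulk of the paper's proof of this direction consists precisely of upgrading $\Psi$: one first checks that $\B(V)_1$ remains primitive for the deformed coproduct $\Delta_1$ (the degree $-1$ piece $\Delta_{(1)}:\B(V)_1\to\one\ot\one$ vanishes because $\epsilon(\B(V)_1)=0$), so $\Psi$ preserves degrees $0$ and $1$ and may be normalized to be the identity on $\B(V)_1$; then, since $\B(V)_n$ is the image of $\B(V)_1^{\ot n}$ under $m$ while $m_1(\B(V)_1^{\ot n})\subseteq\bigoplus_{i\leq n}\B(V)_i$, the map $\Psi$ preserves the filtration, which is what triviality demands. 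Without this step the forward implication is not established. A minor further point: Theorem \ref{thm:main3} concerns simple $V$ and plays no role here; the fact that rigidity of $\B(V)$ (in the sense of Definition \ref{def:rigidity}) implies rigidity of $\B(V^*)$ is exactly Remark \ref{rmk:duality} combined with Lemma \ref{lem:nicholsdual}, which you also cite, so that citation should simply be dropped.
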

\begin{proof}
Remark \ref{rmk:duality} implies that $\B(V)$ is rigid if and only if $\B(V^*)$ is rigid (with respect to Definition \ref{def:rigidity}).
To prove the first direction it will thus be enough to show that if $\B(V)$ is rigid with respect to Definition \ref{def:rigidity} then it is deformation rigid.

Assume that $(m_{\la},\Delta_{\la})$ is a one-parameter deformation of $\B(V)$. 
Using the grading $\B(V)=\oplus \B(V)_i$ we have a one-parameter family $\phi:\Gg_m\to \Aut_{\C}(B)$ which sends $\la$ to the automorphism which acts on $\B(V)_i$ by the scalar $\la^{-i}$. We then have that 
\begin{equation}(m_{\la},\Delta_{\la}) = \phi(\la)\cdot (m_1,\Delta_1)\text{ and }(m_0,\Delta_0)=\lim_{\la\to 0}\la\cdot (m_1,\Delta_1).\end{equation}
Since $\B(V)$ is rigid, any algebra specializing to it is isomorphic to it. So we get a Hopf algebra isomorphism $\Psi:\B(V)\cong (B,m_1,\Delta_1,u,\epsilon,S_1)$ where $S_1$ is the uniquely defined antipode. 

Next, we claim that $\B(V)_1$ consists of primitive elements with respect to the comultiplication $\Delta_1$ (and therefore, with respect to $\Delta_{\la}$ for every $\la\in K^{\times}$ as well). Indeed, by grading consideration we have that $\Delta_1|_{\B(V)_1} = \Delta{(0)} + \Delta_{(1)}$, where $\Delta_{(0)}=\Delta$. The map $\Delta_{(1)}:\B(V)_1\to \B(V)_0\ot \B(V)_0=\one\ot\one= \one$ must be zero since otherwise this will contradict the fact that $\epsilon(\B(V)_1)=0$. 
This implies that the isomorphism $\Psi$ will map $\B(V)_0$ to $\B(V)_0$ and $\B(V)_1$ to 
$\B(V)_1$. Without loss of generality we can assume that $\Psi|_{\B(V)_1} = Id_{\B(V)_1}$
(recall that both $\B(V)$ and the deformed algebra have the same underlying object $B$).
Since $\B(V)_n$ is the image of $\B(V)_1^{\ot n}\to \B(V)$ with respect to the iterated multiplication $m$, and since $\bigoplus_{i\leq n}\B(V)_i$ contains the image of $\B(V)_1^{\ot n}\to \B(V)$ under the iterated multiplication $m_1$, we get that $\Psi$ preserves the filtration $B_{(0)}\subseteq \B_{(1)}\subseteq\cdots $ of $B$ given by $\B_{(n)} = \bigoplus_{i\leq n}\B(V)_i$. But this already implies that the deformation $(m_{\la},\Delta_{\la})$ is trivial with respect to the definition in Section 2.3 of \cite{AKM}.

In the other direction, assume that $\B(V)$ and $\B(V^*)$ are deformation rigid. 
By Theorem \ref{thm:main5} we know that if $\B(V)$ is not rigid then there is a pre-Nichols Hopf algebra $R$ such that $R\lt \B(V)$ or $R\lt \B(V^*)$. Since both $\B(V)$ and $\B(V^*)$ are deformation rigid, we can assume without loss of generality that the first case holds.
The algebra $R$ is generated by its primitive elements, and therefore admits a filtration $$R_{(0)}\subseteq R_{(1)}\subseteq\cdots $$ where $R_{(n)} = \sum_{i=0}^n \Pp(R)^i$. This can easily be seen to be a Hopf algebra filtration of $R$. 
The associated graded object is then $\B(V)$ with its usual grading. Since we assumed that $\B(V)$ is deformation rigid we get that $R\cong \B(V)$, which is a contradiction.
\end{proof}

We recall here Theorem 6.2 from \cite{AKM}. This result relies on the classification result from \cite{A1}.
\begin{theorem} Assume that $\B(V)$ is finite dimensional and that the braiding on $V$ is of diagonal type. Then $\B(V)$ is deformation rigid.
\end{theorem}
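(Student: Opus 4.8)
This is Theorem 6.2 of \cite{AKM}, whose proof rests on Angiono's presentation of the finite dimensional Nichols algebras of diagonal type by generators and relations in \cite{A1}; we sketch the strategy. The plan is to work with the filtered incarnation of a deformation. By the equivalence recalled above, a deformation $(m_\la,\Delta_\la)$ of $\B(V)$ is the same datum as a filtered Hopf algebra $U$ with $\mathrm{gr}\,U\cong\B(V)$, and the deformation is trivial precisely when there is a filtered Hopf algebra isomorphism of $U$ onto $\B(V)$ (the latter filtered by its grading). So the goal is to produce such an isomorphism for every filtered $U$ with $\mathrm{gr}\,U\cong\B(V)$. As in the proof of the preceding lemma, one first checks that the first graded piece $V=\B(V)_1$ lifts inside $U$ to a subobject of primitive elements that generates $U$; thus $U$ is a primitively generated filtered quotient of $T(V)$, of the same finite dimension as $\B(V)$.

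Next I would pass to defining relations. By \cite{A1}, $\B(V)=T(V)/I$ with $I$ generated by an explicit homogeneous family $r_1,\dots,r_N$ --- quantum Serre type relations, the power relations $x_\alpha^{N_\alpha}$ on the convex PBW root vectors, and the remaining relations on Angiono's list --- each $r_k$ homogeneous of degree $d_k\ge 2$ and primitive in the pre-Nichols algebra in which it first occurs. In $U$ the defining ideal is then spanned by deformed relations $r_k-c_k$, where $c_k$ is a combination of PBW monomials of degree $<d_k$ whose leading term $r_k$ reproduces $\B(V)$ upon passing to $\mathrm{gr}\,U$. Deformation triviality amounts to the statement that a suitable filtered change of generators $x_i\mapsto x_i+(\text{lower filtration})$ makes every $c_k$ vanish: the requirement that $U$ be a Hopf algebra --- coassociativity of $\Delta_U$ and its compatibility with $m_U$ --- together with finite dimensionality sharply restricts which monomials can occur in each $c_k$, and one then removes the deformation terms in order of increasing degree.

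The step I expect to be the main obstacle is exactly this last bookkeeping: showing the $c_k$ can be gauged away coherently, so that killing the lowest-degree deformation terms does not reintroduce deformation terms in the relations already treated and that the procedure terminates. This is where the combinatorics of the generalized root system and the explicit list of relations from \cite{A1} are indispensable, and it is the reason the argument is confined to the diagonal type; organized cohomologically, the content is essentially the vanishing of the negative internal degree part of the Gerstenhaber--Schack bialgebra cohomology $H^2_b(\B(V),\B(V))$, with the obstructions in $H^3_b$ likewise controlled, which is what \cite{AKM} establish for diagonal type. Finally, the braiding on $V^*$ is again of diagonal type and $\B(V^*)$ is finite dimensional by Lemma \ref{lem:nicholsdual}, so the same conclusion applies to $\B(V^*)$; by the preceding lemma $\B(V)$ is then rigid in the sense of Definition \ref{def:rigidity}, the form in which this result feeds into Theorem \ref{thm:main2}.
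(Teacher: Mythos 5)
The paper offers no proof of this statement at all: it is recalled verbatim as Theorem 6.2 of \cite{AKM}, with only the remark that it relies on the presentation of these Nichols algebras from \cite{A1}. Your proposal correctly identifies the same source and your sketch of the \cite{AKM} argument (graded bialgebra deformations, Angiono's relations, vanishing of the relevant negative-degree bialgebra cohomology) is consistent with that paper, so you are in agreement with --- indeed go beyond --- what this paper provides.
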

This theorem, combined with Theorem \ref{thm:main5} gives a new proof for the generation of pointed Hopf algebras with abelian groups of group-like elements by group-like elements and skew-primitives.
Indeed, the above theorem implies that all the finite dimensional Nichols algebra in $\GGYD$ for $G$ abelian are rigid, and therefore by Theorem \ref{thm:main5} all finite dimensional ccc Hopf algebras in $\GGYD$ are Nichols algebras, and every Hopf algebras $H$ such that $H_0=KG$ with $G$ abelian is generated by group-like elements and skew-primitives.

\end{section}
\begin{section}*{Acknowledgments}
I would like to thank Nicol{\'a}s Andruskiewitsch, Iv{\'a}n Angiono and Istvan Heckenberger for their useful comments.
\end{section}


\begin{thebibliography}{abc}
	\bibitem[And14]{And14} N. Andruskiewitsch, On finite-dimensional Hopf algebras, Proceedings of the ICM Seoul 2014. Vol II. pp 117-141 (2014). 
	\bibitem[AA18]{AA18} N. Andruskiewitsch and I. Angiono, On Nichols algebras over basic Hopf algebras. arXiv:1802.00316.
	\bibitem[AARB17]{AAB} A. Andruskieiwtsch, I. Angiono and F. Rossi Bertone, The quantum divided power algebra of a finite-dimensional Nichols algebra of diagonal type, Math. Res. Lett. 24 (2017), 619-643.
	\bibitem[AGM17]{AGM} N. Andruskiewitsch, C. Galindo and M. M{\"u}ller, Examples of Finite-Dimensional Hopf Algebras with the Dual Chevalley Property, Publicacions Matem{\`a}tiques, Volume 61, Number 2 (2017), 445-474.
	\bibitem[AnGr99]{AnGr} N. Andruskiewitsch and M. Gra{\~n}a, Braided Hopf algebras over non-abelian finite groups, Bol. Acad. Nacional de Ciencias (C{\'o}rdoba) 63 (1999) 45-78.
	\bibitem[AS98]{AS98} N. Andruskiewitsch and H.-J. Schneider, Lifting of Quantum Linear Spaces and Pointed Hopf Algebras of Order $p^3$. Journal of Algebra, Volume 209, Issue 2, 15 November 1998, Pages 658-691.
	\bibitem[AS00]{AS00} N. Andruskiewitsch and H.-J. Schneider, Finite Quantum Groups and Cartan Matrices, Advances in Mathematics 154, (2000) 1-45 .
	\bibitem[AS02]{AS02} N. Andruskiewitsch and H.-J. Schneider, Pointed Hopf algebras, New Directions in Hopf Algebras, 
	MSRI Publications, vol. 43, Cambridge University Press, 2002.
	\bibitem[AS10]{AS10} N. Andruskiewitsch and H.-J. Schneider, On the classification of finite-dimensional pointed Hopf algebras, Annals of Mathematics, Volume 171 (2010) 375-417.
	
	\bibitem[Ang13]{A1} I. Angiono, On Nichols algebras of diagonal type, J. Reine Angew. Math, 683, (2013), 189-251.
	\bibitem[AG19]{AG} I. Angiono and A. Garcia Iglesias, Liftings of Nichols algebras of diagonal type II: all liftings are cocycle deformations, Sel. Math. New Ser. (2019) 25:5
	\bibitem[AKM15]{AKM} A. Angiono, M. Kochetov and M. Mastnak. Rigidity of Nichols algebras, Journal of Pure and Applied Algebra 219, No. 12, (2015) 5539-5559. 
	\bibitem[AnSa19]{AnSa19} I. Angiono and G. Sanmarco, Pointed Hopf algebras over non abelian groups with decomposable braidings, I,  arXiv:1905.04285
	\bibitem[BB12]{BB12} Y. Bazlov and A. Berenstein, Cocycle twists and extensions of braided doubles, Contemporary Mathematics 592 (2013) 19-70
	\bibitem[DCY07]{DCY} Y. Du, X. Chen and Y. Ye, On Graded Bialgebra Deformations, Algebra Colloquium14: 2 (2007) 301-312.
	\bibitem[ENO05]{ENO} P. Etingof, V. Ostrik and D. Nikshych, On fusion categories, Annals of Mathematics,162 (2005), 581-642.
	\bibitem[GS90]{GS} M. Gerstenahber and S. D. Schack, Bialgebra cohomology, deformations, and quantum groups, Proc. Nat. Acad. Sci. USA Vol. 87 (1990), 478-481.
	\bibitem[He09]{Heckenberger} I. Heckenberger, Classification of arithmetic root systems, Advances in Mathematics 220 (2009), 59-124.
	\bibitem[Hu75]{Humphreys} J. E. Humphreys, Linear algebraic groups, Springer Verlag (1975).
	\bibitem[K78]{Kempf} G. R. Kempf, Instability in Invariant Theory, The Annals of Mathematics, 2nd Ser., Vol. 108, No. 2. (Sep., 1978), pp. 299-316. 
	\bibitem[M05]{Makhlouf} A. Makhlouf, Degeneration, rigidity and irreducible components of Hopf algebras, Algebra Colloquium, 
	Vol. 12 n. 2 (2005) 241-254.
	\bibitem[N78]{Newstead} P. E. Newstead, Introduction to moduli problems and orbit spaces. Published for the Tata Institute of Fundamental Research, Bombay, Springer Verlag, 1978
	\bibitem[S69]{Sweedler} M. E. Sweedler, Hopf algebras, W. A. Benjamin (1969). 
	\bibitem[Z99]{Zhang} S. Zhang, Braided Hopf algebras, Hunan Normal University Press, 1999
\end{thebibliography}
\end{document}